\newtoks\prt
\newtheorem{thm}{Theorem}[section]
\newtheorem{lemma}[thm]{Lemma}
\newtheorem{prop}[thm]{Proposition}
\newtheorem{cor}[thm]{Corollary}
\theoremstyle{definition}
\newtheorem{remark}[thm]{Remark}
\newtheorem*{rem}{Remark}
\def\eqn#1$$#2$${\begin{equation}\label#1#2\end{equation}}
 \def\J#1#2#3{ \left\{ #1,#2,#3 \right\} }
\def\U{\mathcal U}
\def\P{\mathcal P}
\def\ce{\mathbb C}
\def\en{\mathbb N}
\def \ran {\operatorname{ran}}
\def \ext {\operatorname{ext}}
\def \reg {\partial _{\kern1pt\text{reg}}}
\def\ip#1#2{\left\langle#1,#2\right\rangle}
\def\di{\,\mbox{\rm d}}
\newcommand{\norm}[1]{\left\|#1\right\|}
\renewcommand{\Re}{\operatorname{Re}}
\newcommand{\abs}[1]{\left|#1\right|}
\newcommand{\setsep}{;\,}
\title[Grothendieck's inequalities]{Grothendieck's inequalities for JB$^*$-triples: Proof of the Barton--Friedman conjecture}
\author[J. Hamhalter]{Jan Hamhalter}
\author[O.F.K. Kalenda]{Ond\v{r}ej F.K. Kalenda}
\author[A.M. Peralta]{Antonio M. Peralta}
\author[H. Pfitzner]{Hermann Pfitzner}
\address{Czech Technical University in Prague, Faculty of Electrical Engineering, Department of Mathematics, Technicka 2, 166 27, Prague 6,
Czech Republic}
\email{hamhalte@math.feld.cvut.cz}
\address{Charles University, Faculty of Mathematics and Physics, Department of
Mathematical Analysis, Sokolovsk{\'a} 86, 186 75 Praha 8, Czech Republic}
\email{kalenda@karlin.mff.cuni.cz}
\address{Departamento de An{\'a}lisis Matem{\'a}tico, Facultad de
Ciencias, Universidad de Gra\-na\-da, 18071 Granada, Spain.}
\email{aperalta@ugr.es}
\address{Universit\'{e} d'Orl\'{e}ans,
BP 6759,
F-45067 Orl\'{e}ans Cedex 2,
France}
\email{pfitzner@univ-orleans.fr}
\thanks{The first two authors were in part supported by the Research Grant GA\v{C}R 17-00941S. The first author was partly supported further by the project OP VVV Center for Advanced Applied Science CZ.02.1.01/0.0/0.0/16\_019/000077.  The third author was partially supported by Junta de Andaluc\'{\i}a grant FQM375.}
\keywords{Grothendieck's inequality, little Grothendieck inequality, JB$^*$-triple, JBW$^*$-triple}
\subjclass[2010]{46L70,  17C65}
\begin{document}

\begin{abstract}
We prove that, given a constant $K> 2$ and a bounded linear operator $T$ from a JB$^*$-triple $E$ into a complex Hilbert space $H$, there exists a norm-one functional $\psi\in E^*$ satisfying  $$\|T(x)\| \leq K \, \|T\| \, \|x\|_{\psi},$$  for all $x\in E$. Applying this result we show that, given $G > 8 (1+2\sqrt{3})$  and a bounded bilinear form $V$ on the Cartesian product of two JB$^*$-triples $E$ and $B$, there exist norm-one functionals $\varphi\in E^{*}$ and $\psi\in B^{*}$ satisfying
$$|V(x,y)| \leq G \ \|V\| \, \|x\|_{\varphi} \, \|y\|_{\psi}$$ for all $(x,y)\in E \times B$. 
These results prove a conjecture pursued during almost twenty years. 
\end{abstract}

\maketitle

\section{Introduction}

In order to review the historical emplacement of a conjecture open for almost twenty years, we should turn back to the fifties, to a major contribution in functional analysis. Grothendieck's inequalities and Grothendieck's constants were named after A. Grothendieck, who established the first result in this direction in his celebrated ``\emph{R{\'e}sum{\'e} de la th{\'e}orie m{\'e}trique des produits tensoriels topologiques}'' (see \cite{grothendieck1956resume}). Grothendieck's original result proves the existence of a universal constant $G>0$ (called Grothendieck's constant), satisfying that for every couple $(\Omega_1,\Omega_2)$ of compact Hausdorff spaces and every bilinear form $V$ on
$C(\Omega_1)\times C(\Omega_2)$ there exist two probability measures $\mu_1$ and $\mu_2$ on $\Omega_1$ and $\Omega_2$, respectively, such that $$\left|V(f,g) \right| \leq G \|V\| \, \left(\int_{\Omega_1} |f(t)|^2 d\mu_1(t) \right)^{\frac12} \, \left(\int_{\Omega_2} |g(s)|^2 d\mu_2(s) \right)^{\frac12}$$
for all $f\in C(\Omega_1)$ and $g\in C(\Omega_2)$. In 1956, Grothendieck predicted the validity of the previous result when the space $C(\Omega),$ of all complex valued continuous functions on a compact Hausdorff space $\Omega$, is replaced with a general C$^*$-algebra (cf. \cite[\S 6, Question 4]{grothendieck1956resume}). Grothendieck's forethought was confirmed several years later. In subsequent remarkable contributions, G. Pisier \cite{pisier1978grothendieck} and U. Haagerup \cite{haagerup1985grothendieck} established the so-called \emph{non-commutative Grothendieck inequality}, which assures that for every bounded bilinear form $V$ on the cartesian product of two C$^*$-algebras $A$ and $B$, there exist two states $\phi$ in $A^*$ and $\psi\in B^*$ satisfying
$$|V(x,y)| \leq 4 \, \|V\|\, \phi \left( \frac{x x^* + x^* x}{2}\right)^{\frac12} \psi \left( \frac{y y^* + y^* y}{2}\right)^{\frac12},$$ for all $(x,y)\in A\times B$. Briefly speaking, at the cost of replacing probability measures with states and moduli of continuous functions with absolute values of the form $|x|^2 = \frac{x x^* + x^* x}{2}$ ($x\in A$), the Grothendieck's inequality works for bounded bilinear forms on the Cartesian product of two C$^*$-algebras. That is, in the non-commutative setting, the pre-Hilbertian semi-norms of the form $\|x\|_{\phi}^2:= \phi \left( \frac{x x^* + x^* x}{2}\right)$, where $\phi$ runs through the set of all states on a C$^*$-algebra $A$, are valid to factor all bounded bilinear forms.\smallskip

There exists a class of complex Banach spaces, called JB$^*$-triples, which are determined by the holomorphic properties of their open unit balls (see Subsection~\ref{S:1.1} below for details). The class of JB$^*$-triples includes (among others) all C$^*$-algebras, and all complex Hilbert spaces. We therefore have a strictly wider class of complex Banach spaces than that determined by all C$^*$-algebras. The setting of JB$^*$-triples seemed an appropriate candidate to extend the Grothendieck's inequality when in 1987 J.T. Barton and Y. Friedman explored this problem.\smallskip 

Although JB$^*$-triples lack an order structure like the one appearing in the setting of C$^*$-algebras, every JB$^*$-triple $E$ admits a large collection of pre-Hilbertian semi-norms which arise naturally from the geometric structure and play a similar role to those determined by the states on a C$^*$-algebra. Barton and Friedman showed in \cite{barton1987grothendieck} that for each norm-one functional $\varphi$ in the dual, $E^*$, of $E$, and each norm-one element $z$ in $E^{**}$ with $\varphi (z) =1$, the mapping $x\mapsto \|x\|_{\varphi}= \varphi\{x,x,z\}^{\frac12}$ defines a pre-Hilbert semi-norm on $E$ which does not depend on the choice of the element $z$. Let us observe that if $\phi$ is a state on a C$^*$-algebra $A$ and $1$ denotes the unit element in $A^{**}$, then $\phi (1)=1$ and  $\|x\|_{\phi}^2 = \phi \{x,x,1\} = \phi \left( \frac{x x^* + x^* x}{2}\right)$ for all $x\in A$. Theorem 1.4 in \cite{barton1987grothendieck} asserts the existence of a universal constant $K\in [2,3+2 \sqrt{2}]$ satisfying the following property: for every bounded bilinear form $V$ on the cartesian product of two JB$^*$-triples $E$ and $F$ there exist norm-one functionals $\varphi \in E^{*}$ and $\psi \in F^{*}$ satisfying \begin{equation}\label{eq BF Grothendieck inequ} |V(x,y)| \leq K \, \|V\| \, \|x\|_{\varphi} \, \|y\|_{\psi},
 \end{equation} for all $(x,y)\in E \times F$.
Building upon the results published in \cite{barton1987grothendieck}, Ch.-H. Chu, B. Iochum and G. Loupias gave an alternative proof of this result in \cite[Theorem 6]{chu1989grothendieck}.\smallskip 

Grothendieck's inequalities were revisited in the setting of real JB$^*$-triples at the beginning of this century, and it was pointed out in \cite{peralta2001little,peralta2001grothendieck} that the proof of \cite[Theorem 1.3]{barton1987grothendieck}
contains a gap affecting also
the arguments and conclusions in \cite{chu1989grothendieck}.
As a consequence of these difficulties, the original statement of Grothendieck's inequality for JB$^*$-triples in \eqref{eq BF Grothendieck inequ} can not be considered as proved, and it is nowadays known as the \emph{Barton--Friedman conjecture}.\smallskip

The main results in \cite{peralta2001little,peralta2001grothendieck,peralta2001grothendieckrev} show that, at the cost of replacing semi-norms of the form $\|.\|_{\varphi}$ and $\|.\|_{\psi}$ with semi-norms of the form $\|.\|_{\varphi_1,\varphi_2}$, $\|.\|_{\psi_1,\psi_2}$ for convenient norm-one functionals ${\varphi_1,\varphi_2}\in E^*$ and $\psi_1,\psi_2\in F^*$, the conclusion in \eqref{eq BF Grothendieck inequ} is true for $K>4(1+2 \sqrt{3})$ (cf. \cite[Theorem 6]{peralta2001grothendieck}). Let us remark that for ${\varphi_1,\varphi_2}\in E^*$ we set $\|x\|_{\varphi_1,\varphi_2}^2 := \|x\|_{\varphi_1}^2 + \|x\|_{\varphi_2}^2$ ($x\in E$). This result was applied to dissipate the concerns affecting subsequent results in JB$^*$-triple theory (for example, properties of the strong$^*$-topology, characterization of weakly compact operators from a JB$^*$-triple into a Banach space, etc.) whose proofs depended on the original form of Grothendieck's inequality by Barton and Friedman. Despite these advances, the Barton--Friedman conjecture (i.e. the statement in \eqref{eq BF Grothendieck inequ}) was neither proven nor discarded.\smallskip

In \cite{peralta2005new} the Barton-Friedman conjecture was proved in some special cases -- for Cartan factors and atomic JBW$^*$-triples (i.e. $\ell_{\infty}$-sums of Cartan factors).\smallskip

In 2012, G. Pisier wrote ``\emph{The problem of extending the non-commutative Gro\-then\-dieck theorem from C$^*$-algebras
to JB$^*$-triples was considered notably by Barton and Friedman around 1987, but seems to be still incomplete}'' (cf. \cite[Remark 8.3]{pisier2012grothendieck}). The recent monograph \cite{Cabrera-Rodriguez-vol2} deals with the Barton--Friedman conjecture under an equivalent reformulation in terms of the \emph{little Grothendieck inequality} (see \cite[Problem 5.10.131]{Cabrera-Rodriguez-vol2}). We refer to section \ref{sec:little} for more details on the little Grothendieck inequality. It is very well illustrated in \cite[pages 337-346]{Cabrera-Rodriguez-vol2} how a proof to the Barton-Friedman conjecture, or equivalently, to the little Grothendieck inequality, might have important consequences and \emph{``restore the validity''} of all subsequent works relying on the original Grothendieck inequality in \eqref{eq BF Grothendieck inequ}.\smallskip

In this paper we fill the gap by proving the Barton-Friedman conjecture. The main result reads as follows

\begin{thm}\label{tt big Grothendieck inequ} Suppose $G >{ 8 (1+2\sqrt{3})}$. Let $E$ and $B$ be JB$^*$-triples. Then for every bounded bilinear form $V:E\times B\to \mathbb{C}$ there exist norm-one functionals $\varphi\in E^{*}$ and $\psi\in B^{*}$ satisfying
$$|V(x,y)| \leq G \ \|V\| \, \|x\|_{\varphi} \, \|y\|_{\psi}$$ for all $(x,y)\in E \times B$.$\hfill\Box$
\end{thm}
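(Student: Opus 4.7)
The plan is to derive Theorem~\ref{tt big Grothendieck inequ} from the single-state \emph{little} Grothendieck inequality announced in the abstract, which, for each bounded operator $T:E\to H$ from a JB$^*$-triple to a Hilbert space and each $K>2$, produces a norm-one $\psi\in E^*$ satisfying $\|T(x)\|\leq K\|T\|\|x\|_\psi$. The overall scheme is the classical one of deducing a bilinear Grothendieck inequality from a linear one, going back to Pisier--Haagerup in the C$^*$-algebra setting and adapted to JB$^*$-triples by the Peirce-decomposition techniques of \cite{peralta2001grothendieck,peralta2005new}. The specific aim is to upgrade the known bilinear inequality of \cite{peralta2001grothendieck}, which used the sum-of-two-states semi-norms $\|\cdot\|_{\varphi_1,\varphi_2}$ and constant $4(1+2\sqrt{3})$, to the single-state form demanded by the Barton--Friedman conjecture.

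First I would pass from $E$ and $B$ to their biduals $E^{**}$ and $B^{**}$, which are JBW$^*$-triples. The form $V$ then extends to a separately weak$^*$-continuous bilinear form $\widetilde V:E^{**}\times B^{**}\to\mathbb{C}$ of the same norm, and any norm-one functional produced on the bidual restricts to a norm-one functional on the original space. Next, to $V$ I would associate the bounded operator $T_V:E\to B^*$ given by $T_V(x)(y)=V(x,y)$, so that $\|T_V\|=\|V\|$. Since $B^*$ is not a Hilbert space, the little Grothendieck inequality does not apply to $T_V$ directly; instead, for each candidate norm-one $\psi\in B^*$, the pre-Hilbert semi-norm $\|\cdot\|_\psi$ gives a Hilbert completion $H_\psi$ of $B$ and an induced factorisation of $T_V$ through a map $E\to H_\psi$, to which the abstract's little inequality applies and yields a $\varphi\in E^*$ controlling the first variable. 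Applying the symmetric argument to $R_V:B\to E^*$ controls the second variable. A minimax / weak$^*$-compactness argument on the (compact) sets of candidate states on $E^{**}$ and $B^{**}$ is then used to produce a single pair $(\varphi,\psi)$ that works uniformly on all of $E\times B$, and a convex-combination step replaces the sum-of-two-states semi-norms by single-state semi-norms in each slot, at the price of a bounded multiplicative loss.

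The main obstacle is the mismatch between the non-Hilbertian target $B^*$ of the associated operator $T_V$ and the Hilbert-space hypothesis of the little Grothendieck inequality. Bridging this gap forces one to work with the pre-Hilbert semi-norms $\|\cdot\|_\psi$ on $B$ and to choose $\psi$ simultaneously for all $x\in E$, which is what the minimax argument achieves; the finite-dimensional reduction plus weak$^*$-compactness of the relevant state sets is the standard device for producing such uniform states. A secondary delicate point is the book-keeping of the constants through Peirce decomposition and finite-dimensional approximation (responsible for the $1+2\sqrt{3}$ factor inherited from \cite{peralta2001grothendieck}) combined with the factor that arises when a sum-of-two-states semi-norm is replaced by a single-state one, which together produce the stated value $G>8(1+2\sqrt{3})$.
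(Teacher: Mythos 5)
Your high-level scheme (pass to biduals, apply the single-state little Grothendieck inequality twice, one application per variable) matches the paper's, but the way you propose to bridge ``the mismatch between the non-Hilbertian target $B^*$ of $T_V$ and the Hilbert-space hypothesis'' is where the argument breaks down. You suggest fixing a candidate $\psi$, forming the completion $H_\psi$ of $B$ under $\|\cdot\|_\psi$, and factoring $T_V$ through a map $E\to H_\psi$. But $T_V$ factors through $H_\psi$ only if $|V(x,y)|\le C\,\|x\|\,\|y\|_\psi$ already holds for all $x,y$ --- which is precisely half of the conclusion you are trying to prove, so the construction is circular. The paper avoids this by invoking the factorization lemma of Chu--Iochum--Loupias (\cite[Lemma 5]{chu1989grothendieck}, refined in the proof of \cite[Theorem 6]{peralta2001grothendieck} so that $S$ is injective and $T$ is weak$^*$-to-weak continuous): the operator $R:M\to N_*$ with $V(a,b)=\langle R(a),b\rangle$ factors as $R=S\circ T$ through an \emph{honest} Hilbert space $H$ with $\|T\|\,\|S\|\le 2(1+2\sqrt{3})\,\|R\|$. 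One then applies the little Grothendieck inequality (with constant $\tilde G>2$) to $T:M\to H$ and to $S^*:N\to H$ separately, and $\tilde G^2\cdot 2(1+2\sqrt3)$ yields exactly the threshold $8(1+2\sqrt3)$. This factorization is the missing key idea; it is also the true source of the $1+2\sqrt3$, which you instead attribute to ``Peirce decomposition and finite-dimensional approximation.''

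Your fallback device --- a ``minimax / weak$^*$-compactness argument on the sets of candidate states'' to produce a uniform pair $(\varphi,\psi)$ --- is not a routine step in this setting and cannot be waved through. In the C$^*$-algebra case such arguments exploit that $\phi\mapsto\phi\bigl(\tfrac{xx^*+x^*x}{2}\bigr)$ is affine and weak$^*$-continuous on the state space; for JB$^*$-triples the quantity $\|x\|_\psi^2=\psi\{x,x,s(\psi)\}$ depends on $\psi$ through its support tripotent $s(\psi)$, so it is neither affine nor weak$^*$-continuous in $\psi$, and the convexity/compactness hypotheses of a minimax theorem fail. (This lack of an affine structure on the ``states'' is one of the central difficulties of the whole subject, and is why the paper's route through the quantitative Hilbert-space factorization is needed.) Finally, your constant bookkeeping is off: replacing each two-state semi-norm of \cite[Theorem 6]{peralta2001grothendieck} by a single-state one via the new estimate costs a factor $2$ in each slot and would only give $G>16(1+2\sqrt3)$, not the stated $G>8(1+2\sqrt3)$; in the paper the two-state-to-one-state reduction happens inside the proof of the little Grothendieck inequality itself, not in the bilinear derivation.
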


This theorem will be proved in Theorem \ref{t big Grothendieck inequ} below. 

The paper is organized as follows. In Subsection~\ref{S:1.1} we provide some background on JB$^*$-triples. Subsection~\ref{s:represent} deals with a representation of JBW$^*$-triples in the form of a suitable direct sum (see Proposition~\ref{P:representation}). 

Section~\ref{sec:little} is devoted to the so-called little Grothendieck inequality. We recall where the gap was and indicate the strategy of our proof.

In the three following sections we prove the little Grothendieck inequality for individual summands from Proposition~\ref{P:representation}
and in the last section we glue the results together and provide proofs of the main results.

Along the paper, all Banach spaces will be over the field of complex numbers, the symbols $S_{X}$ and  $B_{X}$ will stand for the unit sphere and the closed unit ball of  a Banach space $X$, respectively.

\subsection{Basic notions and nomenclature}\label{S:1.1}

The aim of extending the celebrated Riemann mapping theorem to complex Banach spaces of arbitrary dimension led W. Kaup to classify bounded symmetric domains in arbitrary complex Banach spaces (see \cite{kaup1983riemann}). It was proved by L. Harris that the open unit ball of a C$^*$-algebra is a bounded symmetric domain (cf. \cite{harris1974bounded}). It should be recalled that a domain $\mathcal{D}$ in a complex Banach space is symmetric if for each $a$ in $\mathcal{D}$ there is a biholomorphic map $S_a$ of $\mathcal{D}$ onto itself with $S_a=  S_a^{-1}$, such that $a$ is an isolated fixed point of $S_a$. However, the open unit balls of all C$^*$-algebras do not exhaust all examples, namely, infinite dimensional complex Hilbert spaces enjoy the same property, but they are never C$^*$-algebras. The celebrated contribution due to W. Kaup shows that the biholomorphic images of the open unit balls of JB$^*$-triples cover all possible examples of bounded symmetric domains (cf. \cite{kaup1983riemann} or \cite[Theorem 2.5.27]{chubook}).\smallskip

A  \emph{JB$^*$-triple} is a complex Banach space $E$  equipped with a (continuous) triple product $\{.,.,.\}: E^3 \to E$, which is symmetric and bilinear in the outer variables and conjugate-linear in the middle one, and satisfies the following algebraic--analytic axioms
(where given $a,b\in E$, $L(a,b)$ stands for the (linear) operator on $E$ given by $L(a,b)(x)=\J abx$, for all $x\in E$):
\begin{enumerate}[$(JB^*$-$1)$]
\item $L(x,y)L(a,b) = L(L(x,y)(a),b) $ $- L(a,L(y,x)(b))$ $+ L(a,b) L(x,y),$ for all $a,b,x,y\in E$; \hfill \emph{(Jordan identity)}
\item The operator $L(a,a)$ is a hermitian operator with nonnegative spectrum for each $a\in E$;
\item $\|\{a,a,a\}\|=\|a\|^3$ for $a\in E$.
\end{enumerate}

The space $B(H,K)$ of all bounded linear operators between complex Hilbert spaces $H$ and $K$, which is rarely a C$^*$-algebra, is always a JB$^*$-triple when equipped with the triple product defined by $\J xyz =\frac12 (x y^* z +z y^* x).$ The same triple product provides a structure of JB$^*$-triple for every C$^*$-algebra. Moreover, if $H$ is a complex Hilbert space, it can be canonically identified with $B(\ce,H)$, so the above triple triple produce induces a structure of JB$^*$-triple on $H$. 

Moreover, every JB$^*$-algebra $B$ (see, e.g. \cite{Wright1977} or \cite[Section 3.8]{hanche1984jordan}) is a JB$^*$-triple under the triple product defined by $\J xyz = (x\circ y^*) \circ z + (z\circ y^*)\circ x - (x\circ z)\circ y^*$ ($x,y,z\in B$) (see \cite[Lemma 3.1.6]{chubook} or \cite[Theorem 4.1.45]{Cabrera-Rodriguez-vol1}). We recall that a JB$^*$-algebra is a complex Jordan Banach algebra $A$ equipped
with an algebra involution $^*$ satisfying the following three conditions $$\| a\circ b\|\leq \|a\|\,\|b\|,\ \|a^*\| = \|a\|,\hbox{ and }\|\J a{a^*}a \|= \|a\|^3,$$ for all $a,b\in
A$, where $\J a{a^*}a  = 2 (a\circ a^*) \circ a - a^2 \circ a^*$.\smallskip

A formidable result due to Kaup asserts that a linear bijection between JB$^*$-triples is a triple isomorphism if and only if it is an isometry (cf. \cite[Proposition 5.5]{kaup1983riemann}).\smallskip

Given $a,b\in E$ the symbol $Q(a,b)$ will stand for the conjugate-linear operator given by $Q(a,b)(x)=\J axb$. We shall write $Q(a)$ for $Q(a,a)$.\smallskip

An element $e$ in a JB$^*$-triple $E$ is said to be a \emph{tripotent} if $e= \J eee$. Every projection in a C$^*$-algebra $A$ is a tripotent when the latter is regarded as a JB$^*$-triple. Actually, tripotents in $A$ are precisely partial isometries.\smallskip

For each tripotent $e\in E$, the eigenvalues of the mapping $L(e,e)$ are contained in the set $\{0,\frac12,1\}$. Given $i\in\{0,1,2\}$, the linear operator $P_{i} (e) : E\to E$ is defined by $$P_2 (e) = L(e,e)(2 L(e,e) -id_{E})=Q(e)^2,$$ $$ P_1 (e) = 4
L(e,e)(id_{E}-L(e,e)) =2\left(L(e,e)-Q(e)^2\right),$$ $$ \ \hbox{ and } P_0 (e) =
(id_{E}-L(e,e)) (id_{E}-2 L(e,e)).$$ It is known that $P_0(e),$ $P_1(e)$ and $P_2(e)$ are contractive linear projections (see \cite[Corollary 1.2]{Friedman-Russo}), which are called the \emph{Peirce} \emph{projections} associated with $e$. 
Furthermore, the range of $P_i(e)$ is the eigenspace, $E_i(e)$, of $L(e, e)$ corresponding to the eigenvalue $\frac{i}{2},$ and
$$E= E_{2} (e) \oplus E_{1} (e)\oplus E_0 (e)$$ is known as the \emph{Peirce decomposition} of $E$ relative to $e$ (see \cite{Friedman-Russo}, \cite[Definition 1.2.37]{chubook} or \cite[\S 4.2.2]{Cabrera-Rodriguez-vol1} and \cite[\S 5.7]{Cabrera-Rodriguez-vol2} for more details).
If $E$ is a unital C$^*$-algebra and $e\in E$  a tripotent, then $e$ is a partial isometry with initial projection $p_i$ and final projection $p_f$. The Peirce projections are given by the following identities
$$  P_2(e)(x)=p_fxp_i\,, \,  P_1(e)(x)=p_fx(1-p_i)+(1-p_f)xp_i\,,\,   P_0(e)(u) = (1-p_f)x(1-p_i) \,,     $$
where $x$ runs through $E$. 

A tripotent $e$ is called \emph{complete} if $E_0 (e) =\{0\}$.
If $E= E_2(e)$, or equivalently, if $\{e,e,x\}={x}$ for all $x\in E$, we say that $e$ is \emph{unitary}.\smallskip

For each tripotent $e$ in a JB$^*$-triple, $E$, the Peirce-2 subspace $E_2 (e)$ is a unital JB$^*$-algebra with unit $e$,
product $a\circ_{e} b := \{ a,e,b\}$ and involution $a^{*_e} := \J eae$ (cf. \cite[\S 1.2 and Remark 3.2.2]{chubook}).
As we noticed above, every JB$^*$-algebra is a JB$^*$-triple with respect to the product
$$\{a,b,c\}=(a \circ b^*) \circ c +(c \circ b^*) \circ a - (a \circ c) \circ b^*.$$ By Kaup's theorem (see \cite[Proposition 5.5]{kaup1983riemann}) the triple product on $E_2 (e)$ is uniquely determined by the expression
\begin{equation}\label{eq product Peirce2 as JB*-algebra} \{ a,b,c\} =(a \circ_{e} b^{*_e}) \circ_{e} c +(c \circ_{e} b^{*_e}) \circ_e a - (a \circ_e c) \circ b^{*_e},
\end{equation}
for every $a,b,c\in E_2 (e)$. Therefore, unital JB$^*$-algebras are in one-to-one correspondence with JB$^*$-triples
admitting a unitary element.\smallskip

We shall make use of the following natural partial order defined on the set of tripotents in a JB$^*$-triple $E$. Two tripotents $e,v$ in $E$ are called \emph{orthogonal} (denoted by $e\perp v$) if $\{e,e,v\}=0$ ($\Leftrightarrow$ $\{v,v,e\}=0$ $\Leftrightarrow$ $e\in E_0(v)$ $\Leftrightarrow$ $v\in E_0(e)$). Suppose $e,u$ are tripotents in $E$, we say that $e\leq u$ if $u-e$ is a tripotent which is orthogonal to $e$. By \cite[Corollary 1.7]{Friedman-Russo}  $e\leq u$ if and only if any of the equivalent conditions holds:
\begin{enumerate}[$(a)$]
\item $P_2(e)(u)= e$;
\item $\{e,u,e\}=e$;
\item $e$ is a projection {\rm(}i.e. a self-adjoint idempotent{\rm)} in the JB$^*$-algebra $E_2(u)$.
\end{enumerate}

A \emph{JBW$^*$-triple} is a JB$^*$-triple which is also a dual Banach space. In the triple setting, JBW$^*$-triples play the role of von Neumann algebras in the class of C$^*$-algebras. A fundamental result in the theory of JB$^*$-triples proves that every JBW$^*$-triple admits a unique (isometric) predual and its product is separately weak$^*$-to-weak$^*$ continuous (see \cite{BaTi}). 
JBW$^*$-algebras, von Neumann algebras, and complex Hilbert spaces are examples of JBW$^*$-triples for the triple products presented above (cf. \cite[Example 2.5.33 and Lemma 3.1.6]{chubook}). In particular, if $V$ is a von Neumann algebra and $p\in V$ is a projection, then $pV$ is a JBW$^*$-subtriple of $V$ while $pVp$ is a von Neumann subalgebra (with unit $p$). \smallskip

The bidual, $E^{**},$ of every JB$^*$-triple, $E,$ is a JBW$^*$-triple whose triple product extends that on $E$, and thus the inclusion of $E$ into $E^{**}$ is a triple homomorphism (cf. \cite[Corollary 11]{Dineen1986seconddual} or \cite[Theorem 1.4]{BaTi}).\smallskip 

The complete tripotents of a JB$^*$-triple $E$ are precisely the extreme points of its closed unit ball (cf. \cite[Lemma 4.1]{braun1978holomorphic} and \cite[Proposition 3.5]{kaup1977jordan} or
\cite[Theorem 3.2.3]{chubook}).
Therefore every JBW$^*$-triple contains a huge set of complete tripotents.\smallskip

\begin{rem} Let us point out that the notation for Peirce subspaces uses the name of the given triple.
So, if we have a JB$^*$-triple $E$, the Peirce subspaces are denoted by $E_j(e)$ as above; for a JB$^*$-triple $M$ the Peirce subspaces are denoted by $M_j(e)$ etc. 
\end{rem}

The theory of JBW$^*$-triples is deeply indebted with the study on the predual of JBW$^*$-triples developed by F. Friedman and B. Russo in \cite{Friedman-Russo}. Among the many influencing results established in this reference, it is shown that for each non-zero functional $\varphi$ in the predual, $M_*$, of a JBW$^*$-triple $M$, there is a unique tripotent $s(\varphi)\in M$, called the \emph{support tripotent} of $\varphi$, such that $\varphi=\varphi\circ P_2(s(\varphi))$, and $\varphi|_{M_2(s(\varphi))}$ is a faithful positive functional on the JBW$^*$-algebra $M_2(s(\varphi))$ (cf. \cite[Proposition 2]{Friedman-Russo}, or \cite[Proposition 5.10.57]{Cabrera-Rodriguez-vol2}).  We recall that a functional $\varphi$ in the dual space of a JB$^*$-algebra $B$ is called faithful if $\varphi (a) = 0$ for $a\geq 0$ implies $a=0$. We know from 
\cite[part $(b)$ in the proof of Proposition 2]{Friedman-Russo} that \begin{equation}\label{eq order in support tripotents}\hbox{if $u$ is a tripotent in $M$ with $\|\varphi\| = \varphi (u)$, then $u \geq s(\varphi)$.}
\end{equation}

Note that if $M$ is a von Neumann algebra, $\varphi\in M_*$ and $u=s(\varphi)$, then $u$ is a partial isometry and, moreover, the functional
$$u\varphi:x\mapsto \varphi(ux)$$ 
is positive and usually denoted by $\abs{\varphi}$. Then
$\varphi=u^*\abs{\varphi}$ is the polar decomposition of $\varphi$ (see \cite[Theorem III.4.2 and Definition III.4.3]{Tak}).

It is now time to recall the definition of the pre-Hilbert semi-norm appearing in Grothendieck's inequalities, which were introduced by J.T. Barton and Y. Friedman in \cite{barton1987grothendieck}. Suppose $\varphi$ is a functional in the predual of JBW$^*$-triple $M$. By \cite[Proposition 1.2]{barton1987grothendieck} the mapping $M\times M\to \mathbb{C}$, $(x,y)\mapsto \varphi\{x,y,s(\varphi)\}$ is a positive semi-definite sesquilinear form on $M$. In particular, the Cauchy-Schwarz inequality holds. The associated pre-Hilbert semi-norm is denoted by $\norm{x}_{\varphi}:=(\varphi\{x,x,s(\varphi)\})^{1/2}$ ($x\in M$). It is further known that $$\|x\|_
{\varphi}^2= \varphi\{x,x,s(\varphi)\}=\varphi\{x,x,z\},$$ whenever $z$ is an element in $M$ satisfying $\varphi(z)=\norm{\varphi}=\norm{z}=1$. In particular, $\|x\|_
{\varphi}^2= \varphi\{x,x,u\}$ for every tripotent $u\in M$ with $u\geq s(\varphi)$. Moreover, as a consequence of the fact that $\|\{x,y,z\}\|\le\|x\| \|y\| \|z\|$ for all $x,y,z$ in a JB$^*$-triple, we get \begin{equation}\label{eq semi-norm inequality} \|x\|_\varphi\le\sqrt{\|\varphi\|}\|x\|.\end{equation}\smallskip

\subsection{A representation of JBW$^*$-triples}\label{s:represent}

Key tools we use to prove our main results include  structure results of JBW$^*$-triples obtained by G. Horn and E. Neher in  \cite[(1.7)]{horn1987classification}, \cite[(1.20)]{horn1988classification}, and recently revisited in \cite{hamhalter2019mwnc} to decompose every JBW$^*$-triple $M$ in a suitable way. Before formulating the variant we need to give the following easy lemma on
decomposing special JBW$^*$-triples.

\begin{lemma}\label{L:pV decomposition}
Let $V$ be a von Neumann algebra, $p\in V$ a projection and $(z_j)_{j\in J}$ an orthogonal family of projections in the center of $pVp$ with sum equal to $p$.
Then
$$pV=\bigoplus_{j\in J}^{\ell_\infty} z_jV.$$
More precisely, the mapping
$$L:x\mapsto (z_jx)_{j\in J}$$
is an onto isometry witnessing the above equality.
\end{lemma}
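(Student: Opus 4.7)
The plan is to verify in turn that $L$ is well-defined, isometric, and onto. The main algebraic tool I will use is that $z_j\le p$ gives $z_j=pz_j=z_jp$, which, together with centrality of the $z_j$ in $pVp$, yields
\begin{equation*}
z_j\,a\,z_k=z_j(pap)z_k=(pap)z_jz_k=0\qquad\text{for all } a\in V \text{ and } j\ne k.
\end{equation*}

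Well-definedness and the inequality $\|L(x)\|\le\|x\|$ are immediate from $z_jx\in z_jV$ and $\|z_jx\|\le\|x\|$. For the reverse inequality, the key observation is that, for $x\in pV$, one has $xx^*=pxx^*p\in pVp$, so $z_j$ and $xx^*$ commute by centrality. This gives $(z_jx)^*(z_kx)=x^*z_jz_kx=0$ for $j\ne k$, and moreover pairwise orthogonality of the positive operators $(z_jx)^*(z_jx)=x^*z_jx$, since $(x^*z_jx)(x^*z_kx)=x^*(xx^*)z_jz_kx=0$ when $j\ne k$. Hence for every finite $F\subseteq J$,
\begin{equation*}
\Big\|\sum_{j\in F}z_jx\Big\|^2=\Big\|\sum_{j\in F}(z_jx)^*(z_jx)\Big\|=\sup_{j\in F}\|z_jx\|^2.
\end{equation*}
Since $p=\sum_j z_j$ in SOT, each $x\xi=\sum_j z_jx\,\xi$ decomposes into pairwise orthogonal vectors (as $z_jH\perp z_kH$), and $\|x\xi\|^2=\sum_j\|z_jx\xi\|^2\le\sup_j\|z_jx\|^2\,\|\xi\|^2$, giving $\|x\|\le\sup_j\|z_jx\|$.

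For surjectivity, given $(y_j)$ with $M:=\sup_j\|y_j\|<\infty$, I would write $y_j=z_jv_j$ with $v_j\in V$. The key identity then yields $y_jy_k^*=z_jv_jv_k^*z_k=0$ and $y_j^*y_k=v_j^*z_jz_kv_k=0$ for $j\ne k$, and pairwise orthogonality of the positives $y_j^*y_j$ follows as in the isometry step, giving $\|\sum_{j\in F}y_j\|\le M$ for every finite $F$. For each $\xi\in H$, orthogonality of the ranges $y_jH\subseteq z_jH$ produces $\sum_{j\in F}\|y_j\xi\|^2=\|\sum_{j\in F}y_j\xi\|^2\le M^2\|\xi\|^2$, so $x\xi:=\sum_j y_j\xi$ defines a bounded operator $x$ on $H$ with $\|x\|\le M$. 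SOT-closedness of $V$ places $x\in V$, and the identities $z_jx=y_j$ and $px=x$ are then immediate from $z_jy_k=\delta_{jk}y_j$ and $py_j=y_j$, so $x\in pV$ and $L(x)=(y_j)$.

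I do not anticipate a genuine obstacle: the arithmetic is routine once the identity $z_jaz_k=(pap)z_jz_k$ is in hand. The main subtlety is that centrality of the $z_j$ in $pVp$, as opposed to $V$, is precisely what is needed to bring the two $p$'s into that identity, which is why the hypothesis cannot be weakened to mere pairwise orthogonality of the $z_j$ in $V$.
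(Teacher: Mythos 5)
Your proof is correct, but it takes a genuinely different route from the paper's. The paper's argument is short modulo heavy machinery: it verifies that $L$ is a triple homomorphism by computing $\{z_ja,z_jb,z_jc\}=z_j\{a,b,c\}$ for $a,b,c\in pV$ (using the same centrality observation you use, namely that $ab^*,cb^*\in pVp$ commute with $z_j$), then invokes Kaup's theorem that an injective triple homomorphism between JB$^*$-triples is automatically an isometry, and obtains surjectivity from the weak$^*$-to-weak$^*$ continuity of $L$ together with the fact that the range contains all finitely supported tuples. You instead prove everything by hand: the identity $z_jaz_k=(pap)z_jz_k=0$ for $j\ne k$ gives pairwise orthogonality of the pieces $z_jx$ and of the positive elements $x^*z_jx$, whence $\bigl\|\sum_{j\in F}z_jx\bigr\|=\sup_{j\in F}\|z_jx\|$ for finite $F$; the SOT decomposition $x\xi=\sum_j z_jx\xi$ into mutually orthogonal vectors then yields $\|x\|\le\sup_j\|z_jx\|$, and surjectivity is an explicit SOT summation argument resting on the SOT-closedness of $V$ and the orthogonality relations $y_jy_k^*=y_j^*y_k=0$. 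All the computations check out (in particular the step $(x^*z_jx)(x^*z_kx)=x^*(xx^*)z_jz_kx=0$ correctly exploits $xx^*\in pVp$). What each approach buys: the paper's proof is two lines at the cost of citing Kaup's nontrivial isometry theorem and leaving the weak$^*$-continuity check implicit; yours is longer but entirely elementary and self-contained within von Neumann algebra theory, and it isolates precisely why centrality in $pVp$ rather than in $V$ is the right hypothesis. Both are sound.
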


\begin{proof}
The mapping $L$ is clearly a one-to-one linear mapping with $\norm{L}\le 1$.
Moreover, for any $a,b,c\in pV$ and $j\in J$ we have
$$\J {z_ja}{z_jb}{z_jc}=\frac12(z_jab^*z_jc+z_jcb^*z_ja)=\frac12(z_j a b^* c + z_j c b^* a) =z_j \J abc,$$
where in the second equality we used the fact that the elements $ab^*$ and $cb^*$ belong to $pVp$ and hence they commute with $z_j$.\smallskip

It follows that $L$ is a triple homomorphism. Since $L$ is injective, it is an isometry by \cite[Theorem 3.4.1]{chubook}.\smallskip

Finally, it is clear that the range contains all elements with only finitely many nonzero coordinates. Since $L$ is weak$^*$-to-weak$^*$ continuous, it follows that $L$ is onto.
\end{proof}

The promised representation result follows. For definitions and basic results on types of projections in von Neumann algebras we refer to \cite[Chapter V]{Tak}.

\begin{prop}\label{P:representation}
 Let $M$ be any JBW$^*$-triple. Then $M$ is (isometrically) JB$^*$triple isomorphic to a JBW$^*$-triple of the form
$$ \left( \bigoplus_{k\in\Lambda}^{\ell_{\infty}} L^\infty(\mu_k,C_k)\right)\oplus^{\ell_{\infty}} N\oplus^{\ell_{\infty}} p_1 V \oplus^{\ell_{\infty}} p_2V \oplus^{\ell_{\infty}}p_3V,$$
where
\begin{enumerate}[$\bullet$]
    \item $(\mu_k)_{k\in\Lambda}$ is a {\rm(}possibly empty{\rm)} family of probability measures;
    \item Each $C_k$ is a finite dimensional JB$^*$-triple {\rm(}actually a finite dimensional Cartan factor{\rm)} for any $k\in\Lambda$;
    \item $N$ is a JBW$^*$-algebra;
    \item $V$ is a von Neumann algebra, $p_1,p_2,p_3\in V$ are projections such that $p_1$ is  properly infinite, $p_2Vp_2$ is a von Neumann algebra of type $II_1$  and $p_3Vp_3$ is a finite von Neumann algebra of type $I$.
\end{enumerate} 
\end{prop}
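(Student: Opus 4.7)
The plan is to reduce the statement to the Horn--Neher structural classification of JBW$^*$-triples and then reorganize the resulting summands to match the prescribed form.

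First, by the Horn--Neher classification (\cite{horn1987classification}, \cite{horn1988classification}, as revisited in \cite{hamhalter2019mwnc}), every JBW$^*$-triple $M$ is an $\ell_\infty$-sum of a type I part and a continuous part. The type I part is of the form $\bigoplus_{k\in\Lambda'} L^\infty(\mu_k,C_k)$ for Cartan factors $C_k$ and localizable measures $\mu_k$; by splitting the index set and rescaling, one may assume each $\mu_k$ is a probability measure. The continuous part, by the Horn classification of continuous JBW$^*$-triples, further splits as an $\ell_\infty$-sum $N'\oplus^{\ell_\infty} pW$, where $N'$ is a JBW$^*$-algebra (the $\alpha$-Hermitian part of a continuous von Neumann algebra with involution) and $pW$ is a right corner of a continuous von Neumann algebra $W$.

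Second, separate the summands $L^\infty(\mu_k,C_k)$ according to whether $C_k$ is finite- or infinite-dimensional. The finite-dimensional ones are already in the required shape. The infinite-dimensional Cartan factors $B(H,K)$, infinite spin factors, and infinite symmetric/antisymmetric operator factors each admit a standard realization as a right corner $pV$ of a type I von Neumann algebra; combined with an $L^\infty(\mu_k,\cdot)$-coefficient this still gives a corner of a type I von Neumann algebra (tensorizing with $L^\infty(\mu_k)$). Thus after this step, $M$ is an $\ell_\infty$-sum of the desired $\bigoplus_{k\in\Lambda} L^\infty(\mu_k,C_k)$ piece with $C_k$ finite dimensional, a JBW$^*$-algebra $N$ (absorbing $N'$), and finitely many corners $pV$ of von Neumann algebras of various types.

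Third, to each such corner $pV$ apply the type classification to the \emph{unital} von Neumann algebra $pVp$: split its centre into the properly infinite central projection, the type $II_1$ central projection, and the finite type $I$ central projection (see \cite[Chapter V]{Tak}). Lemma~\ref{L:pV decomposition} then gives a corresponding $\ell_\infty$-decomposition of $pV$ into three corners of the prescribed types. Finally, amalgamate the corners of like type coming from different $pV$-pieces into a single ambient von Neumann algebra $V$ (using that $\ell_\infty$-sums of von Neumann algebras remain von Neumann algebras), so that the three projections $p_1,p_2,p_3$ lie inside one common $V$ and realize the three conditions on $p_iVp_i$.

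The main obstacle will be the bookkeeping in the amalgamation step: one must ensure that the central decompositions arising from corners of type I origin (infinite-dimensional Cartan factors) and from the genuinely continuous corner $pW$ can be merged inside a single $V$ without creating type conflicts, and that every JBW$^*$-algebra contribution is routed into the single summand $N$. Apart from this, the argument is a systematic assembly of known structural results.
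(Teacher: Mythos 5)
Your overall architecture matches the paper's: start from a structural decomposition into a type I part, a JBW$^*$-algebra part and a corner $pV$ of a von Neumann algebra, and then refine $pV$ by splitting $p$ into a properly infinite piece and a finite piece, the latter further into its type $I$ and type $II_1$ central summands via Lemma~\ref{L:pV decomposition}. That refinement step is essentially identical to the paper's (which invokes \cite[Proposition 6.3.7]{KR2} and \cite[Theorem V.1.19]{Tak}). The difference is that the paper obtains the first step in one stroke by citing \cite[Proposition 9.2]{hamhalter2019mwnc}, which already delivers the decomposition with \emph{finite-dimensional} Cartan factors $C_k$, a JBW$^*$-algebra $N$ and a single corner $pV$; you instead sketch a re-derivation of that result from Horn--Neher, and this is where your argument has a genuine gap.

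Concretely, your claim that the infinite-dimensional Cartan factors --- ``$B(H,K)$, infinite spin factors, and infinite symmetric/antisymmetric operator factors'' --- ``each admit a standard realization as a right corner $pV$ of a type I von Neumann algebra'' is false for all but the first. An infinite-dimensional spin factor, and likewise the symmetric factor and the antisymmetric factor over an infinite-dimensional Hilbert space, contain a unitary tripotent, hence are unital JBW$^*$-algebras; and a right corner $pV$ possessing a unitary tripotent is triple-isomorphic to a von Neumann algebra, which these factors are not (a spin factor has rank $2$ but infinite dimension, for instance). These summands must therefore be routed into the JBW$^*$-algebra summand $N$, not into the $pV$ part; only the rectangular factors $B(H,K)$ (after identifying $B(H,K)$ with a one-sided corner $qB(H)$ or its transpose) and the genuinely continuous non-unital part belong in $pV$. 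This rerouting, together with the measurable bookkeeping needed to convert $A_k\overline{\otimes}C_k$ with $C_k$ a two-sided corner into a one-sided corner of a single ambient $V$, is precisely the content of \cite[Proposition 9.2]{hamhalter2019mwnc}; as written, your sketch asserts the wrong destination for several classes of summands and leaves the substantive part of that step unproved. The amalgamation of the various corners into one $V$ via an $\ell_\infty$-sum of von Neumann algebras is fine and unproblematic.
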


\begin{proof}
By \cite[Proposition 9.2]{hamhalter2019mwnc}  $M$ is (isometrically) JB$^*$triple isomorphic to a JBW$^*$-triple of the form
\begin{equation*}
\left( \bigoplus_{k\in\Lambda}^{\ell_{\infty}} L^\infty(\mu_k,C_k)\right)\oplus^{\ell_{\infty}} N\oplus^{\ell_{\infty}} pV,
\end{equation*}
where $(\mu_k)_{k\in\Lambda}$, $(C_k)_{k\in\Lambda}$ and $N$ have the properties given in the statement and, moreover, $V$ is a von Neumann algebra and $p\in V$ is a projection.

It remains to refine this decomposition a bit. The summand $p V$ can be decomposed as a direct sum of two summands of the form $p_1 V$ and $p'_2 V$, where $p_1$ is a properly infinite projection and $p'_2$ is a finite projection (cf. \cite[Proposition 6.3.7]{KR2} or \cite[Theorem 10.1]{hamhalter2019mwnc}).

Further, by \cite[Theorem V.1.19]{Tak} there are
orthogononal  central projections $z_1,z_2$ in $p'_2Vp'_2$ with $z_1+z_2=p$ such that $z_1p'_2Vp'_2$ is  of type $I$  and $z_2p'_2Vp'_2$ of type $II_1$.
To complete the proof set $p_2=z_2p'_2$, $p_3=z_1p'_2$ and use Lemma~\ref{L:pV decomposition}.
\end{proof}

\section{Little Grothendieck inequality}\label{sec:little}

The difficulties around Barton-Friedman conjecture are essentially due to a gap in the proof of the so-called \emph{little Grothendieck inequality} stated in \cite[Theorem 1.3]{barton1987grothendieck}. As pointed out in \cite{peralta2001grothendieck} only the following statement was actually proved.

\begin{lemma}\label{lemma little w* norm attaining}{\rm(\cite[Lemma 3]{peralta2001grothendieck}, \cite[Theorem 1.3]{barton1987grothendieck})}
Let $M$ be a complex JBW$^*$-triple, $H$ a complex Hilbert space, and let $T : M\to H$ be a norm-attaining weak$^*$-to-weak continuous linear operator. Then there exists a norm-one normal functional $\varphi\in M_{*}$ satisfying $$\|T(x)\| \leq \sqrt{2}\  \|T\| \  \|x\|_{\varphi},$$ for all $x \in M$. $\hfill\Box$
\end{lemma}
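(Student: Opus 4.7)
The plan is to adapt the Barton--Friedman approach. Normalising to $\|T\|=1$, I would pick $x_0\in B_M$ with $\|T(x_0)\|=1$, set $h_0:=T(x_0)\in S_H$, and define $\varphi\in M_*$ by $\varphi(y):=\langle T(y),h_0\rangle_H$. The weak$^*$-to-weak continuity of $T$ ensures that $\varphi$ is normal, while $\varphi(x_0)=1$ together with $\|\varphi\|\le 1$ give $\|\varphi\|=1$. Let $u:=s(\varphi)$ be the support tripotent. From $\varphi(u)=1=\langle T(u),h_0\rangle$ together with $\|T(u)\|\le 1=\|h_0\|$, the equality case in the Cauchy--Schwarz inequality in $H$ forces $T(u)=h_0$.

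Next I would show that $T$ vanishes on the Peirce $0$-space $M_0(u)$. For $y\in M_0(u)$ with $\|y\|\le 1$, the orthogonality $y\perp u$ gives $\|u+\lambda y\|=\max(\|u\|,|\lambda|\,\|y\|)\le 1$ whenever $|\lambda|\le 1$; expanding $\|h_0+\lambda T(y)\|^2=\|T(u+\lambda y)\|^2\le 1$ and varying $\lambda$ through all moduli and phases one deduces both $\langle h_0,T(y)\rangle=0$ and $\|T(y)\|=0$. Hence $T$ vanishes identically on $M_0(u)$.

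For general $y\in M$ write $y=y_2+y_1+y_0$ with $y_i\in M_i(u)$, so that $T(y)=T(y_2)+T(y_1)$. Using the Peirce arithmetic $\{M_i(u),M_j(u),M_k(u)\}\subseteq M_{i-j+k}(u)$, the orthogonality relation $\{y_0,y_0,u\}=0$, and the fact that $\varphi$ kills $M_0(u)\oplus M_1(u)$, a direct calculation gives
$$\varphi\{y,y,u\}=\varphi\{y_2,y_2,u\}+\varphi\{y_1,y_1,u\},$$
with both summands nonnegative values of the faithful state $\varphi|_{M_2(u)}$ on positive elements of the JBW$^*$-algebra $M_2(u)$. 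The target $\|T(y)\|^2\le 2\varphi\{y,y,u\}$ should then follow from two separate estimates on the Peirce components: bound $\|T(y_2)\|$ in terms of $\varphi\{y_2,y_2,u\}^{1/2}$ by applying Cauchy--Schwarz to the positive sesquilinear form $(a,b)\mapsto\langle T(a),T(b)\rangle_H$ on the JBW$^*$-algebra $M_2(u)$ (exploiting $T(u)=h_0\in S_H$ and the fact that $(a,b)\mapsto\varphi\{a,b,u\}$ is a faithful inner product that agrees with the previous form on all pairs $(a,u)$); and bound $\|T(y_1)\|$ in terms of $\varphi\{y_1,y_1,u\}^{1/2}$ through the ``bimodule'' action of $u$ on $M_1(u)$ via the triple product. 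Combining these with a Cauchy--Schwarz step in $H$ (or a Peirce-orthogonality statement $\langle T(y_2),T(y_1)\rangle_H=0$ obtained by a perturbation argument using the norm attainment at $u$) produces the overall factor $\sqrt{2}$.

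The main obstacle is the estimate on $M_1(u)$: this Peirce subspace carries no intrinsic JB$^*$-algebra structure, so the pre-Hilbertian form $\varphi\{\cdot,\cdot,u\}$ on $M_1(u)$ must be related to $\langle T(\cdot),T(\cdot)\rangle_H$ entirely through the triple product with $u$, using the identity $\|\{y_1,y_1,u\}\|=\|y_1\|^2$ and the fact that $\{y_1,y_1,u\}$ is a positive element of $M_2(u)$. The constant $\sqrt{2}$ is sharp: already in $M=M_2(\mathbb{C})$ with $u=e_{11}$ and $T(a)=a e_1$, the element $y=e_{21}\in M_1(u)$ achieves $\|T(y)\|=\sqrt{2}\,\|y\|_\varphi$, so no refinement of the constant in this norm-attaining case is possible.
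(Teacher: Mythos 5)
The paper does not actually prove this lemma; it is imported verbatim from \cite[Lemma 3]{peralta2001grothendieck} (the salvageable part of \cite[Theorem 1.3]{barton1987grothendieck}), so the comparison is with that argument. Several of your preliminary steps are correct and match the standard proof: the choice $\varphi=T^{*}(T(x_0))$, the identification $T(u)=h_0$ for $u=s(\varphi)$ via equality in Cauchy--Schwarz, the vanishing of $T$ on $M_0(u)$ (using $\|u+\lambda y\|=\max(\|u\|,|\lambda|\|y\|)$ for $y\perp u$), the Peirce identity $\varphi\{y,y,u\}=\varphi\{y_2,y_2,u\}+\varphi\{y_1,y_1,u\}$, and the sharpness example in $M_2(\mathbb C)$.

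However, the heart of the lemma --- the inequality $\|T(y)\|^{2}\le 2\,\varphi\{y,y,u\}$ --- is not proved. For the Peirce-$2$ part, the observation that the forms $(a,b)\mapsto\langle T(a),T(b)\rangle$ and $(a,b)\mapsto\varphi\{a,b,u\}$ agree on all pairs $(a,u)$ only yields $\langle T(a),T(u)\rangle=\varphi(a)$, hence (by Cauchy--Schwarz for the $\varphi$-form) $|\varphi(a)|\le\|a\|_{\varphi}$; two positive sesquilinear forms that agree against the single vector $u$ need not dominate one another, so no bound on $\|T(y_2)\|$ follows from this. For the Peirce-$1$ part you explicitly concede that you do not know how to relate $\|T(y_1)\|$ to $\varphi\{y_1,y_1,u\}$ (as opposed to $\|\{y_1,y_1,u\}\|$, which is a much weaker target), and the claimed orthogonality $\langle T(y_2),T(y_1)\rangle=0$ is likewise only a conjecture. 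The actual proofs in \cite{barton1987grothendieck,peralta2001grothendieck} obtain the key inequality not by splitting into Peirce components but by a second-order variational argument: one perturbs the norm-attaining point along curves inside $B_M$ generated by the complete holomorphic vector fields $z\mapsto a-Q(z)a$ (equivalently, by one-parameter groups of triple automorphisms), expands $\|T(x(t))\|^{2}\le\|T\|^{2}$ to order $t^{2}$, and reads off the quadratic estimate against $\varphi$; this is where the constant $\sqrt2$ and the dependence on the \emph{functional} $\varphi$ (rather than on operator norms of positive elements) genuinely come from. That mechanism is entirely absent from your proposal, so the proof is incomplete at its central step.
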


In \cite{peralta2001grothendieck} it was observed that the assumption of norm-attaining, tacitly used in \cite{barton1987grothendieck}, need not to be satisfied. Via approximating operators by norm-attaining ones the following 
perturbed version of \cite[Theorem 1.3]{barton1987grothendieck}
was proved.

\begin{thm}\label{t little complex w*}\cite[Theorem 3]{peralta2001grothendieck}
Let $K>\sqrt{2}$ and $\varepsilon>0$. Then, for every JBW$^*$-triple $M$, every complex Hilbert space $H$, and
every weak$^*$-to-weak continuous linear operator $T:M \rightarrow
{H}$, there exist norm-one functionals $\varphi_{1},\varphi_{2}
\in M_{*}$ such that the inequality $$\|T(x)\| \leq K \, \|T\|
\, \sqrt{\|x\|_{\varphi_1}^2+\varepsilon\norm{x}_{\varphi_{2}}^2}$$ holds for all $x\in
M$. $\hfill\Box$
\end{thm}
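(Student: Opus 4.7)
My plan is to reduce Theorem~\ref{t little complex w*} to the norm-attaining case of Lemma~\ref{lemma little w* norm attaining} via a perturbation/approximation argument. The slack $K>\sqrt 2$ and the auxiliary semi-norm weighted by $\varepsilon$ serve to absorb the error coming from the non-attainment of $T$. Normalize $\|T\|=1$.

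I would choose an almost-optimizing sequence $(x_n)\subset B_M$ with $\|T(x_n)\|\to 1$, set $h_n=T(x_n)/\|T(x_n)\|\in S_H$, and note that by the weak$^*$-to-weak continuity of $T$ the functionals $\varphi_n(x)=\langle T(x),h_n\rangle$ lie in $M_*$ with $\|\varphi_n\|\le 1$ and $\varphi_n(x_n)=\|T(x_n)\|\to 1$, hence $\|\varphi_n\|\to 1$. These are the natural candidates to play the role that in the proof of Lemma~\ref{lemma little w* norm attaining} is played by the norming functional built from an exact norm-attainer (whose support tripotent governs the resulting pre-Hilbertian semi-norm). By Hahn--Banach select also $\psi_n\in S_{M_*}$ with $\psi_n(x_n)=1$.

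Next I would build a rank-one correction $\widetilde T=T+R$ with $\|R\|\le\mu$ (small) such that $\widetilde T$ genuinely attains its norm; $R$ is constructed from the data $(x_n,h_n,\psi_n)$ above, possibly via an iterative procedure summing countably many rank-one corrections $\mu_k\psi_k\otimes h_k$ with $\sum\mu_k\le\mu$ along a suitably sparse subsequence. Applying Lemma~\ref{lemma little w* norm attaining} to $\widetilde T$ then yields $\varphi_1\in S_{M_*}$ with $\|\widetilde T(x)\|\le\sqrt 2\,(1+\mu)\|x\|_{\varphi_1}$. Let $\varphi_2\in S_{M_*}$ be a weak$^*$-cluster point of $(\psi_n)$ in $B_{M^*}$, using Mazur's theorem on convex combinations to guarantee the limit lies inside $M_*$ (which is not weak$^*$-closed in $M^*$). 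Using the inequality $|\psi_0(x)|\le\|x\|_{\psi_0}$ valid for $\psi_0\in S_{M_*}$ (a consequence of Cauchy--Schwarz applied to the positive sesquilinear form $(x,y)\mapsto\psi_0\{x,y,s(\psi_0)\}$ together with the identity $\psi_0(x)=\psi_0\{x,s(\psi_0),s(\psi_0)\}$) and the weighted binomial estimate $(a+b)^2\le(1+\lambda)a^2+(1+\lambda^{-1})b^2$, the decomposition $\|T(x)\|\le\|\widetilde T(x)\|+\|R\|\cdot|\varphi_2(x)|$ delivers
$$\|T(x)\|^2\le 2(1+\lambda)(1+\mu)^2\|x\|_{\varphi_1}^2+(1+\lambda^{-1})\mu^2\|x\|_{\varphi_2}^2.$$
Since $K>\sqrt 2$, one may choose $\lambda,\mu>0$ so small that $2(1+\lambda)(1+\mu)^2<K^2$ and $(1+\lambda^{-1})\mu^2\le K^2\varepsilon$, yielding the claimed inequality.

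The main obstacle is the construction of a norm-attaining $\widetilde T$ within a prescribed small distance of $T$: a single naive rank-one perturbation does not automatically produce a norm-attaining operator, since it only increases $\|\widetilde T(x_n)\|$ by $\mu$, while $\|\widetilde T\|$ may rise by the same amount, so a gap of order $1-\|T(x_n)\|$ persists. Overcoming this is a Bishop--Phelps--Bollob\'as-type problem for weak$^*$-to-weak continuous operators into Hilbert spaces, and is the delicate technical heart of the argument; it explains both why the correct statement requires the slack $K>\sqrt 2$ (rather than $K=\sqrt 2$) and why the original version of Barton--Friedman could not be directly proved. A plausible alternative, avoiding an explicit construction of $\widetilde T$, would be to rerun the proof of Lemma~\ref{lemma little w* norm attaining} in an ``approximate'' form along the sequence $(x_n)$, carefully tracking the residual terms and showing they can be uniformly dominated by $\varepsilon\,|\psi_n(x)|^2$ before passing to weak$^*$-cluster points.
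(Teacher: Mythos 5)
First, a point of comparison: the paper does not actually prove Theorem~\ref{t little complex w*}; it is imported verbatim from \cite[Theorem 3]{peralta2001grothendieck} (hence the $\Box$ after the statement), so your attempt has to be measured against that cited argument rather than anything internal to this paper. Your skeleton --- perturb $T$ to a norm-attaining weak$^*$-to-weak continuous operator $\widetilde T$, apply Lemma~\ref{lemma little w* norm attaining} to $\widetilde T$, and absorb the perturbation into the $\varepsilon$-weighted second seminorm, with the final weighted-binomial arithmetic in $\lambda,\mu$ --- is indeed the right shape, and the inequality $|\psi_0(x)|\le\|x\|_{\psi_0}$ you invoke is correct. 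But the step you explicitly defer, the construction of a norm-attaining $\widetilde T$ at prescribed small distance from $T$, is not a detail: it is essentially the entire content of the cited proof. Since $x\mapsto\|T(x)\|$ is only weak$^*$-lower semicontinuous on the weak$^*$-compact ball $B_M$, a weak$^*$-cluster point of an almost-maximizing sequence need not be a maximizer, and making a Lindenstrauss-type iteration $T_{n+1}=T_n+\mu_n\,\psi_n\otimes h_n$ converge to a norm-attaining limit while preserving weak$^*$-to-weak continuity requires careful bookkeeping (alignment of $h_n$ with $T_n(x_n)$, norming properties of $\psi_n$ at $x_n$, summability conditions on $\mu_n$). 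As written, your argument assumes the conclusion of that construction.

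There is also an unacknowledged gap at the end. The estimate $\|T(x)\|\le\|\widetilde T(x)\|+\|R\|\,|\varphi_2(x)|$ is only valid if $R$ is rank one with functional part exactly $\varphi_2$; for the countable perturbation $R=\sum_k\mu_k\,\psi_k\otimes h_k$ that your own discussion says is necessary, you must instead dominate $\sum_k\mu_k|\psi_k(x)|$ (or $\bigl(\sum_k\mu_k^2\|x\|_{\psi_k}^2\bigr)^{1/2}$ if the $h_k$ are chosen orthonormal) by $\mu\,\|x\|_{\varphi_2}$ for a \emph{single} norm-one $\varphi_2\in M_*$. That amounts to dominating countably many seminorms $\|\cdot\|_{\psi_k}$ by one seminorm of the same type --- precisely the kind of statement whose difficulty motivates this entire paper (compare Lemma~\ref{l 1.5}, which achieves it only when the support tripotents admit a common majorant). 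Passing to a weak$^*$-cluster point of $(\psi_n)$ does not repair this: such a cluster point need not lie in $M_*$, since the predual is norm-closed but not weak$^*$-closed in $M^*$ and Mazur's theorem concerns the weak, not the weak$^*$, topology; and even when the cluster point does lie in $M_*$, the seminorm $\|\cdot\|_{\varphi_2}$ it generates does not control the individual terms $|\psi_n(x)|$. So both the construction of $\widetilde T$ and the passage from the family $(\psi_k)$ to the single functional $\varphi_2$ in the statement remain to be supplied; your proposal identifies the correct strategy but does not prove the theorem.
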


This version is enough for many structure results on JBW$^*$-triples, but the question whether the perturbation is necessary, remained to be challenging.
We can get rid of the perturbation if we assume that the JBW$^*$-triple $M$ contains a unitary element, or equivalently, when $M$ is a (unital) JBW$^*$-algebra, as witnessed by the following theorem.

\begin{thm}\label{t LG JB*algebras}\cite[Theorem 4]{peralta2001grothendieck}
Let $K>2$ and let $M$ be a JBW$^*$-triple admitting a unitary element $u$.
Then for every complex Hilbert space and every weak$^*$-to-weak continuous
linear operator $T: M \to H$ there exists a norm-one functional
$\varphi\in M_{*}$ such that $s(\varphi)\leq u$ and $$\|T(x)\| \leq K \, \|T\| \,
\|x\|_{\varphi},$$ for all $x\in M$.  $\hfill\Box$
\end{thm}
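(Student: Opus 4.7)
The plan is to upgrade Theorem~\ref{t little complex w*} by exploiting the unital JB$^*$-algebra structure that $M$ inherits from the unitary element $u$. Since $u$ is unitary we have $M=M_2(u)$, so $M$ is a unital JBW$^*$-algebra with unit $u$; by \eqref{eq order in support tripotents} the norm-one positive normal functionals $\varphi\in M_*$ are precisely those satisfying $\varphi(u)=1$, equivalently those with $s(\varphi)\le u$, and for these $\|x\|_\varphi^2=\varphi\{x,x,u\}$. The algebraic miracle of the unital case is that this family is closed under convex combinations and that the induced squared semi-norms combine additively, $\|x\|_{\alpha\varphi+(1-\alpha)\psi}^2=\alpha\|x\|_\varphi^2+(1-\alpha)\|x\|_\psi^2$, so two inequalities can be collapsed into a single one.

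Fix $K>2$. I would pick $K_0>\sqrt 2$ and $\varepsilon>0$ small enough that $K_0\sqrt{2(1+\varepsilon)}<K$, and apply Theorem~\ref{t little complex w*} to obtain $\psi_1,\psi_2\in S_{M_*}$ with
\[\|T(x)\|^2\le K_0^2\,\|T\|^2\bigl(\|x\|_{\psi_1}^2+\varepsilon\|x\|_{\psi_2}^2\bigr)\quad(x\in M).\]
The obstacle now is that the support tripotents $s(\psi_i)$ need not be dominated by $u$, so the $\psi_i$ are not positive on the JBW$^*$-algebra $(M,u)$. The central technical step is a \emph{positivisation}: for each $\psi_i$ I would produce a positive normal functional $\widetilde{\psi}_i\in M_*$ with $s(\widetilde{\psi}_i)\le u$, norm $\|\widetilde{\psi}_i\|\le 2$, and dominating the original semi-norm in the sense that $\|x\|_{\psi_i}^2\le\widetilde{\psi}_i\{x,x,u\}$. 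Concretely, decomposing $\psi_i$ relative to the JB$^*$-algebra $(M,u)$ into its hermitian real and imaginary parts and each of these further into orthogonal positive summands, $\widetilde{\psi}_i$ would be the sum of the four positive pieces; the semi-norm majorisation would then be deduced by a Cauchy--Schwarz argument applied to the sesquilinear form $(x,y)\mapsto\widetilde{\psi}_i\{x,y,u\}$ on $M$.

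Combining the two bounds, the positive normal functional $\Phi_\varepsilon=\widetilde{\psi}_1+\varepsilon\widetilde{\psi}_2$ satisfies $s(\Phi_\varepsilon)\le u$, $\|\Phi_\varepsilon\|\le 2(1+\varepsilon)$, and
\[\|T(x)\|^2\le K_0^2\,\|T\|^2\,\Phi_\varepsilon\{x,x,u\}=K_0^2\,\|\Phi_\varepsilon\|\,\|T\|^2\,\|x\|_{\varphi_\varepsilon}^2,\]
with $\varphi_\varepsilon=\Phi_\varepsilon/\|\Phi_\varepsilon\|$; hence $\|T(x)\|\le K_0\sqrt{2(1+\varepsilon)}\,\|T\|\,\|x\|_{\varphi_\varepsilon}<K\,\|T\|\,\|x\|_{\varphi_\varepsilon}$. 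To remove the dependence on $\varepsilon$, I would take a weak$^*$-cluster point of $(\varphi_\varepsilon)_{\varepsilon>0}$ inside the weak$^*$-compact convex set of norm-one positive normal functionals with $s\le u$, and use weak$^*$-continuity of $\varphi\mapsto\varphi\{x,x,u\}$ for each fixed $x$ (inherited from the separate weak$^*$-continuity of the triple product on $M$) to pass to the limit and extract the required functional $\varphi$ with $s(\varphi)\le u$ and $\|T(x)\|\le K\|T\|\|x\|_\varphi$.

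The main obstacle is the positivisation step: producing $\widetilde{\psi}_i$ with both the norm bound $\|\widetilde{\psi}_i\|\le 2$ and the semi-norm majorisation $\|x\|_{\psi_i}^2\le\widetilde{\psi}_i\{x,x,u\}$ with exactly the constants needed to reach $K>2$ (rather than some larger constant such as $2\sqrt 2$). Any slack in this estimate propagates directly into the final constant, so a sharp Cauchy--Schwarz-type inequality for the sesquilinear form on $(M,u)$ together with a careful use of the Jordan-type decomposition of norm-one functionals on a unital JBW$^*$-algebra is essential.
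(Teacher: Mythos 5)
Your skeleton is the right one, and it matches the standard route to this theorem (which this paper does not reprove but recovers, in generalized form, through Lemma~\ref{l 1.5} and Proposition~\ref{p 3}): apply the perturbed estimate of Theorem~\ref{t little complex w*}, replace each of the two functionals by a positive normal functional supported under $u$, and use that squared semi-norms of positive functionals on the unital algebra $(M,\circ_u)$ combine affinely. The gap is precisely in the step you flag as the main obstacle, and it is not merely unproved: the positivisation you propose is false. Writing $\psi=\psi_r+i\psi_{im}$ and $\widetilde{\psi}=\abs{\psi_r}+\abs{\psi_{im}}$ (the sum of the four positive Jordan pieces), the majorisation $\norm{x}_{\psi}^2\le\widetilde{\psi}\{x,x,u\}$ fails already in $M=M_2(\mathbb C)$, $u=I$. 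Indeed, for $\psi=\tr{\,\cdot\;a}$ with polar decomposition $a=w\abs{a}$ one has $s(\psi)=w^*$ and
$$\norm{x}_\psi^2=\tfrac12\bigl(\tr{xx^*\abs{a}}+\tr{x^*x\abs{a^*}}\bigr),\qquad
\widetilde{\psi}\{x,x,I\}=\tfrac12\tr{(xx^*+x^*x)(\abs{b}+\abs{c})},$$
where $b=\frac12(a+a^*)$, $c=\frac1{2i}(a-a^*)$. Testing on rank-one $x=\xi\eta^*$ decouples the two terms, so the majorisation holds for all $x$ if and only if $\lambda_{\max}\bigl(\abs{a}-\abs{b}-\abs{c}\bigr)+\lambda_{\max}\bigl(\abs{a^*}-\abs{b}-\abs{c}\bigr)\le 0$. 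For $a=\bigl(\begin{smallmatrix}1&1\\0&i\end{smallmatrix}\bigr)$ both matrices $\abs{a}-\abs{b}-\abs{c}$ and $\abs{a^*}-\abs{b}-\abs{c}$ have negative determinant and hence a strictly positive eigenvalue (numerically about $0.13$ each), so the sum is about $0.27>0$ and your inequality fails. The underlying obstruction is that the operator triangle inequality $\abs{b+ic}\le\abs{b}+\abs{c}$ for self-adjoint $b,c$ is false, so no Cauchy--Schwarz argument for the form $(x,y)\mapsto\widetilde{\psi}\{x,y,u\}$ can close this.

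The correct positivisation is built from the \emph{polar} decomposition of $\psi$, not from the Jordan decompositions of its hermitian parts: with $v=s(\psi)$ one takes $\widetilde{\psi}=\psi\circ G$, $G(x)=P_2(v)(x\circ_u v)$, which is exactly Proposition~\ref{p 3} of this paper applied with $p=u$. This $\widetilde{\psi}$ is positive with $s(\widetilde{\psi})\le u$, has the \emph{same} norm as $\psi$, and satisfies $\norm{x}_\psi^2\le 2\norm{x}_{\widetilde{\psi}}^2$; in the von Neumann picture it is $\frac12(\abs{\psi}+\abs{\psi^*})$, and the factor $2$ arises simply from discarding the positive cross terms $\tr{xx^*\abs{a^*}}$ and $\tr{x^*x\abs{a}}$. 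The bookkeeping (norm $1$ with constant $2$ in the squared comparison, versus your intended norm $\le 2$ with constant $1$) yields the same final constant $K>2$, so once this substitution is made the rest of your argument goes through via Lemma~\ref{l 1.5}. Finally, your closing weak$^*$-cluster-point step is superfluous (a single fixed $\varepsilon$ already gives the inequality with constant $K$) and would in any case be delicate, since the normal state space of $M$ need not be weakly compact in $M_*$, so a $\sigma(M^*,M)$-cluster point of normal states need not be normal.
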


We are going to extend this theorem to general JBW$^*$-triples by analyzing the behaviour of the seminorms
$$\norm{\cdot}_{\varphi_1,\varphi_2}:=\sqrt{\norm{\cdot}_{\varphi_1}^2+\norm{\cdot}_{\varphi_2}^2}$$
for a pair of normal functionals which do not necessarily have norm one. More specifically, we are going to prove the following theorem.

\begin{thm}\label{ttt solution to LG problem JBW*-triples} Let $M$ be a JBW$^*$-triple. Then given any two functionals $\varphi_1,\varphi_2$ in $M_*$, there exists a norm-one functional $\psi\in M_*$ such that
$$\norm{x}_{\varphi_1,\varphi_2}\le {\sqrt2\cdot\sqrt{\norm{\varphi_1}+\norm{\varphi_2}}}\cdot \norm{x}_\psi,$$ for all $x\in M.$ Furthermore, given $K> { 2}$, for every complex Hilbert space $H$, and every weak$^*$-to-weak continuous linear operator $T:M \rightarrow {H}$, there exists a norm-one functional $\psi\in M_{*}$ satisfying  $$\|T(x)\| \leq K \, \|T\| \, \|x\|_{\psi}$$  for all $x\in
M$.
\end{thm}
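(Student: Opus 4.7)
I would prove the seminorm inequality (Part 1) first, then deduce the operator inequality (Part 2) from Part 1 combined with the previously established Theorem~\ref{t little complex w*}. Part 1 is handled by reducing, via the structural decomposition of Proposition~\ref{P:representation}, to an analogous inequality on each concrete summand, which is the content of the three sections that follow.

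For Part 1, I apply Proposition~\ref{P:representation} to write $M = \bigoplus_{\alpha}^{\ell_\infty} M_\alpha$. Since $M_* = \bigoplus_{\alpha}^{\ell_1} (M_\alpha)_*$, each $\varphi_i$ decomposes as $\varphi_i = \sum_\alpha \varphi_{i,\alpha}$ with $\norm{\varphi_i} = \sum_\alpha \norm{\varphi_{i,\alpha}}$. Supports of functionals supported on disjoint summands are orthogonal, so $s(\varphi_i) = \sum_\alpha s(\varphi_{i,\alpha})$ is a tripotent in $M$, and the triple product being componentwise yields $\norm{x}_{\varphi_i}^2 = \sum_\alpha \norm{x_\alpha}_{\varphi_{i,\alpha}}^2$ for $x=(x_\alpha)$. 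Granting the analogous two-functional inequality on each summand $M_\alpha$ (to be proved in the later sections with the same constant $\sqrt 2$), pick a norm-one $\psi_\alpha \in (M_\alpha)_*$ with $\norm{x_\alpha}_{\varphi_{1,\alpha}}^2 + \norm{x_\alpha}_{\varphi_{2,\alpha}}^2 \le 2 c_\alpha \norm{x_\alpha}_{\psi_\alpha}^2$, where $c_\alpha := \norm{\varphi_{1,\alpha}}+\norm{\varphi_{2,\alpha}}$. Setting $C := \sum_\alpha c_\alpha = \norm{\varphi_1}+\norm{\varphi_2}$ and $\psi := C^{-1}\sum_\alpha c_\alpha \psi_\alpha$, the series converges absolutely in the $\ell_1$-predual, $\norm{\psi}=1$, and $\norm{x}_\psi^2 = C^{-1}\sum_\alpha c_\alpha \norm{x_\alpha}_{\psi_\alpha}^2$; summing the per-summand inequalities yields $\norm{x}_{\varphi_1}^2 + \norm{x}_{\varphi_2}^2 \le 2C\norm{x}_\psi^2$, as required.

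For Part 2, given $K>2$, I choose $K'>\sqrt{2}$ and $\varepsilon>0$ with $K'\sqrt{2(1+\varepsilon)}\le K$, which is possible because $\sqrt 2\cdot\sqrt 2 = 2$. Theorem~\ref{t little complex w*} produces norm-one $\varphi_1,\varphi_2\in M_*$ with $\norm{T(x)} \le K'\norm{T}\sqrt{\norm{x}_{\varphi_1}^2 + \varepsilon \norm{x}_{\varphi_2}^2}$. Using that $s(\lambda\varphi) = s(\varphi)$ for $\lambda>0$, the rescaled $\tilde\varphi_2 := \varepsilon\varphi_2$ satisfies $\norm{\tilde\varphi_2} = \varepsilon$ and $\norm{x}_{\tilde\varphi_2}^2 = \varepsilon\norm{x}_{\varphi_2}^2$, so the radicand equals $\norm{x}_{\varphi_1,\tilde\varphi_2}^2$; applying Part 1 to the pair $(\varphi_1,\tilde\varphi_2)$ produces a $\psi$ of norm one with $\norm{x}_{\varphi_1,\tilde\varphi_2} \le \sqrt{2(1+\varepsilon)}\norm{x}_\psi$, and combining yields $\norm{T(x)} \le K\norm{T}\norm{x}_\psi$. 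The real obstacle is thus entirely concentrated in the per-summand inequalities postponed to the following three sections: while the positive case on a JBW$^*$-algebra is easy (using $\psi = (\varphi_1+\varphi_2)/\norm{\varphi_1+\varphi_2}$ one even gets constant $1$), non-positive normal functionals have only partial-isometry supports whose polar decompositions do not combine linearly, and it is the detailed bookkeeping on the finite-dimensional, JBW$^*$-algebra, and various von Neumann corner summands $p_iV$ that drives both the constant $\sqrt 2$ and the bulk of the proof.
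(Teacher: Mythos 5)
Your proposal is correct and follows essentially the same route as the paper: Part 1 is exactly the paper's strategy of decomposing $M$ via Proposition~\ref{P:representation}, invoking the per-summand inequalities from the three subsequent sections, and gluing with the convex combination $\psi=C^{-1}\sum_\alpha c_\alpha\psi_\alpha$ (this gluing step is the paper's Proposition~\ref{p stability of the LG under ellinfty sums precise}, which you reprove rather than cite). Part 2 matches the paper's deduction from Theorem~\ref{t little complex w*} via the rescaling $\tilde\varphi_2=\varepsilon\varphi_2$ and the identity $\sqrt{\|x\|_{\varphi_1}^2+\varepsilon\|x\|_{\varphi_2}^2}=\|x\|_{\varphi_1,\varepsilon\varphi_2}$, differing only in the bookkeeping of the constant ($K'$ versus the paper's $\sqrt{K}$).
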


This theorem will be proved in Theorem~\ref{tt solution to LG problem JBW*-triples} below.

Observe that, once we establish the first estimate in this theorem, the second part follows easily from Theorem \ref{t little complex w*} (note that $\sqrt{\|x\|_{\varphi_1}^2+\varepsilon\norm{x}^2_{\varphi_{2}}}=\norm{x}_{\varphi_1,{\varepsilon}\varphi_2}$).

The first estimate will be proved using the representation from Proposition~\ref{P:representation}. We will prove it for individual summands and then we will glue the results together using the following proposition which is a finer version of \cite[Theorem 2.12]{peralta2005new}.

\begin{prop}\label{p stability of the LG under ellinfty sums precise}
Let $\{ M_{\alpha}\}_{\alpha\in \Lambda}$ be a family of JBW$^*$-triples for which there exists a positive constant $G$ satisfying that for every $\alpha\in \Lambda,$ and every couple of  normal functionals $\varphi_{1,\alpha},\varphi_{2,\alpha}\in (M_{\alpha})_{*}$ there exists a norm-one functional $\varphi_{\alpha} \in (M_{\alpha})_{*}$
satisfying $$ \| x \|_{\varphi_{1,\alpha},\varphi_{2,\alpha}} \leq G \sqrt{\norm{\varphi_{1,\alpha}}+\norm{\varphi_{2,\alpha}}}\, \|x\|_{\varphi_{\alpha}},$$ for all $x\in M_{\alpha}$. Let $M = \bigoplus^{\ell_{\infty}}_{\alpha\in
\Lambda} M_{\alpha}$.  Then for every couple of normal functionals $\varphi_{1},\varphi_{2}\in M_{*}$ there exists a norm-one functional $\varphi \in M_{*}$ satisfying $$\|x\|_{\varphi_1,\varphi_2} \leq  G \sqrt{\norm{\varphi_{1,\alpha}}+\norm{\varphi_{2,\alpha}}} \, \|x\|_{\varphi},$$ for all
$x\in M$.
\end{prop}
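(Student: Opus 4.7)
My starting point is the canonical identification $M_*=\bigoplus^{\ell_1}_{\alpha\in\Lambda}(M_\alpha)_*$ that comes with the $\ell_\infty$-sum decomposition of $M$. It lets me write $\varphi_i=\sum_\alpha \varphi_{i,\alpha}$ (for $i=1,2$) with $\varphi_{i,\alpha}\in(M_\alpha)_*$ and $\|\varphi_i\|=\sum_\alpha\|\varphi_{i,\alpha}\|$. The case $\|\varphi_1\|+\|\varphi_2\|=0$ is trivial, so I assume this sum is positive. Applying the hypothesis in each coordinate $\alpha$ to the pair $(\varphi_{1,\alpha},\varphi_{2,\alpha})$ produces a norm-one functional $\varphi_\alpha\in(M_\alpha)_*$ witnessing
$$\|x_\alpha\|_{\varphi_{1,\alpha},\varphi_{2,\alpha}}\le G\sqrt{\|\varphi_{1,\alpha}\|+\|\varphi_{2,\alpha}\|}\,\|x_\alpha\|_{\varphi_\alpha},\qquad x_\alpha\in M_\alpha,$$
(if both $\varphi_{i,\alpha}$ vanish I just pick any norm-one $\varphi_\alpha$, as this coordinate will not contribute). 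The natural candidate for the desired global functional is the convex combination
$$\varphi=\sum_{\alpha\in\Lambda}\lambda_\alpha\varphi_\alpha,\qquad \lambda_\alpha:=\frac{\|\varphi_{1,\alpha}\|+\|\varphi_{2,\alpha}\|}{\|\varphi_1\|+\|\varphi_2\|}.$$
Since $\sum_\alpha\lambda_\alpha=1$ and the functionals $\varphi_\alpha$ live in orthogonal summands of the predual, one has $\|\varphi\|=\sum_\alpha\lambda_\alpha=1$.

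Next I compute the pre-Hilbert seminorm $\|\cdot\|_\varphi$ in closed form. The supports $s(\varphi_\alpha)\in M_\alpha$, viewed inside $M$, form a family of mutually orthogonal tripotents, so their weak$^*$-sum $e=\sum_\alpha s(\varphi_\alpha)$ is a tripotent in $M$ (using separate weak$^*$-continuity of the triple product of the JBW$^*$-triple $M$). Since the triple product respects the $\ell_\infty$-decomposition, $\{x,x,e\}$ has $\alpha$-coordinate equal to $\{x_\alpha,x_\alpha,s(\varphi_\alpha)\}$ for every $x=(x_\alpha)_\alpha\in M$, hence
$$\varphi\{x,x,e\}=\sum_\alpha\lambda_\alpha\,\varphi_\alpha\{x_\alpha,x_\alpha,s(\varphi_\alpha)\}=\sum_\alpha\lambda_\alpha\|x_\alpha\|_{\varphi_\alpha}^2.$$
Moreover $\varphi(e)=\sum_\alpha\lambda_\alpha\varphi_\alpha(s(\varphi_\alpha))=\sum_\alpha\lambda_\alpha=1=\|\varphi\|$ and $\|e\|\le 1$, so by the characterization recalled at the end of Subsection~\ref{S:1.1} the last display equals $\|x\|_\varphi^2$. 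The same orthogonal-support argument applied to $\varphi_i=\sum_\alpha\varphi_{i,\alpha}$ (with support $\sum_\alpha s(\varphi_{i,\alpha})$) gives
$$\|x\|_{\varphi_i}^2=\sum_\alpha\|x_\alpha\|_{\varphi_{i,\alpha}}^2,\qquad \text{so}\qquad \|x\|_{\varphi_1,\varphi_2}^2=\sum_\alpha\|x_\alpha\|_{\varphi_{1,\alpha},\varphi_{2,\alpha}}^2.$$

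Combining everything, the coordinate-wise hypothesis and the choice of weights give
$$\|x\|_{\varphi_1,\varphi_2}^2\le\sum_\alpha G^2(\|\varphi_{1,\alpha}\|+\|\varphi_{2,\alpha}\|)\|x_\alpha\|_{\varphi_\alpha}^2=G^2(\|\varphi_1\|+\|\varphi_2\|)\sum_\alpha\lambda_\alpha\|x_\alpha\|_{\varphi_\alpha}^2,$$
and the last sum is exactly $\|x\|_\varphi^2$. Taking square roots yields the claim.

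The only point requiring care is the formula $\|x\|_\varphi^2=\sum_\alpha\lambda_\alpha\|x_\alpha\|_{\varphi_\alpha}^2$: one has to justify that the weak$^*$-sum $e$ of infinitely many mutually orthogonal support tripotents is itself a tripotent and that it plays the role of a ``norming element'' for $\varphi$ in the sense of the Barton--Friedman description of $\|\cdot\|_\varphi$. This is the step where the $\ell_\infty/\ell_1$ duality, the orthogonality of supports across summands, and separate weak$^*$-continuity of the triple product all combine; once it is in place the rest is a direct computation and a judicious choice of weights.
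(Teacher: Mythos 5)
Your proposal is correct and follows essentially the same route as the paper: restrict $\varphi_1,\varphi_2$ to each summand, apply the hypothesis coordinatewise, and take the convex combination $\varphi=\sum_\alpha\lambda_\alpha\varphi_\alpha$ with the same weights $\lambda_\alpha=(\|\varphi_{1,\alpha}\|+\|\varphi_{2,\alpha}\|)/(\|\varphi_1\|+\|\varphi_2\|)$. Your explicit justification of $\|x\|_\varphi^2=\sum_\alpha\lambda_\alpha\|x_\alpha\|_{\varphi_\alpha}^2$ via the tripotent $e=\sum_\alpha s(\varphi_\alpha)$ is a welcome elaboration of a step the paper leaves implicit, and it is valid.
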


\begin{proof}
Let $\varphi_1,\varphi_2\in M_*$ be given. For $\alpha\in\Lambda$ and $j=1,2$ denote by $\varphi_{j,\alpha}$ the restriction of $\varphi_j$ to $M_\alpha$ (or, more precisely, the composition of $\varphi_j$ with the canonical embedding of $M_\alpha$ into $M$). By the assumption there is
a norm-one functional $\varphi_\alpha\in (M_\alpha)_*$
with
$$ \| x \|_{\varphi_{1,\alpha},\varphi_{2,\alpha}} \leq G \sqrt{\norm{\varphi_{1,\alpha}}+\norm{\varphi_{2,\alpha}}}\, \|x\|_{\varphi_{\alpha}},\mbox{ for }x\in M_\alpha.$$
Further, set
$$c_\alpha =\frac{\norm{\varphi_{1,\alpha}}+\norm{\varphi_{2,\alpha}}}{\norm{\varphi_1}+\norm{\varphi_2}}, \quad \alpha\in\Lambda,$$
and observe that $\sum_{\alpha\in \Lambda}c_\alpha=1$. Thus the functional $\varphi\in M_*$ defined by
$$\varphi((x_\alpha)_{\alpha\in\Lambda})=\sum_{\alpha\in\Lambda}c_\alpha\varphi_\alpha(x_\alpha)
\mbox{ for } x=(x_\alpha)_{\alpha\in\Lambda}\in M,$$
has norm one. Moreover, for each $x\in M$ we have
$$\begin{aligned}
\norm{x}_{\varphi_1,\varphi_2}^2&=
\sum_{\alpha\in\Lambda}\norm{x_\alpha}_{\varphi_{1,\alpha},\varphi_{2,\alpha}}^2 \le \sum_{\alpha\in\Lambda} G^2 (\norm{\varphi_{1,\alpha}}+\norm{\varphi_{2,\alpha}})\norm{x_\alpha}_{\varphi_\alpha}^2 \\&= G^2 (\norm{\varphi_1}+\norm{\varphi_2}) \sum_{\alpha\in\Lambda} c_\alpha \norm{x_\alpha}_{\varphi_\alpha}^2=G^2 (\norm{\varphi_1}+\norm{\varphi_2})\norm{x}_\varphi^2.\end{aligned}$$
\end{proof}

The individual summands will be addressed in the three following sections, in the last section we glue the results together and show that a solution to the Barton--Friedman conjecture follows.

The proof for the summands $N$ and $p_1V$ is given in Corollary~\ref{c LG when Peirce-2 are upward directed} and it is done by a refinement of the proof of Theorem~\ref{t LG JB*algebras} using some ideas from \cite{hamhalter2019mwnc}. The proof for the remaining cases is done by showing that in these cases any seminorm of the form $\norm{\cdot}_{\varphi_1,\varphi_2}$ attains its maximum on $B_M$ and then applying Lemma~\ref{lemma little w* norm attaining}. The last step of this approach is explained in the following lemma.

\begin{lemma}\label{l:double NA -> estimate}
Let $\varphi_1,\varphi_2\in M_*$ be two normal functionals such that the seminorm  $\norm{\cdot}_{\varphi_1,\varphi_2}$ attains its maximum on $B_M$. Then there is a norm-one functional $\psi\in M_*$ such that
$$\|x\|_{\varphi_1,\varphi_2}\le\sqrt{2}\sqrt{\|\varphi_1\|+\|\varphi_2\|}\cdot\|x\|_\psi$$ for all $x\in M$.
\end{lemma}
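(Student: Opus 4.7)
The plan is to reduce the claim directly to Lemma~\ref{lemma little w* norm attaining} by realizing the seminorm $\|\cdot\|_{\varphi_1,\varphi_2}$ as the norm of the image under a suitable operator $T$ into a Hilbert space.

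First, I would assemble the two semi-inner products into a single one. The form
$$[x,y] := \varphi_1\{x,y,s(\varphi_1)\} + \varphi_2\{x,y,s(\varphi_2)\}$$
is positive semi-definite and sesquilinear on $M$, and satisfies $[x,x] = \|x\|_{\varphi_1,\varphi_2}^2$. Let $H$ be the Hilbert space obtained by quotienting $M$ by the kernel of $[\cdot,\cdot]$ and completing, and let $T : M \to H$ be the canonical map, so that $\|Tx\|_H = \|x\|_{\varphi_1,\varphi_2}$ for every $x \in M$. By \eqref{eq semi-norm inequality} we have $\|T\| \le \sqrt{\|\varphi_1\| + \|\varphi_2\|}$, and by hypothesis $\|\cdot\|_{\varphi_1,\varphi_2}$ attains its maximum on $B_M$, so $T$ attains its norm.

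Next, I would verify that $T$ is weak$^*$-to-weak continuous. Since the triple product of a JBW$^*$-triple is separately weak$^*$-continuous and each $\varphi_i$ belongs to $M_*$, for every fixed $y \in M$ the linear functional $x \mapsto [x,y]$ is weak$^*$-continuous on $M$; hence $x \mapsto \langle Tx, Ty\rangle_H$ is weak$^*$-continuous. The range $T(M)$ is dense in $H$ and $T$ is norm-bounded, so a standard approximation argument (equivalently, the observation that $T^*(H) \subseteq M_*$ since $M_*$ is norm-closed in $M^*$) extends this to $x \mapsto \langle Tx,\xi\rangle_H$ for every $\xi \in H$.

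Finally, the hypotheses of Lemma~\ref{lemma little w* norm attaining} are met, so one obtains a norm-one normal functional $\psi \in M_*$ satisfying
$$\|x\|_{\varphi_1,\varphi_2} = \|Tx\|_H \le \sqrt{2}\,\|T\|\,\|x\|_\psi \le \sqrt{2}\,\sqrt{\|\varphi_1\|+\|\varphi_2\|}\,\|x\|_\psi$$
for all $x \in M$, which is the claimed estimate. The only mildly delicate step is the weak$^*$-to-weak continuity of $T$, but this reduces cleanly to separate weak$^*$-continuity of the triple product together with normality of the $\varphi_i$; the rest is essentially bookkeeping, and I would not anticipate any substantial obstacle.
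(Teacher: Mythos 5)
Your proposal is correct and follows essentially the same route as the paper: form the quotient of $M$ by the null space of $\norm{\cdot}_{\varphi_1,\varphi_2}$, complete to a Hilbert space, observe that the canonical map is weak$^*$-to-weak continuous with norm at most $\sqrt{\norm{\varphi_1}+\norm{\varphi_2}}$ and norm-attaining by hypothesis, and apply Lemma~\ref{lemma little w* norm attaining}. Your elaboration of the weak$^*$-to-weak continuity via $T^*(H)\subseteq M_*$ is a correct filling-in of a step the paper leaves implicit.
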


\begin{proof}
 Set $$N_{\varphi_1,\varphi_2}=\{x\in M : \|x\|_{\varphi_1,\varphi_2} =0\}.$$ 
 On the quotient space $M/N_{\varphi_1,\varphi_2}$,
the semi-norm $\|\cdot\|_{\varphi_1,\varphi_2}$ becomes a pre-Hilbert norm.
Let $H_{\varphi_1,\varphi_2}$ be the completion of the so-defined pre-Hilbert space and let $\pi_{_{\varphi_1,\varphi_2}}$ be the natural quotient map viewed as a map from $M$ into $H_{\varphi_1,\varphi_2}$.
The separate weak$^*$-to-weak$^*$ continuity of the triple product and \eqref{eq semi-norm inequality} ensure that $\pi_{_{\varphi_1,\varphi_2}}$ is a weak$^*$-to-weak continuous linear operator with norm at most $\sqrt{\|\varphi_1\|+\|\varphi_2\|}$.
Finally, we may apply  Lemma \ref{lemma little w* norm attaining}  to the operator $T=\pi_{_{\varphi_1,\varphi_2}}$. 
\end{proof}

\section{JBW$^*$-triples in which Peirce-2 subspaces of tripotents are upward directed}

In this section we particularize our study to JBW$^*$-triples satisfying that Peirce-2 subspaces of tripotents are upward directed by inclusion. The idea stems from \cite{hamhalter2019mwnc} where such JBW$^*$-triples were considered in order to have a mild substitute for the lack of an order, see e.g. \cite[Proposition 6.5]{hamhalter2019mwnc}. Let us begin with a series of technical lemmata.

\begin{lemma}\label{l 1.5} Let $\varphi_1,\varphi_2$ be two  functionals in the predual of a JBW$^*$-triple $M$. Suppose there exists a tripotent $p$ in $M$ such that $s(\varphi_1)\le p$ and $s(\varphi_2)\le p$. Then the functional 
 $\psi=\frac{\varphi_1+\varphi_2}{\norm{\varphi_1}+\norm{\varphi_2}}$
satisfies  $\norm{\psi}=1$, $s(\psi)\le p,$ and
$$\norm{x}_{\varphi_1,\varphi_2}=
\sqrt{\norm{\varphi_1}+\norm{\varphi_2}}\cdot\norm{x}_\psi,\quad x\in M.$$
\end{lemma}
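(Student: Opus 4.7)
The plan is to prove the three assertions (norm one, support tripotent below $p$, and the seminorm identity) essentially in one stroke, with the pivotal observation being that $\varphi_i(p)=\|\varphi_i\|$ for $i=1,2$.

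First I would establish this identity. Since $s(\varphi_i)\le p$, the element $p-s(\varphi_i)$ is a tripotent orthogonal to $s(\varphi_i)$ and hence lies in $M_0(s(\varphi_i))$; because $\varphi_i=\varphi_i\circ P_2(s(\varphi_i))$ and $P_2(s(\varphi_i))$ vanishes on $M_0(s(\varphi_i))$, we obtain
$$\varphi_i(p)=\varphi_i(s(\varphi_i))+\varphi_i(p-s(\varphi_i))=\varphi_i(s(\varphi_i))=\|\varphi_i\|.$$
Summing over $i=1,2$ gives $(\varphi_1+\varphi_2)(p)=\|\varphi_1\|+\|\varphi_2\|$, so together with the triangle inequality we get $\|\varphi_1+\varphi_2\|=\|\varphi_1\|+\|\varphi_2\|$. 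Consequently $\|\psi\|=1$ and $\psi(p)=1=\|\psi\|$, and the implication recalled in \eqref{eq order in support tripotents} yields $s(\psi)\le p$.

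Next I would invoke the formula highlighted in the excerpt: $\|x\|_\varphi^2=\varphi\{x,x,u\}$ for any tripotent $u$ with $u\ge s(\varphi)$. Choosing $u=p$, this formula applies simultaneously to $\varphi_1,\varphi_2$, and $\psi$, since all three support tripotents lie below $p$. Therefore
$$\|x\|_\psi^2=\psi\{x,x,p\}=\frac{\varphi_1\{x,x,p\}+\varphi_2\{x,x,p\}}{\|\varphi_1\|+\|\varphi_2\|}=\frac{\|x\|_{\varphi_1}^2+\|x\|_{\varphi_2}^2}{\|\varphi_1\|+\|\varphi_2\|}=\frac{\|x\|_{\varphi_1,\varphi_2}^2}{\|\varphi_1\|+\|\varphi_2\|},$$
and rearranging yields the claimed equality $\|x\|_{\varphi_1,\varphi_2}=\sqrt{\|\varphi_1\|+\|\varphi_2\|}\cdot\|x\|_\psi$.

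I do not foresee any real obstacle here: the entire argument reduces to the evaluation $\varphi_i(p)=\|\varphi_i\|$, which is itself a one-line consequence of the orthogonal decomposition $p=s(\varphi_i)+(p-s(\varphi_i))$ combined with the factorization $\varphi_i=\varphi_i\circ P_2(s(\varphi_i))$. The triple-product representation $\|x\|_\varphi^2=\varphi\{x,x,p\}$ does all the algebraic work, so there is no need to delve into the JB$^*$-algebra structure on $M_2(p)$ or into Peirce arithmetic beyond what is already quoted.
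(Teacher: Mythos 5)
Your proof is correct and follows essentially the same route as the paper's: both hinge on the evaluation $\varphi_i(p)=\|\varphi_i\|$ (which the paper states directly and you justify via the orthogonal decomposition $p=s(\varphi_i)+(p-s(\varphi_i))$), followed by \eqref{eq order in support tripotents} to get $s(\psi)\le p$ and the identity $\|x\|_{\varphi}^2=\varphi\{x,x,u\}$ for tripotents $u\ge s(\varphi)$ applied with $u=p$ to $\varphi_1$, $\varphi_2$ and $\psi$ simultaneously. No gaps; your slightly more explicit justification of $\varphi_i(p)=\|\varphi_i\|$ is a harmless elaboration of a step the paper takes for granted.
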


\begin{proof} Set $e=s(\varphi_2)$ and $u=s(\varphi_1)$. By the assumption we have $u\le p$ and $e\le p$. Further, $\varphi_2(p)=\varphi_2(e)={\norm{\varphi_2}}$ and $\varphi_1(p)=\varphi_1(u)={\norm{\varphi_1}}$, so $\psi(p)=1$. Since clearly $\norm{\psi}\le 1$, we deduce that $\norm{\psi}=\psi(p)=1$ and hence
$s(\psi)\le p$ (cf. \eqref{eq order in support tripotents}).\smallskip

Finally, for $x\in M$ we have
 $$\begin{aligned}
\norm{x}_{\psi}^2&=\psi(\J xx{s(\psi)}) =\psi(\J xxp)
=\frac{\varphi_1(\J xxp)+\varphi_2(\J xxp)}{\norm{\varphi_1}+\norm{\varphi_2}}\\&= \frac{\varphi_1(\J xxu)+\varphi_2(\J xxe)}{\norm{\varphi_1}+\norm{\varphi_2}}=\frac{\norm{x}_{\varphi_1,\varphi_2}^2}{\norm{\varphi_1}+\norm{\varphi_2}}.
\end{aligned}
$$
\end{proof}

In our next proposition we show that the semi-norm given by a normal functional whose support tripotent is contained in the Peirce-2 subspace of another tripotent $p$ in a JBW$^*$-triple $M$ can be bounded by the semi-norm given by a positive functional in the predual of the JBW$^*$-algebra $M_2(p)$.

\begin{prop}\label{p 3} Let $M$ be a JBW$^*$-triple and let $\varphi\in M_*$.
Assume that $p\in M$ is a tripotent such that $s(\varphi)\in M_2(p)$.
Then there exists a  functional $\tilde{\varphi}\in M_*$ such that {$\norm{\tilde{\varphi}}=\norm{\varphi}$}, 
$s(\tilde{\varphi})\le p$ and $\norm{x}_\varphi\le\sqrt{2}\norm{x}_{\tilde{\varphi}}$ for all $x\in M$.
\end{prop}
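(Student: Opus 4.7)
My plan is to take $u:=s(\varphi)$ and define the candidate $\tilde\varphi\in M_*$ by
$$\tilde\varphi(x):=\varphi\{u,p,x\}.$$
First I would verify the norm and support claims. Since $u\in M_2(p)$ we have $\{u,p,p\}=u$, hence $\tilde\varphi(p)=\varphi(u)=\|\varphi\|$. Combined with $\|\tilde\varphi\|\le\|L(u,p)\|\,\|\varphi\|\le\|u\|\|p\|\|\varphi\|=\|\varphi\|$, this forces $\|\tilde\varphi\|=\|\varphi\|$, and \eqref{eq order in support tripotents} then yields $s(\tilde\varphi)\le p$.

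The heart of the proof is the universal triple-product identity
$$2\,\{u,p,\{x,x,p\}\}\;=\;\{x,x,u\}\;+\;\{Q(p)x,Q(p)x,u\},\qquad x\in M,$$
where $Q(p)x:=\{p,x,p\}$. Once this identity is established, applying $\varphi$ to both sides yields $2\|x\|_{\tilde\varphi}^{\,2}=\|x\|_\varphi^{\,2}+\|Q(p)x\|_\varphi^{\,2}$, and since the seminorm term $\|Q(p)x\|_\varphi^{\,2}$ is nonnegative (by the positivity of $\langle\cdot,\cdot\rangle_\varphi$) we obtain $\|x\|_\varphi\le\sqrt{2}\,\|x\|_{\tilde\varphi}$, the desired inequality.

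To prove the identity I would Peirce-decompose $x=x_2+x_1+x_0$ relative to $p$. The Peirce arithmetic (applied to $Q(p)x=\{p,x,p\}\in M_{2-i+2}(p)$) gives $Q(p)x_1=Q(p)x_0=0$ and $Q(p)x_2=x_2^{*_p}$, the JB$^*$-algebra involution on $M_2(p)$. For $x_2\in M_2(p)$, unfolding $\{\cdot,\cdot,\cdot\}$ via \eqref{eq product Peirce2 as JB*-algebra} and using commutativity of $\circ_p$ gives
$$\{x_2,x_2,u\}+\{x_2^{*_p},x_2^{*_p},u\}=2\,(x_2\circ_p x_2^{*_p})\circ_p u=2\{u,p,\{x_2,x_2,p\}\},$$
settling that case. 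For $x_1\in M_1(p)$ the identity reduces to $2\{u,p,\{x_1,x_1,p\}\}=\{x_1,x_1,u\}$; expanding the left-hand side by the Jordan identity
$$\{u,p,\{x_1,x_1,p\}\}=\{\{u,p,x_1\},x_1,p\}-\{x_1,\{p,u,x_1\},p\}+\{x_1,x_1,u\},$$
this is in turn equivalent to the Peirce-1 identity
$$\{x_1,\{p,u,x_1\},p\}-\{\{u,p,x_1\},x_1,p\}=\tfrac12\{x_1,x_1,u\},$$
which I would prove by a further application of the Jordan identity combined with the Peirce multiplication rules $L(M_2,M_2)(M_1)\subseteq M_1$, $L(M_1,M_1)(M_2)\subseteq M_2$. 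The $M_0(p)$-contribution vanishes because $\{M_0,M_0,M_2\}=0$, and the Peirce cross-terms are handled in the same spirit.

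The principal obstacle is the Peirce-1 step: since $M_1(p)$ is not a JB$^*$-subalgebra and carries no natural involution, the direct JB$^*$-algebra manipulation available for $M_2(p)$ cannot be transplanted, and one must instead juggle the operators $L(u,p)$ and $L(p,u)$ on $M_1(p)$ through repeated Jordan-identity arguments. Once the universal identity is proved, the inequality $\|x\|_\varphi\le\sqrt2\,\|x\|_{\tilde\varphi}$ and the properties of $\tilde\varphi$ established above combine to give the proposition.
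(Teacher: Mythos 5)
Your candidate functional $\tilde\varphi(x)=\varphi\{u,p,x\}$ is exactly the one the paper uses (there it appears as $\varphi\circ G|_M$ with $G(x)=P_2(u)(x\circ u)$, which reduces to $\varphi\{x,p,u\}$ because $\varphi=\varphi\circ P_2(u)$ and $1-p\perp u$), and your verification of $\norm{\tilde\varphi}=\norm{\varphi}$ and $s(\tilde\varphi)\le p$ is correct, as is your Peirce-2 computation. The argument collapses, however, at the identity you place at its heart:
$$2\,\{u,p,\{x,x,p\}\}=\{x,x,u\}+\{Q(p)x,Q(p)x,u\}$$
is false. Test it in $M=M_2(\mathbb{C})$ with $p=u=e_{11}$ and $x=e_{21}\in M_1(p)$: then $\{x,x,p\}=\tfrac12 e_{11}$, so the left-hand side equals $e_{11}$, while $\{x,x,u\}=\tfrac12 e_{11}$ and $Q(p)x=px^*p=0$, so the right-hand side equals $\tfrac12 e_{11}$. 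In particular the reduced Peirce-1 identity $2\{u,p,\{x_1,x_1,p\}\}=\{x_1,x_1,u\}$ that you propose to prove by Jordan-identity manipulations is false (in this example the left side is twice the right side), so no such manipulation can establish it. You correctly sensed that the Peirce-1 step is the obstacle, but it is not merely technically awkward --- it is wrong as stated.

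The failure is structural. The correct identity, which the paper imports from the proof of \cite[Lemma 7.7]{hamhalter2019mwnc}, is $P_2(u)(\{x,x,u\}+\{x^*,x^*,u\})=2G(P_2(p)\{x,x,p\})$, where $x\mapsto x^*$ is the involution of a unital JB$^*$-algebra $B$ containing $M$ as a subtriple in which $p$ has been made a projection; producing such a $B$ via \cite{BuFPMaMoPe} is the first step of the paper's proof. On $M_2(p)$ one has $x_2^{*}=Q(p)x_2$, which is why your Peirce-2 case goes through; but on $M_1(p)$ the operator $Q(p)$ vanishes identically while $x_1^*$ does not, and the term you thereby discard, $\varphi(P_2(u)\{x_1^*,x_1^*,u\})$, is exactly the (generally nonzero, but nonnegative) quantity responsible for the factor $\sqrt2$. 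Since $x^*$ need not belong to $M$ and is not expressible through the triple product of $M$ alone, the embedding into $B$ is not an optional convenience but the missing idea: to repair your argument you must either perform that embedding (recovering the paper's proof) or give an independent intrinsic proof of the inequality $2\varphi\{u,p,\{x_1,x_1,p\}\}\ge\varphi\{x_1,x_1,u\}$ for $x_1\in M_1(p)$, which your proposal does not supply.
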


\begin{proof}
We mimic the approach in the proof of \cite[Lemma 7.7]{hamhalter2019mwnc}. By the arguments in the first paragraph in the proof of \cite[Proposition 2.4]{BuFPMaMoPe} (see also \cite[Lemma 3.9]{EdMaFPHoPe}) we can find a unital JB$^*$-algebra $B$ and an isometric triple embedding $\pi$ of $M$ into $B$ such that $\pi(p)$ is a projection in $B$. We can therefore assume that $M$ is a JB$^*$-subtriple of $B$ and $p$ is a projection in $B$. The triple product in $B$ (and in $M$) is uniquely determined by the expression $\{a,b,c\}=(a \circ b^*)\circ c + (c\circ b^*)\circ a - (a\circ c)\circ b^*$ ($a,b,c\in B$).\smallskip

Set $u=s(\varphi)$. Define $G:B\to B$ by $G(x)=P_2(u)(x\circ u)$. Having in mind that $1-p\perp u$ (because $1-p\in M_0(p)$ and $u\in M_2(p)$), and hence for each $x\in M$, we have $P_2(u) (x\circ u) = P_2(u) \{x,1,u\} =P_2(u) \{x,p,u\} \in M$, we deduce that $G$ maps $M$ into $M_2(u)$ and its restriction to $M$ is weak$^*$-to-weak$^*$ continuous.\smallskip

Set $\tilde{\varphi}=\varphi\circ G|_M$. Then $\tilde{\varphi}\in M_*$ and $\norm{\tilde{\varphi}}\le\norm{\varphi}$ (as Peirce projections are contractive and hence clearly $\norm{G}\le 1$).\smallskip

Moreover,
$$\tilde{\varphi}(p)=\varphi(P_2(u)(p\circ u))=\varphi P_2(u) \{p,p,u\}=\varphi(u)=\norm{\varphi},$$
hence $\norm{\tilde{\varphi}}=\norm{\varphi}$ and $s(\tilde{\varphi})\le p$ (see \eqref{eq order in support tripotents}).\smallskip

Finally, it is explicitly shown in the proof of \cite[Lemma 7.7]{hamhalter2019mwnc} that for each $x\in B$ we have $P_2(u)(\J xxu + \J {x^*}{x^*}u)=2 G(P_2(p)\J xxp),$ and hence
$$\begin{aligned}\norm{x}_{\tilde{\varphi}}^2&=\tilde{\varphi}(\J xx{s(\tilde{\varphi})})
=\tilde{\varphi}(\J xxp)=\tilde{\varphi}(P_2(p)\J xxp)=
\varphi(G(P_2(p)\J xxp))
\\&=\frac12\varphi(P_2(u) (\J xxu+\J{x^*}{x^*}u))\ge \frac12\norm{x}_\varphi^2.\end{aligned}$$
Note that in the third equality we used that $\tilde{\varphi}=\tilde{\varphi}\circ P_2(p)$. This follows from the fact that  $s(\tilde{\varphi})\le p$ which implies  $P_2(s(\tilde{\varphi}))=P_2(s(\tilde{\varphi}))P_2(p)$ (for example using \cite[Proposition 6.5]{hamhalter2019mwnc}).

This completes the argument.
\end{proof}

We can next combine Lemma \ref{l 1.5} and Proposition \ref{p 3} to obtain a strengthened conclusion.

\begin{prop}\label{p 4} Let $\varphi_1,\varphi_2$ be two  functionals in the predual of a JBW$^*$-triple $M$.
Assume there exists a tripotent $p\in M$ such that $\{s(\varphi_1),s(\varphi_2)\}\subseteq M_2(p)$.
Then there is a norm-one functional $\psi\in M_*$ such that $s(\psi)\le p$ and
$$\norm{x}_{\varphi_1,\varphi_2}\le {\sqrt2\cdot\sqrt{\norm{\varphi_1}+\norm{\varphi_2}}}\cdot \norm{x}_\psi,$$ for all $x\in M.$ 
\end{prop}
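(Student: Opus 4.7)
The plan is to combine the two preceding results (Proposition~\ref{p 3} and Lemma~\ref{l 1.5}) in the obvious way: first reduce to the case where both support tripotents are dominated by $p$ via Proposition~\ref{p 3}, and then merge the two functionals into one by Lemma~\ref{l 1.5}. The factor $\sqrt{2}$ will come from Proposition~\ref{p 3}, while Lemma~\ref{l 1.5} gives the scaling by $\sqrt{\|\varphi_1\|+\|\varphi_2\|}$ without any extra constant.

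More precisely, I would proceed as follows. For each $i\in\{1,2\}$, the hypothesis $s(\varphi_i)\in M_2(p)$ allows us to apply Proposition~\ref{p 3}, producing functionals $\tilde\varphi_i\in M_*$ with $\|\tilde\varphi_i\|=\|\varphi_i\|$, $s(\tilde\varphi_i)\le p$, and $\|x\|_{\varphi_i}\le\sqrt2\,\|x\|_{\tilde\varphi_i}$ for every $x\in M$. Squaring and adding we get
\begin{equation*}
\|x\|_{\varphi_1,\varphi_2}^{2}=\|x\|_{\varphi_1}^{2}+\|x\|_{\varphi_2}^{2}\le 2\bigl(\|x\|_{\tilde\varphi_1}^{2}+\|x\|_{\tilde\varphi_2}^{2}\bigr)=2\,\|x\|_{\tilde\varphi_1,\tilde\varphi_2}^{2}.
\end{equation*}
Now both $\tilde\varphi_1$ and $\tilde\varphi_2$ have their support tripotents beneath the common tripotent $p$, which puts us exactly in the setting of Lemma~\ref{l 1.5}. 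Applying that lemma yields a norm-one functional
\begin{equation*}
\psi=\frac{\tilde\varphi_1+\tilde\varphi_2}{\|\tilde\varphi_1\|+\|\tilde\varphi_2\|}\in M_*
\end{equation*}
with $s(\psi)\le p$ and $\|x\|_{\tilde\varphi_1,\tilde\varphi_2}=\sqrt{\|\tilde\varphi_1\|+\|\tilde\varphi_2\|}\,\|x\|_{\psi}$. Since $\|\tilde\varphi_i\|=\|\varphi_i\|$, chaining the two inequalities gives
\begin{equation*}
\|x\|_{\varphi_1,\varphi_2}\le\sqrt2\,\|x\|_{\tilde\varphi_1,\tilde\varphi_2}=\sqrt{2}\,\sqrt{\|\varphi_1\|+\|\varphi_2\|}\,\|x\|_{\psi},
\end{equation*}
as desired.

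There is no genuine obstacle here; the statement is essentially the packaging of Proposition~\ref{p 3} (pulling both supports down under $p$) with Lemma~\ref{l 1.5} (merging two functionals with a common dominating tripotent into one). The only delicate points are keeping the norms of $\tilde\varphi_i$ equal to those of $\varphi_i$ (which is exactly what Proposition~\ref{p 3} guarantees, so that the factor $\sqrt{\|\varphi_1\|+\|\varphi_2\|}$ appears on the right-hand side) and verifying that the extra factor $\sqrt2$ does not get duplicated by the merging step, which is indeed the case since Lemma~\ref{l 1.5} produces an exact equality rather than an inequality.
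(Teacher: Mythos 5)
Your argument is correct and is essentially identical to the paper's own proof: both apply Proposition~\ref{p 3} to each $\varphi_i$ to obtain $\tilde\varphi_i$ with $\|\tilde\varphi_i\|=\|\varphi_i\|$ and $s(\tilde\varphi_i)\le p$, and then invoke Lemma~\ref{l 1.5} on the pair $(\tilde\varphi_1,\tilde\varphi_2)$ to produce the single norm-one functional $\psi$. Your bookkeeping of the constants (the factor $\sqrt2$ coming only from Proposition~\ref{p 3}, with Lemma~\ref{l 1.5} contributing an exact equality) matches the paper precisely.
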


\begin{proof} Find, via Proposition \ref{p 3}, two  functionals $\tilde{\varphi}_1$ and $\tilde{\varphi_2}$ in $M_*$ such that $\norm{\tilde{\varphi_j}}=\norm{\varphi_j}$, $s(\tilde{\varphi_j})\le p$ and $\norm{x}_{\varphi_j}\le\sqrt{2}\norm{x}_{\tilde{\varphi}_j}$ for all $x\in M$ and $j\in \{1,2\}$. Take $\psi=\frac{\tilde{\varphi}_1+\tilde{\varphi_2}}{\norm{\varphi_1}+\norm{\varphi_2}}\in M_*$. Lemma \ref{l 1.5} implies that $s(\psi)\le p$ and
$$\norm{x}_{\varphi_1,\varphi_2}^2 = \norm{x}_{\varphi_1}^2+ \norm{x}_{\varphi_2}^2 \le 2 \left(\norm{x}_{\tilde{\varphi}_1}^2+ \norm{x}_{\tilde{\varphi}_2}^2 \right) = {2(\norm{\varphi_1}+\norm{\varphi_2})} \norm{x}_\psi^2,$$ for all $x\in M.$\smallskip
\end{proof}

\begin{cor}\label{c LG when Peirce-2 are upward directed} Let $M$ be a JBW$^*$-triple in which Peirce-2 subspaces of tripotents are upward directed by inclusion. Then given any $\varphi_1,\varphi_2$ in $M_*$, there exists a norm-one functional $\psi\in M_*$ such that
$$\norm{x}_{\varphi_1,\varphi_2}\le {\sqrt2 \sqrt{\norm{\varphi_1}+\norm{\varphi_2}}} \norm{x}_\psi,$$ for all $x\in M.$ This holds, in particular, when $M$ is either a JBW$^*$-algebra or a JBW$^*$-triple of the form $pV$, where $V$ is a von Neumann algebra and $p\in V$ is a properly infinite projection.
\end{cor}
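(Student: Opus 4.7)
The plan is to reduce the corollary to an immediate application of Proposition~\ref{p 4}. Given $\varphi_1,\varphi_2\in M_*$, I would set $u_i:=s(\varphi_i)$ and note that $u_i\in M_2(u_i)$ trivially. The upward directedness hypothesis, applied to the Peirce-2 subspaces $M_2(u_1)$ and $M_2(u_2)$, supplies a tripotent $p\in M$ with $M_2(u_i)\subseteq M_2(p)$ for $i=1,2$; in particular $s(\varphi_1),s(\varphi_2)\in M_2(p)$. Proposition~\ref{p 4} then furnishes a norm-one functional $\psi\in M_*$ (with the bonus that $s(\psi)\le p$) satisfying the desired inequality.

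It remains to verify the ``in particular'' clause. When $M$ is a JBW$^*$-algebra the unit $1$ is a unitary tripotent, so $M_2(1)=M$ and every Peirce-2 subspace is trivially contained in $M_2(1)$; upward directedness is automatic. When $M=pV$ for a properly infinite projection $p$ in a von Neumann algebra $V$, upward directedness is a structural property of such triples already accessible from the tools of \cite{hamhalter2019mwnc}. Concretely, for tripotents $e_1,e_2\in pV$ with final projections $r_k=e_ke_k^*\leq p$ and initial projections $q_k=e_k^*e_k$, the Peirce-2 subspaces take the form $(pV)_2(e_k)=r_k V q_k$. The plan is to find projections $r\leq p$ with $r\ge r_1\vee r_2$ and $q\ge q_1\vee q_2$ which are Murray--von Neumann equivalent in $V$, so that a partial isometry $e\in pV$ with $ee^*=r$ and $e^*e=q$ exists; then $(pV)_2(e)=rVq$ contains both $(pV)_2(e_k)$. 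The existence of such $q$ and $r$ is precisely what proper infiniteness of $p$ delivers through a standard halving/absorption argument in $pVp$.

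There is essentially no new obstacle at the level of this corollary: the analytic work has been absorbed into Lemma~\ref{l 1.5} and Proposition~\ref{p 3}, and assembled into Proposition~\ref{p 4}. The only substantive remaining check is the verification of upward directedness in the $pV$ case, and this is where the assumption of \emph{proper} infiniteness (rather than mere infiniteness) of $p$ becomes essential, since it is exactly proper infiniteness that supplies enough Murray--von Neumann equivalent subprojections of $p$ to absorb the final projections of any two prescribed tripotents.
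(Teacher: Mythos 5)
Your proof of the main inequality is exactly the paper's: upward directedness gives a single tripotent whose Peirce-2 subspace contains both support tripotents, and Proposition~\ref{p 4} does the rest. For the ``in particular'' clause the paper simply cites \cite[Remark 9.13]{hamhalter2019mwnc}, whereas you reprove that remark directly (the unit in the JBW$^*$-algebra case; for $pV$, taking $r=p$ and absorbing $q_1\vee q_2$ into a projection equivalent to $p$ via proper infiniteness), which is sound modulo the routine central-support bookkeeping in the absorption step.
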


\begin{proof} The first statement in a straight consequence of the previous Proposition \ref{p 4}. 
The second statement follows from \cite[Remark 9.13]{hamhalter2019mwnc}.
\end{proof}

\section{Finite dimensional Cartan factors} 

In this section we shall deal with JBW$^*$-triples of the form $L^{\infty}(\mu, C)$, where $\mu$ is a probability measure and $C$ is a finite dimensional Cartan factor. In fact, the results work in a slightly more general setting -- if $C$ is a finite-dimensional JB$^*$-triple.  Henceforth, let $C$ be such a JB$^*$-triple.  Since $C$ is finite dimensional, every bounded linear operator from $C$ into a Hilbert space attains its norm.
In particular, any seminorm $\norm{\cdot}_{\varphi_1,\varphi_2}$ attains its maximum on the unit ball $B_C$. We will show that this property can be carried over to the space $L^\infty(\mu,C)$.
This  goal  will be obtained after a series of lemmata.

\begin{lemma}\label{l (B1)} The mapping $C\times C^*\to [0,\infty)$, $(x,\varphi)\mapsto\norm{x}_\varphi$ is continuous.
\end{lemma}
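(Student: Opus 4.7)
The plan is a subsequence compactness argument exploiting the finite-dimensionality of $C$.

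First I would dispose of the trivial case $\varphi_0=0$: inequality \eqref{eq semi-norm inequality} gives $\|x_n\|_{\varphi_n}\le\sqrt{\|\varphi_n\|}\,\|x_n\|\to 0=\|x_0\|_{\varphi_0}$ whenever $(x_n,\varphi_n)\to(x_0,0)$, so continuity at such points is automatic.

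For the substantial case $\varphi_0\neq 0$, I would fix sequences $x_n\to x_0$ and $\varphi_n\to\varphi_0$, assuming (for large $n$) that $\varphi_n\neq 0$ so that $s(\varphi_n)$ is defined. Because $C$ is finite-dimensional and every nonzero tripotent in a JB$^*$-triple has norm one (by axiom $(JB^*$-$3)$), the set $\mathcal{T}$ of tripotents of $C$ is bounded, and it is closed by continuity of the triple product; hence $\mathcal{T}$ is compact. Thus, after passing to a subsequence, we may assume that $s(\varphi_n)\to u$ for some tripotent $u\in\mathcal{T}$.

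The crucial observation is that this cluster tripotent $u$ automatically dominates $s(\varphi_0)$: taking the limit in the identity $\varphi_n(s(\varphi_n))=\|\varphi_n\|$ gives $\varphi_0(u)=\|\varphi_0\|$, and by \eqref{eq order in support tripotents} this forces $u\ge s(\varphi_0)$. Although the assignment $\varphi\mapsto s(\varphi)$ is in general discontinuous, this is all we need, because the identity
$$\|x\|_\varphi^2=\varphi\{x,x,u\}\quad\text{for every tripotent }u\ge s(\varphi)$$
recorded in Subsection~\ref{S:1.1} allows us to compute
$$\|x_n\|_{\varphi_n}^2=\varphi_n\{x_n,x_n,s(\varphi_n)\}\longrightarrow\varphi_0\{x_0,x_0,u\}=\|x_0\|_{\varphi_0}^2,$$
using joint continuity of the triple product and of the duality pairing. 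As every subsequence of $(\|x_n\|_{\varphi_n})$ admits a further subsequence converging to $\|x_0\|_{\varphi_0}$, the whole sequence converges; since $C\times C^*$ is metrizable, sequential continuity yields continuity.

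The main obstacle is the aforementioned discontinuity of $\varphi\mapsto s(\varphi)$; the proposal handles it by replacing $s(\varphi_0)$ with an arbitrary cluster tripotent $u$ of $(s(\varphi_n))$ and exploiting the freedom to represent $\|x\|_\varphi^2$ using any tripotent that dominates $s(\varphi)$.
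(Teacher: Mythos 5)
Your proof is correct and follows essentially the same route as the paper: both pass to a subsequence along which the support tripotents $s(\varphi_n)$ converge (using compactness of the set of tripotents in the finite-dimensional $C$), observe that the limit tripotent $u$ satisfies $\varphi_0(u)=\|\varphi_0\|$ and hence can be used in place of $s(\varphi_0)$ to compute $\|\cdot\|_{\varphi_0}$, and conclude by continuity of $(x,\varphi,e)\mapsto\varphi\{x,x,e\}$. The only cosmetic difference is that the paper packages the limit step as closedness of the set $A=\{(\varphi,e):\varphi(e)=\|\varphi\|,\ \{e,e,e\}=e\}$ and argues by contradiction, whereas you invoke \eqref{eq order in support tripotents} directly and use the sub-subsequence formulation (and you treat $\varphi_0=0$ separately, a point the paper glosses over).
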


\begin{proof} The set $$A:=\{(\varphi,e)\in C^*\times S_C: \varphi(e)=\norm{\varphi}, \J eee=e\}$$ is clearly closed. Moreover, the mapping $\Phi:C\times A\to[0,\infty)$ given
by
$$\Phi(x,\varphi,e)=\varphi\J xxe$$
is continuous and $\Phi(x,\varphi,e)=\norm{x}_\varphi^2$ for $x\in C$ and $(\varphi,e)\in A$.\smallskip

Assume now that $(x_n,\varphi_n)$ is a sequence in $C\times C^*$ converging to an element $(x,\varphi)$. We will show that $\norm{x_n}_{\varphi_n}\to\norm{x}_\varphi$. Otherwise, up to passing to a subsequence, we may assume that $\norm{x_n}_{\varphi_n}\to c\ne\norm{x}_\varphi$
(note that the sequence $(\norm{x_n}_{\varphi_n})_n$ is bounded). Let $e_n=s(\varphi_n)$ for $n\in\en$. We may assume, without loss of generality, that the sequence $(e_n)$ converges to some $e\in C$. Since $(\varphi_n,e_n)\in A$ for each $n$, necessarily $(\varphi,e)\in A$ as well. Thus
$$\norm{x_n}_{\varphi_n}^2=\Phi(x_n,\varphi_n,e_n)\to\Phi(x,\varphi,e)=\norm{x}_\varphi^2,$$
a contradiction which completes the proof.
\end{proof}

\begin{lemma}\label{l (C1)} The set valued mapping $\Psi:(C^*)^2\to 2^{B_C}$ defined by
$$\Psi(\varphi_1,\varphi_2)=\{x\in B_C\setsep \norm{x}_{\varphi_1,\varphi_2}=\max_{y\in B_C} \norm{y}_{\varphi_1,\varphi_2}\}$$
is upper semi-continuous and compact-valued. Consequently, there is a Borel-measurable selection $H$ from $\Psi$.
\end{lemma}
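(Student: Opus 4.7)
The plan is to exploit finite-dimensionality of $C$ (which makes $B_C$ a compact metric space and $(C^*)^2$ a Polish space) together with the joint continuity established in Lemma~\ref{l (B1)}, and then invoke a classical measurable selection theorem.

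For compact-valuedness, I would note that Lemma~\ref{l (B1)} applied twice yields that, for each fixed $(\varphi_1,\varphi_2)\in (C^*)^2$, the map $y\mapsto \norm{y}_{\varphi_1,\varphi_2}$ is continuous on the compact set $B_C$; hence it attains its maximum and $\Psi(\varphi_1,\varphi_2)$ is its non-empty, closed, hence compact, argmax. For upper semi-continuity, the main step is to observe that the maximum function $M(\varphi_1,\varphi_2):=\max_{y\in B_C}\norm{y}_{\varphi_1,\varphi_2}$ is continuous on $(C^*)^2$; this follows from the joint continuity provided by Lemma~\ref{l (B1)} together with compactness of $B_C$, and is essentially Berge's maximum theorem. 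A sequential argument then finishes it: given $(\varphi_1^n,\varphi_2^n)\to(\varphi_1,\varphi_2)$ and $x_n\in \Psi(\varphi_1^n,\varphi_2^n)$, any cluster point $x\in B_C$ of $(x_n)$ satisfies $\norm{x}_{\varphi_1,\varphi_2}=\lim_k M(\varphi_1^{n_k},\varphi_2^{n_k})=M(\varphi_1,\varphi_2)$, so $x\in \Psi(\varphi_1,\varphi_2)$.

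For the consequent Borel selection, I would appeal to the Kuratowski--Ryll-Nardzewski theorem. Upper semi-continuity combined with the compact values of $\Psi$ implies that for every closed $K\subseteq B_C$, the set $\{(\varphi_1,\varphi_2)\setsep \Psi(\varphi_1,\varphi_2)\cap K\ne\emptyset\}$ is closed in $(C^*)^2$; since every open subset of the compact metric space $B_C$ is $F_\sigma$, the preimage of any open set under $\Psi$ (in the lower sense) is Borel. Thus $\Psi$ is weakly Borel measurable into the Polish space $B_C$ with closed values, and the selection theorem produces the desired Borel map $H$. The main (if minor) obstacle I anticipate is precisely this translation between the ``upper'' semi-continuity we obtain from Berge and the ``lower'' weak Borel measurability required to invoke Kuratowski--Ryll-Nardzewski; once that is handled the argument is routine.
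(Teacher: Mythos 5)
Your proposal is correct and follows essentially the same route as the paper: both rest on the joint continuity from Lemma~\ref{l (B1)} together with compactness of $B_C$ to show the argmax correspondence has closed graph (the paper writes the graph as an intersection of closed sets and cites \cite[Lemma 3.1.1]{fabian-book}, while you argue via continuity of the value function and a sequential cluster-point argument), and both then invoke the Kuratowski--Ryll-Nardzewski theorem for the Borel selection. Your explicit handling of the passage from upper semi-continuity to weak Borel measurability (closed hit-sets plus open sets being $F_\sigma$) is a detail the paper leaves implicit, but it is the standard and correct justification.
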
 

\begin{proof} Taking into account that $S_{C^*}$ is compact, by \cite[Lemma 3.1.1]{fabian-book} it is enough to show that the set
$$\{(\varphi_1,\varphi_2,x)\in (C^*)^2\times B_C\setsep
\norm{x}_{\varphi_1,\varphi_2}=\max_{y\in B_C} \norm{y}_{\varphi_1,\varphi_2}\}$$
is closed. But this easily follows from Lemma~\ref{l (B1)} as this set equals
$$\bigcap_{y\in B_C}\{(\varphi_1,\varphi_2,x)\in (C^*)^2\times B_C\setsep
\norm{x}_{\varphi_1,\varphi_2}\ge \norm{y}_{\varphi_1,\varphi_2}\}.$$

Since $\Psi$ has clearly nonempty values, the final statement follows, for example, from the Kuratowski-Ryll-Nardzewski theorem (see \cite[Theorem 18.13]{aliprantisborder}).
\end{proof}

Let $(\Omega,\Sigma,\mu)$ be a probability space, and let $M=L^\infty(\mu,C)$. Then $M$ is a JBW$^*$-triple (with the triple product defined pointwise) and $M_*=L^1(\mu,C^*)$.\smallskip

We need a more concrete description of the elements in $M_*$. Assume $g\in M_*=L^1(\mu,C^*)$. Let $u=s(g)$. Then $u$ is a tripotent in $M$, hence $u(\omega)$ is a tripotent in $C$ for almost all $\omega\in\Omega$. Under these circumstances we have

$$\begin{aligned}
\norm{g}&=g(u)=\Re g(u)=\Re \int \ip{g(\omega)}{u(\omega)}\di\mu(\omega)
= \int \Re \ip{g(\omega)}{u(\omega)}\di\mu(\omega)
\\&\le  \int \abs{\ip{g(\omega)}{u(\omega)}}\di\mu(\omega)
\le \int \norm{g(\omega)}\cdot\norm{u(\omega)}\di\mu(\omega)\le
 \int \norm{g(\omega)}\di\mu(\omega)=\norm{g}\end{aligned}$$
So, we have everywhere equalities, hence $\ip{g(\omega)}{u(\omega)}=\norm{g(\omega)}$ almost everywhere, and thus
$u(\omega)\ge s(g(\omega))$ almost everywhere (cf. \eqref{eq order in support tripotents}).\smallskip

It follows that for almost all $\omega$ we have
$$\norm{x}_{g(\omega)}^2=\ip{g(\omega)}{\J xx{u(\omega)}},\hbox{ for all } x\in C.$$
Therefore, given $f\in M$ we have
$$\norm{f}_g^2= \ip{g}{\J ffu}=\int \ip{g(\omega)}{\J{f(\omega)}{f(\omega)}{u(\omega)}}\di\mu(\omega)=
\int \norm{f(\omega)}_{g(\omega)}^2\di\mu(\omega).$$

Let $g_1,g_2\in L^1(\mu,C^*)$. Let $H$ be the Borel-measurable selection from $\Psi$ given by \ref{l (C1)}. We set $f(\omega)=H(g_1(\omega),g_2(\omega))$. Then $\norm{f}_\infty\le 1$. Let $h\in L^\infty(\mu,C)$ be any element of the unit ball. Then
$$\norm{h}_{g_1,g_2}^2=\int \norm{h(\omega)}_{g_1(\omega),g_2(\omega)}^2\di\mu(\omega)
\le \int \norm{f(\omega)}_{g_1(\omega),g_2(\omega)}^2\di\mu(\omega)
=\norm{f}_{g_1,g_2}^2.$$
Therefore the pre-Hilbert semi-norm $\norm{\cdot}_{g_1,g_2}$ attains its maximum on the closed unit ball of $L^\infty(\mu,C)$ (at $f$).\smallskip

The previous arguments combined with Lemma \ref{l:double NA -> estimate} provide the following solution to the little Grothendieck problem for JBW$^*$-triples of the form $L^\infty(\mu,C)$.

\begin{prop}\label{p LG Linfty mu C} Let $(\Omega,\Sigma,\mu)$ be a probability space, and let $M=L^\infty(\mu,C)$, where $C$ is a finite dimensional JB$^*$-triple.
Then for every couple of normal functionals $g_1,g_2\in M_*$ the pre-Hilbert semi-norm $\norm{\cdot}_{g_1,g_2}$ attains its maximum on the closed unit ball of $L^\infty(\mu,C)$, and thus there exists a norm-one functional $h\in M_*$ satisfying $$\|f \|_{g_1,g_2} \leq  \sqrt2\,\sqrt{\norm{g_1}+\norm{g_2}}\, \|f\|_{h},$$ for all $f\in M$.
\end{prop}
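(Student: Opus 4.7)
The plan is simply to assemble the ingredients that the preceding paragraphs of the section have already put in place, since the heart of the argument is the pointwise-maximization trick combined with Lemma~\ref{l:double NA -> estimate}. First, I would identify $M_* = L^1(\mu,C^*)$ and, for a normal functional $g\in M_*$ with support tripotent $u = s(g)$, establish the key pointwise relation
$$u(\omega)\ge s(g(\omega))\quad\text{for almost every }\omega\in\Omega,$$
by writing $\norm g = g(u)$ as an integral and chasing the equality case in the chain of estimates $|\ip{g(\omega)}{u(\omega)}|\le\norm{g(\omega)}\cdot\norm{u(\omega)}\le\norm{g(\omega)}$, then invoking \eqref{eq order in support tripotents} pointwise. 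This yields the explicit integral formula
$$\norm f_g^2 = \int \norm{f(\omega)}_{g(\omega)}^2\di\mu(\omega),\quad f\in M,$$
and in particular the additive formula $\norm f_{g_1,g_2}^2 = \int \norm{f(\omega)}_{g_1(\omega),g_2(\omega)}^2\di\mu(\omega)$.

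Next, I would apply Lemma~\ref{l (C1)} to produce a Borel-measurable selection $H:(C^*)^2\to B_C$ with $H(\varphi_1,\varphi_2)$ maximizing $\norm{\cdot}_{\varphi_1,\varphi_2}$ on $B_C$ for each $(\varphi_1,\varphi_2)$, and I would define $f\in M$ by $f(\omega) = H(g_1(\omega),g_2(\omega))$. This is a measurable $C$-valued function with $\norm f_\infty\le 1$, hence $f\in B_M$. For any other $h\in B_M$, the pointwise maximality of $f(\omega)$ yields
$$\norm{h(\omega)}_{g_1(\omega),g_2(\omega)}^2 \le \norm{f(\omega)}_{g_1(\omega),g_2(\omega)}^2\quad\text{a.e.,}$$
so integrating gives $\norm h_{g_1,g_2}\le\norm f_{g_1,g_2}$, establishing that the seminorm $\norm\cdot_{g_1,g_2}$ attains its maximum over $B_M$ at $f$.

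Once the attainment is in hand, the second assertion follows at once by invoking Lemma~\ref{l:double NA -> estimate}, which produces the norm-one functional $h\in M_*$ with the desired estimate. The only genuinely delicate point is the existence of the Borel-measurable selection $H$ and the measurability of $\omega\mapsto H(g_1(\omega),g_2(\omega))$, but both are already secured by Lemma~\ref{l (C1)} (via Kuratowski--Ryll-Nardzewski) and the continuity statement of Lemma~\ref{l (B1)}; beyond that the argument is routine measure theory and the application of a lemma proved above.
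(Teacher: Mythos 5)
Your proposal is correct and follows essentially the same route as the paper: the integral formula for $\norm{\cdot}_g$ via the pointwise comparison of support tripotents, the Borel-measurable selection from Lemma~\ref{l (C1)} to build a pointwise maximizer $f(\omega)=H(g_1(\omega),g_2(\omega))$, and then Lemma~\ref{l:double NA -> estimate}. This is exactly the argument the paper gives in the paragraphs preceding the proposition.
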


\section{Right ideals associated with finite projections in a von Neumann algebra} The aim of this section is to solve the little Grothendieck problem for  the summands $p_2V$ and $p_3V$ from Proposition~\ref{P:representation}. They require different methods, but some tools are common for both cases.
The first lemma shows how to express the hilbertian semi-norms using polar decomposition of the functional.

\begin{lemma}\label{L:shift phi to state}
Let $V$ be a von Neumann algebra, $p\in V$ a finite projection and $\varphi\in (pV)_*$. Then there is a positive functional $\psi$ on $pVp$ and a unitary element $u\in V$ such that $\norm{\psi}=\norm{\varphi}$, $\varphi(x)=\psi(xup)$ for $x\in pV$, $s(\psi) u^* = s(\varphi),$  and
$$ \norm{x}_\varphi^2=\frac12\left(\psi(xx^*)+\psi(pu^*x^*xup) \right) \mbox{ for  all }
 x\in pV.$$
\end{lemma}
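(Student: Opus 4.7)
The plan is to use the polar decomposition of $\varphi$ together with the finiteness of $p$ to dilate the support tripotent $e := s(\varphi)$ to a unitary in $V$. Let $r := ee^*$ and $q := e^*e$ be the final and initial projections of $e$. Since $e \in pV$ one has $r \le p$, so $r$ is finite, and therefore $q \sim r$ (via $e$) is also finite. I will invoke the classical fact from von Neumann algebra theory that equivalent finite projections have equivalent complements; this yields a partial isometry $w \in V$ with $w^*w = 1-r$ and $ww^* = 1-q$. Setting $u := e^* + w$, the identities $eq = e = re$ force $ew = 0$ and $we = 0$, and consequently $u^*u = ee^* + w^*w = r + (1-r) = 1$ and $uu^* = e^*e + ww^* = q + (1-q) = 1$; thus $u$ is unitary with $ur = e^*$.

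Next I define $\psi$ on $pVp$ by $\psi(a) := \varphi(ae)$. This is a normal linear functional with $\norm{\psi} \le \norm{\varphi}$ and, since $\psi(r) = \varphi(re) = \varphi(e) = \norm{\varphi}$, one also has $\norm{\psi} = \norm{\varphi}$ with $s(\psi) \le r$. To prove positivity I will use the canonical JB$^*$-algebra isomorphism $\alpha \colon (pV)_2(e) = rVq \to rVr$, $z \mapsto ze^*$, which sends the unit $e$ of $rVq$ to $r$ and intertwines the JB$^*$-algebra product on $rVq$ with the usual product on the von Neumann algebra $rVr$. The faithful positive functional $\varphi|_{rVq}$ provided by the general theory of support tripotents is transported by $\alpha$ to a faithful positive normal functional $\psi_0$ on $rVr$. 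Using $\varphi = \varphi \circ P_2(e)$ with $P_2(e)(y) = ryq$ on $pV$, one easily checks $\psi(a) = \psi_0(rar)$ for all $a \in pVp$. In particular $\psi(b^*b) = \psi_0((br)^*(br)) \ge 0$, and faithfulness of $\psi_0$ forces $s(\psi) = r$.

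The remaining assertions follow by short computations. The identity $s(\psi) u^* = r(e + w^*) = e = s(\varphi)$ uses $re = e$ together with $rw^* = 0$ (since $w^* = (1-r)w^*(1-q)$). For the second formula, $upe = e^*e + wpe = q$ because $wpe = (1-q)w(p-r)e = 0$ from $(p-r)e = 0$; hence $\psi(xup) = \varphi(xupe) = \varphi(xq) = \varphi(rxq) = \varphi(x)$ using $\varphi \circ P_2(e) = \varphi$. For the norm identity, $\norm{x}_\varphi^2 = \varphi\{x,x,e\} = \tfrac12\varphi(xx^*e + ex^*x)$, the first summand equals $\psi(xx^*)$ directly by the definition of $\psi$. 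Expanding $pu^*x^*xupe = ex^*xq + (p-r)w^*(1-q)x^*xq$ and using $\varphi = \varphi \circ P_2(e)$ to kill the second piece (as $r(p-r) = 0$) gives $\psi(pu^*x^*xup) = \varphi(ex^*xq) = \varphi(ex^*x)$, completing the identity. The main obstacle is really the first step: the construction of $u$ hinges on the finiteness of $p$, without which one cannot in general pass from $r \sim q$ to $1-r \sim 1-q$; once $u$ is available the remaining steps are straightforward Peirce calculus.
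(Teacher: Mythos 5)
Your proof is correct and takes essentially the same route as the paper's: use the finiteness of $p$ (via the fact that equivalent finite projections have equivalent complements) to extend the support partial isometry $s(\varphi)$, resp.\ its adjoint, to a unitary $u$, define $\psi$ by composing $\varphi$ with right multiplication by $s(\varphi)$, and verify all the identities by Peirce calculus. The only cosmetic differences are that you construct the unitary explicitly rather than citing Takesaki, and you obtain positivity of $\psi$ and the exact value $s(\psi)=ee^*$ through the Jordan $*$-isomorphism $rVq\to rVr$ and Friedman--Russo faithfulness, whereas the paper gets positivity directly from $\psi(ee^*)=\norm{\psi}$ and deduces $s(\psi)u^*=s(\varphi)$ from a norm-attainment argument; both variants are sound.
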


\begin{proof}
Let $v=s(\varphi)$. Then $v$, being a tripotent in $pV$, is a partial isometry in $V$ with final projection $q\le p$. Denote by $r$ the initial projection. Further, since $p$ is finite, $q$ is finite as well, hence $v$ can be extended to a unitary operator $\tilde v\in V$ (cf. \cite[Proposition V.1.38]{Tak}).\smallskip

Set $\psi(x):=\varphi(x\tilde{v})$ for $x\in pV$. Since $x\mapsto x\tilde{v}$ is an isometry of $pV$ onto $pV$, we deduce that $\norm{\psi}=\norm{\varphi}$. Further, since
$$\psi(q)=\varphi(q\tilde{v})=\varphi(v)=\norm{\varphi}=\norm{\psi},$$
we deduce that $s(\psi)\le q$  (cf. \eqref{eq order in support tripotents}), hence $\psi|_{pVp}$ is a positive functional on $pVp$. It remains to observe that one can take $u=\tilde{v}^*$. Indeed, for any $x\in pV$ we have
$$\psi(x u p ) = \psi(x\tilde{v}^*p)=\psi(qx\tilde{v}^*pq)=\psi(qx\tilde{v}^*q)=\psi(qxr\tilde{v}^*)=\varphi(qxr)=\varphi(x).$$

In particular, $$\norm{\varphi}=\norm{\psi} = \psi (s(\psi))= \psi (s(\psi) p ) = \psi (s(\psi) u^* u p ) = \varphi (s(\psi) u^*),$$ which shows that $s(\psi) u^* \geq s(\varphi)$  (cf. \eqref{eq order in support tripotents}). But $s(\psi)\leq q$ implies that $s(\psi) u^* = s(\varphi)$.\smallskip

Finally, for any $x\in pV$ we have
$$\begin{aligned}\norm{x}_\varphi^2&=\varphi(\J xxv)=\frac12\varphi(xx^*v+vx^*x)=\frac12\psi(xx^*vup+vx^*xup)\\&=\frac12(\psi(xx^*)+\psi(pu^*x^*xup)),\end{aligned}$$
where in the last equality we used that $vup=v\tilde{v}^*p=qp=q$ and $s(\psi)\le q$ to obtain the first term and
$$\psi(vx^*xup)=\psi(qu^*x^*xup)=\psi(qpu^*x^*xup)=\psi(pu^*x^*xup),$$
to obtain the second term.
\end{proof}

The key result for algebras of type $II_1$ is established in the next lemma.

\begin{lemma}\label{L:max of double norm}
Let $V$ be a von Neumann algebra of type $II_1$ and let $p\in V$ be a projection. Then for each couple of  functionals $\varphi_1,\varphi_2\in (pV)_*$ the pre-Hilbert semi-norm $\norm{\cdot}_{\varphi_1,\varphi_2}$ attains its maximum on the closed unit ball of $pV$.
\end{lemma}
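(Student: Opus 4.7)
The plan is to construct a maximizer explicitly using polar decomposition inside the finite algebra $V$ together with a non-commutative Hardy--Littlewood rearrangement argument. First, applying Lemma~\ref{L:shift phi to state} to each $\varphi_j$ produces a positive functional $\psi_j$ on $pVp$ and a unitary $u_j\in V$ with
$$\|x\|_{\varphi_j}^2=\tfrac12\bigl(\psi_j(xx^*)+\psi_j(pu_j^*x^*xu_jp)\bigr),\quad x\in pV.$$
Since $V$ is of type $II_1$, it admits a faithful normal tracial state (or, in the non-factor case, a center-valued trace) $\tau$, with respect to which $\psi_j$ has a density $h_j\in L^1_+(pVp,\tau)$. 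A short computation using the trace property together with the unitarity of $u_j$ rewrites the combined seminorm as
$$\|x\|_{\varphi_1,\varphi_2}^2=\tfrac12\tau(axx^*)+\tfrac12\tau(bx^*x),\quad x\in pV,$$
where $a:=h_1+h_2\in L^1_+(pVp)$ and $b:=u_1h_1u_1^*+u_2h_2u_2^*\in L^1_+(V)$.

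Given $x\in B_{pV}$, the finiteness of $V$ allows a polar decomposition $x=v|x|$ with $v\in V$ a partial isometry; the identity $x=px$ forces $v\in pV$ (so $vv^*\le p$), while $|x|\in V$ satisfies $\||x|\|_\infty\le 1$ and $|x|^2\le v^*v$. Consequently $xx^*=v|x|^2v^*\le vv^*$ and $x^*x=|x|^2\le v^*v$, so
$$\|x\|_{\varphi_1,\varphi_2}^2\le\tfrac12\tau(avv^*)+\tfrac12\tau(bv^*v),$$
with equality when $x$ is itself a partial isometry. Hence it is enough to maximize $\tfrac12\tau(aq)+\tfrac12\tau(br)$ over pairs $(q,r)$ of projections in $V$ with $q\le p$ and $q\sim r$ (Murray--von Neumann equivalent, via the partial isometry $v$).

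Because $a$ is supported in $pVp$, the first term is bounded by $\tau(a)$, attained at $q=p$. For the second term, maximizing $\tau(br)$ over projections $r\in V$ with $r\sim p$ is a non-commutative rearrangement problem whose solution is the spectral projection $r^\star$ of $b$ corresponding to the ``top'' portion of mass equal to the center-valued trace of $p$. The decisive input is the comparison theorem in a finite von Neumann algebra: any projection having the same center-valued trace as $p$ is Murray--von Neumann equivalent to $p$, so there exists a partial isometry $v^\star\in V$ with $v^\star(v^\star)^*=p$ and $(v^\star)^*v^\star=r^\star$. This $v^\star$ automatically belongs to $B_{pV}$ and saturates both upper bounds simultaneously, thereby attaining the maximum of $\|\cdot\|_{\varphi_1,\varphi_2}$ on $B_{pV}$.

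The main obstacle is the rigorous execution of the rearrangement step when $V$ is not a factor. In the factor case the range of $\tau$ is an interval, so the spectral projection $r^\star$ of prescribed trace $\tau(p)$ can be produced directly from the spectral theorem. In the general type $II_1$ situation one must work fiberwise, either by disintegrating $V$ into factors and measurably selecting the fiberwise optimal spectral projections, or by replacing the numerical trace with the center-valued trace and invoking the corresponding comparison theorem; once this is done the construction of $v^\star$ proceeds as above.
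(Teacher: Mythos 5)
Your strategy starts exactly as the paper's does (Lemma~\ref{L:shift phi to state}, reduction to partial isometries, and ultimately the comparison theorem for the center-valued trace), but it contains a genuine gap that you yourself flag: the ``non-commutative rearrangement'' step is asserted, not proved, and it is the heart of the matter. Two concrete problems. First, even in the factor case the maximizer of $\tau(br)$ over projections $r$ with $\tau(r)=\tau(p)$ need not be a spectral projection of $b$: if the spectral distribution of $b$ has an atom straddling the required trace value (e.g.\ $b$ a multiple of a projection $e$ with $\tau(e)>\tau(p)$), the optimal $r^\star$ must split that atom, and the existence of a suitable subprojection together with its optimality (a Ky Fan--type variational principle) is precisely what has to be proved --- this is exactly where type $II_1$-ness must enter. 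Second, in the non-factor case the disintegration route presupposes a separable predual plus a measurable selection of fiberwise optima, neither of which is available for a general type $II_1$ algebra; the center-valued-trace alternative is the right one, but then the rearrangement principle has to be formulated and proved with $T$ in place of $\tau$, which you do not do. There are also two smaller unjustified steps: decoupling the maximization of $\tau(aq)$ (at $q=p$) from that of $\tau(br)$ (over $r\sim p$) implicitly uses that every $r\precsim p$ enlarges to some $r'\sim p$ with $r'\ge r$ (true in a finite algebra, but it needs an argument), and the existence of densities $h_j$ with respect to a faithful normal tracial state presupposes $\sigma$-finiteness.

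The paper sidesteps all of this by never trying to identify the maximizer. After reducing, via Krein--Milman (the extreme points of $B_{pV}$ are the partial isometries $x$ with $xx^*=p$), to maximizing $\frac12\bigl(\psi_1(pu_1^*x^*xu_1p)+\psi_2(pu_2^*x^*xu_2p)\bigr)$ over $\{x\setsep xx^*=p\}$, it relaxes the range of $x^*x$ to the weak$^*$-compact convex set $K=\{y\setsep 0\le y\le 1,\ T(y)=T(p)\}$, on which the objective is affine and weak$^*$-continuous and hence attains its maximum at an extreme point of $K$. A short spectral argument --- splitting $q=E([\delta,1-\delta])$ into two equivalent halves, which is where type $II_1$-ness is used --- shows that every extreme point of $K$ is a projection, and the comparison theorem ($T(y)=T(p)\Rightarrow y\sim p$) then produces the partial isometry at which the supremum is attained. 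If you want to salvage your construction, the cleanest repair is to replace the rearrangement claim by exactly this extreme-point argument applied to $y\mapsto\tau(by)$ on $K$.
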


\begin{proof}
For $j=1,2$ let $\psi_j$ be a positive functional in $(pVp)_*$ and $u_j\in V$ a unitary element provided by Lemma~\ref{L:shift phi to state} for $\varphi_j$.
Then $$\norm{x}_{\varphi_1,\varphi_2}^2=\frac12(\psi_1(xx^*)+\psi_1(pu_1^*x^*xu_1p)+\psi_2(xx^*)+\psi_2(pu_2^*x^*xu_2p))$$
for any $x\in pV$.\smallskip

By the Krein-Milman theorem and the weak$^*$-compactness of the closed unit ball of $pV$, the supremum of this semi-norm on the closed unit ball of $pV$ is attained if and only if it is attained at an extreme point of this closed unit ball. Note that a tripotent in $pV$ is a partial isometry in $V$ with final projection below $p$, the tripotent is complete (i.e. it is an extreme point of the closed unit ball) if and only if its final projection equals $p$. Therefore the supremum of the semi-norm over the unit ball equals $\sqrt C$, where
$$\begin{aligned}C&=\sup\left\{\frac12(\psi_1(xx^*)+\psi_1(pu_1^*x^*xu_1p)+\psi_2(xx^*)+\psi_2(pu_2^*x^*xu_2p))\setsep xx^*=p\right\}\\
&=\sup\left\{\frac12(\psi_1(p)+\psi_2(p))+\frac12(\psi_1(pu_1^*x^*xu_1p)+\psi_2(pu_2^*x^*xu_2p))\setsep xx^*=p\right\}.\end{aligned}$$
Let $T$ be the center-valued trace on $V$ (cf. \cite[Theorem V.2.6]{Tak}). If $x\in V$ is such that $xx^*=p$, then $0\le x^*x\le 1$ and $T(x^*x)=T(p)$.
Hence
$$C\le \sup\left\{\frac12(\psi_1(p)+\psi_2(p))+\frac12(\psi_1(pu_1^*yu_1p)+\psi_2(pu_2^*yu_2p))\setsep \begin{array}{c}
    0\le y\le 1, \\  T(y)=T(p)  \end{array}\right\}.$$
The supremum on the right-hand side is attained, as it is a supremum of an affine weak$^*$-continuous functional over the convex weak$^*$-compact set
$$K=\left\{y\in V\setsep 0\le y\le 1, T(y)=T(p)\right\}.$$
So, the supremum is attained at an extreme point of $K$.
Now, we claim that every extreme point of $K$ is a projection.
Indeed, assume that, say, $y\in K$ is not a projection. Since $0\le y\le 1$, we may consider the spectral measure $E$ of $y$. Since $y$ is not a projection, there is some $\delta\in(0,\frac12)$ such that $q=E([\delta,1-\delta])\ne0$. Since $V$ is of type $II_1$, there is a projection $r\le q$ with $r\sim q-r$. Set
$$v=y+\delta(2r-q),\quad w=y-\delta(2r-q).$$
Then $y=\frac12(v+w)$, $T(v)=T(w)=T(y)=T(p)$ (as $T(r)=\frac12T(q)$ by \cite[Corollary V.2.8]{Tak}). Moreover
$$v\ge y-\delta q\ge 0,\mbox{ and }v\le y+\delta q\le 1,$$
and similarly for $w$. It follows that $v,w\in K$, so $y$ is not an extreme point of $K$. This finishes the proof of the claim.\smallskip

Fix $y\in\ext K$ where the supremum is attained. Then $y$ is a projection satisfying $T(y)=T(p)$, so $y\sim p$  by \cite[Corollary V.2.8]{Tak}. Therefore there is $x\in V$ with $xx^*=p$ and $x^*x=y$. We finally observe that the supremum $C$ is attained at this element $x$.
\end{proof}

The following technical lemma enables us, roughly speaking, to reduce the case $pV$ for a finite projection $p$ to the case $pV$ where the whole $V$ is finite.

{
\begin{lemma}\label{l 5.5 new}
Let $V$ be a von Neumann algebra and $p\le t$ two projections in $V$ such that $p$ is finite. Consider the JBW$^*$-triple $M=pV$ and its subtriple $N=pVt$. Let $\varphi_1,\varphi_2\in M_*$ be two  functionals such that $s(\varphi_j)\in N$ for $j=1,2$. Then
$$\sup\{\norm{x}_{\varphi_1,\varphi_2}\setsep x\in B_M\}=\sup\{\norm{x}_{\varphi_1,\varphi_2}\setsep x\in B_N\}.$$
\end{lemma}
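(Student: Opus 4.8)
Since $N\subseteq M$, only the estimate $\sup\{\norm{x}_{\varphi_1,\varphi_2}:x\in B_M\}\le\sup\{\norm{y}_{\varphi_1,\varphi_2}:y\in B_N\}$ needs proof, and I would establish the stronger statement that \emph{for every $x\in B_M$ there is a complete tripotent $e\in N$ with $\norm{e}_{\varphi_1,\varphi_2}\ge\norm{x}_{\varphi_1,\varphi_2}$}. Write $v_j=s(\varphi_j)$, $q_j=v_jv_j^*\le p$ and $r_j=v_j^*v_j\le t$; since $p$ is finite, so are $q_j$ and $r_j\sim q_j$. Applying Lemma~\ref{L:shift phi to state} to each $\varphi_j$ produces positive $\psi_j\in(pVp)_*$ and unitaries $u_j\in V$ with $\norm{\psi_j}=\norm{\varphi_j}$, $s(\psi_j)u_j^*=v_j$ and
\[
\norm{x}_{\varphi_j}^2=\tfrac12\bigl(\psi_j(xx^*)+\psi_j(pu_j^*x^*xu_jp)\bigr),\qquad x\in pV .
\]
Using $u_js(\psi_j)u_j^*=r_j$ and $\psi_j=\psi_j\circ P_2(s(\psi_j))$ one rewrites the second summand as $\psi_j(u_j^*r_jx^*xr_ju_j)$. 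Hence, setting $r=r_1\vee r_2\le t$ and $L(a)=\sum_j\psi_j(u_j^*r_jar_ju_j)$ for $a\ge0$,
\[
\norm{x}_{\varphi_1,\varphi_2}^2=\tfrac12\textstyle\sum_j\psi_j(xx^*)+\tfrac12 L(x^*x),
\]
where $L$ is positive, normal and monotone, and depends on $x^*x$ only through the compression $rx^*xr$ (because $r_j\le r$).

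Two features drive the argument. First, $xx^*\le p$ gives $\psi_j(xx^*)\le\psi_j(p)=\norm{\varphi_j}$, with equality when $xx^*=p$. Secondly, I would control $L(x^*x)=L(rx^*xr)$ by comparison. All projections in play — $p$, $r$, and $e_0:=\operatorname{supp}(x^*x)\sim\operatorname{supp}(xx^*)\le p$ — are finite, hence lie under the finite projection $s:=e_0\vee p\vee r$; working in the finite von Neumann algebra $sVs$ with center-valued trace $T$, a cyclic computation gives
\[
T(rx^*xr)=T(xrx^*)\le T(xx^*)=T(x^*x)\le T(p).
\]
Thus $rx^*xr$ lies in $K=\{\,b:0\le b\le r,\ T(b)\le T(p)\,\}$, and since $L$ is monotone and linear a standard bathtub/extreme-point argument yields a \emph{projection} $\pi\le r$ with $T(\pi)\le T(p)$ and $L(\pi)\ge L(rx^*xr)=L(x^*x)$. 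The crucial gain is that $\pi\le r\le t$.

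I would then manufacture the witnessing tripotent. As $\pi\le t$, $T(\pi)\le T(p)$ and $p\le t$ (so $T(t\wedge s)\ge T(p)$), I enlarge $\pi$ to a projection $\epsilon$ with $\pi\le\epsilon\le t$ and $T(\epsilon)=T(p)$; then $\epsilon\sim p$ in $sVs$. Choosing a partial isometry $e$ with $ee^*=p$ and $e^*e=\epsilon$, the identities $pe=e$ and $e\epsilon=e$ together with $\epsilon\le t$ force $e=et$, so $e\in pVt=N$ is a complete tripotent of $N$ (its final projection is $p$). Finally
\[
\norm{e}_{\varphi_1,\varphi_2}^2=\tfrac12\textstyle\sum_j\psi_j(p)+\tfrac12L(\epsilon)\ge\tfrac12\sum_j\psi_j(xx^*)+\tfrac12L(x^*x)=\norm{x}_{\varphi_1,\varphi_2}^2,
\]
using $\psi_j(p)\ge\psi_j(xx^*)$ and $L(\epsilon)\ge L(\pi)\ge L(x^*x)$ (monotonicity and $\epsilon\ge\pi$). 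Taking the supremum over $x\in B_M$ completes the proof.

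The delicate point — and the reason a naive approach fails — is that a \emph{fixed} $x\in B_M$ cannot in general be transported isometrically into $N$: the obvious candidate $(xx^*)^{1/2}W$ with $W^*W\le t$ exists only when $\operatorname{supp}(x^*x)\vee r\preceq t$, which breaks down exactly when the initial support of $x$ protrudes far out of $t$ (easy $4\times4$ matrix examples exhibit this, and there the supremum over $B_N$ is nonetheless attained by an unrelated element). What rescues the statement is that the initial data enters the seminorm only through the \emph{monotone linear} functional $L$ of the compression $rx^*xr$, whose maximiser may be chosen to be a projection sitting \emph{inside} $r\le t$; this is precisely where the finiteness of $p$ (hence of the $r_j$) is indispensable, both to obtain the trace bound $T(rx^*xr)\le T(p)$ and to realise the optimal projection as the initial projection of a complete tripotent of $N$. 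I expect the center-valued-trace bookkeeping on the finite corner $sVs$, and the justification that the optimum of $L$ over $K$ is attained at a projection, to be the most technical steps.
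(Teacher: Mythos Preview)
Your argument is correct, but it takes a substantially more laborious route than the paper's. The key difference is that the paper never leaves $W=tVt$: instead of invoking Lemma~\ref{L:shift phi to state} (which produces unitaries in all of $V$), it extends the support tripotents $v_j=s(\varphi_j)$ to complete tripotents $u_j$ \emph{within} $N=pVt$ (equivalently, to unitaries of $W$, using finiteness of $p$). With $\zeta_j=\varphi_j(u_j\,\cdot\,)$ positive on $W$, the splitting $x=x_1+x_2$ where $x_1=xt$, $x_2=x(1-t)$ gives
\[
\norm{x}_{\varphi_1,\varphi_2}^2=\tfrac12\textstyle\sum_j\varphi_j\bigl((x_1x_1^*+x_2x_2^*)u_j\bigr)+\tfrac12\sum_j\zeta_j(x_1^*x_1).
\]
For a complete tripotent of $M$ one has $x_1x_1^*+x_2x_2^*=xx^*=p$, so the first sum is the constant $\sum_j\varphi_j(pu_j)$; the second depends only on $x_1\in N$ with $x_1x_1^*\le p$. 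Since $y\mapsto(\sum_j\zeta_j(y^*y))^{1/2}$ is itself a weak$^*$-continuous pre-Hilbertian seminorm on $N$, its supremum on $B_N$ is attained at complete tripotents of $N$ (where $yy^*=p$), and the chain of inequalities closes in a few lines. No center-valued trace, no extreme-point analysis of a constrained set.

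Your approach instead routes through $V$ and then forces a return to $t$ via the optimisation of $L$ over $K=\{0\le b\le r,\ T(b)\le T(p)\}$ in the finite corner $sVs$. The two steps you flag as technical are both genuine: the extreme points of $K$ are indeed projections, but since $sVs$ need not be of type $II_1$ the halving argument of Lemma~\ref{L:max of double norm} is unavailable and one must argue separately on the type~$I$ part (where integrality of $T(p)$ on each factor summand saves the day); and the enlargement $\pi\to\epsilon$ must be carried out inside $t\wedge s$ (not merely $t$) so that the comparison $T(\epsilon)=T(p)\Rightarrow\epsilon\sim p$ is legitimate in $sVs$. What your route buys is an explicit witnessing complete tripotent $e\in N$ for each individual $x\in B_M$; what it costs is all of the trace bookkeeping, which the paper's decision to work inside $W=tVt$ from the outset renders unnecessary.
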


\begin{proof}  We use some ideas from the proof of \cite[Proposition 2.8]{peralta2005new}.
 Let $W=tVt$. Then $W$ is a von Neumann algebra, a C$^*$-subalgebra of $V$ and $t$ is its unit. Set
$$u'_j=s(\varphi_j)\mbox{ for }j=1,2.$$
Both these tripotents are partial isometries in $W$ with final projection below $p$. Since $p$ is finite, by \cite[Proposition V.1.38]{Tak} these partial isometries can be extended to unitary elements $u''_1,u''_2\in W$.
Set
$$u_j=pu''_j\mbox{ for }j=1,2.$$
Then $u_1,u_2$ are partial isometries in $W$ with final projection equal to $p$. In particular, they are complete tripotents in $N$ and also in $M$.

Moreover,
$$u'_j\le u_j\mbox{ for }j=1,2,$$
where we use the standard order on tripotents. Indeed, it is enough to observe that
$$\J {u_j'}{u_j}{u_j'}= u_j'u_j^*u_j'=u_j' (u_j'')^*pu_j'=
u_j'(u_j'')^* p_f(u_j')u_j'=u_j'(u_j')^*u_j'=u_j'.$$
Further, define functionals
$\zeta_j\in W_*$ by
$\zeta_j(x)=\varphi_j(u_jx)$ for $x\in W$. Clearly $\norm{\zeta_j}\le \norm{\varphi_j}$ and, moreover,
$$ \zeta_j(t)=\varphi_j(u_j t)=\varphi_j(u_j)=\norm{\varphi_j},$$
hence $\zeta_j$ is positive (and $s(\zeta_j)\le t$).\smallskip

Given $x\in M$, set $x_1=xt$ and $x_2=x(1-t)$. Note that
$$\J xx{u_j}=\frac12(xx^*u_j+u_jx^*x)=\frac12(x_1x_1^*u_j+x_2x_2^*u_j+u_jx_1^*x_1+u_jx_1^*x_2)$$
 where we used that $x_1x_2^*=x_2x_1^*=0$ and $u_jx_2^*=0$ (the initial and the final projections of $u_j$ both are below $t$). Since $\frac12u_jx_1^*x_2\in pV(1-t)\subset M_1(u_j)$, we deduce
 $$P_2(u_j) \J xx{u_j} =\frac12P_2(u_j)(x_1x_1^*u_j+x_2x_2^*u_j+u_jx_1^*x_1)$$
 Using the fact that $s(\varphi_j)=u_j'\le u_j$ we infer that
 $$\begin{aligned}\norm{x}&_{\varphi_1,\varphi_2}^2=
 \frac12\varphi_1(x_1x_1^*u_1+x_2x_2^*u_1+u_1x_1^*x_1)+
 \frac12\varphi_2(x_1x_1^*u_2+x_2x_2^*u_2+u_2x_1^*x_1)
 \\& =\frac12(\varphi_1(x_1x_1^*u_1+x_2x_2^*u_1)+\zeta_1(x_1^*x_1)+
 \varphi_2(x_1x_1^*u_2+x_2x_2^*u_2)+\zeta_2(x_1^*x_1)).
 \end{aligned}$$

By the Krein-Milman theorem and the weak$^*$-compactness of $B_M$ (and $B_N$), the supremum of this semi-norm over any of these balls equals the supremum over its extreme points, i.e., over completes tripotents. Further note that a complete tripotent in $M$ (in $N$) is a partial isometry in $V$ (in $W$) with final projection equal to $p$, i.e, an element $x\in M$ ($x\in N$) satisfying $xx^*=p$. Since for $x\in M$ we have $xx^*=x_1x_1^*+x_2x_2^*$, we have
$$\begin{aligned}
\sup &\{\norm{x}^2_{\varphi_1,\varphi_2} \setsep x\in B_N\}\le \sup\{\norm{x}^2_{\varphi_1,\varphi_2}\setsep x\in B_M\}\\&
=\sup\{\norm{x}^2_{\varphi_1,\varphi_2}\setsep x\in M, xx^*=p\}
\\&=\frac12\sup\left\{\varphi_1(pu_1)+\varphi_2(pu_2)+\zeta_1(x_1^*x_1)+\zeta_2(x_1^*x_1)\setsep  x\in M, x_1x_1^*+x_2x_2^*= p\right\}
\\&\le\frac12\sup\left\{\varphi_1(pu_1)+\varphi_2(pu_2)+\zeta_1(y^*y)+\zeta_2(y^*y)\setsep  y\in N, yy^*\le p\right\}
\\&\le\frac12\sup\left\{\varphi_1(pu_1)+\varphi_2(pu_2)+\zeta_1(y^*y)+\zeta_2(y^*y)\setsep y\in B_N\right\}
\\&=\frac12\sup\left\{\varphi_1(pu_1)+\varphi_2(pu_2)+\zeta_1(y^*y)+\zeta_2(y^*y)\setsep y\in N, yy^*= p\right\}
\\&\le \sup\{\norm{x}^2_{\varphi_1,\varphi_2}\setsep x\in B_N\},
\end{aligned}$$
where we used that $y\mapsto(\zeta_1(y^*y)+\zeta_2(y^*y))^{1/2}$ is a weak$^*$-continuous pre-hilbertian semi-norm, hence the supremum can be computed over extreme points.
\end{proof}

}

We are now in a position to present a  solution to the little Grothendieck problem for
the summand $p_2V$ from Proposition~\ref{P:representation}.

\begin{prop}\label{p pV p finite type II1} Let $V$ be a von Neumann algebra and $p\in V$ a projection such that $pVp$ is of type $II_1$.
Then for any  $\varphi_1,\varphi_2\in (pV)_*$ { the semi-norm $\norm{\cdot}_{\varphi_1,\varphi_2}$ attains its maximum on the unit ball of $pV$ and therefore} there exists a norm-one functional $\psi\in (pV)_*$ satisfying
$$\norm{x}_{\varphi_1,\varphi_2}\le   \sqrt2\cdot \sqrt{\norm{\varphi_1}+\norm{\varphi_2}}\cdot \norm{x}_\psi,\mbox{ for all } x\in pV.$$
\end{prop}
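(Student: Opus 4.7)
The strategy is to reduce to Lemma~\ref{L:max of double norm}, which already delivers max-attaining of $\norm{\cdot}_{\varphi_1,\varphi_2}$ on $B_{pV}$ \emph{when the ambient $V$ is of type $II_1$}. Here only the corner $pVp$ is of type $II_1$, so some localization is necessary, and Lemma~\ref{l 5.5 new} is the right tool. Once max-attaining is obtained, the desired functional $\psi$ is produced by Lemma~\ref{l:double NA -> estimate} applied to $M=pV$ (which is a weak$^*$-closed JB$^*$-subtriple of $V$, hence a JBW$^*$-triple).

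First I would set $u_j=s(\varphi_j)$, and denote by $q_j\le p$ the final projection and by $r_j$ the initial projection of the partial isometry $u_j$. Because $pVp$ is a finite von Neumann algebra, $p$ is finite in $V$; each $q_j\le p$ is therefore finite, and since $r_j\sim q_j$ each $r_j$ is finite as well. Let $z$ be the central support of $p$ in $V$. The pure type $II_1$ assumption on $pVp$ rules out nonzero type $I$ or type $III$ parts of $p$, so $z$ is majorized by the sum of the type $II_1$ and type $II_\infty$ central projections of $V$; in particular $zV$ is of type $II$. Since $u_j=pu_j$ and $p=pz$, both $u_j$ and $r_j=u_j^*u_j$ lie in $zV$, i.e.\ below $z$.

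Now put $t:=p\vee r_1\vee r_2$. By the Murray--von Neumann theorem, a finite supremum of finite projections is finite, so $t$ is a finite projection in $V$; moreover $t\le z$. Hence $W:=tVt$ is a reduction of the type $II$ algebra $zV$ by a finite projection, so by preservation of type under corners (cf.~\cite[Proposition V.1.21]{Tak}) $W$ is of type $II$, and finiteness of $t$ upgrades this to type $II_1$. The hypotheses of Lemma~\ref{l 5.5 new} are met ($p\le t$, $p$ finite, and $s(\varphi_j)=u_j=pu_jr_jt\in pVt$), whence
$$\sup_{x\in B_{pV}}\norm{x}_{\varphi_1,\varphi_2}=\sup_{x\in B_{pVt}}\norm{x}_{\varphi_1,\varphi_2}.$$
Identifying $pVt$ with $pW$ and observing that $u_j$ is a tripotent in $pW$ with $\varphi_j|_{pW}(u_j)=\norm{\varphi_j}=\norm{\varphi_j|_{pW}}$ (so $u_j\ge s(\varphi_j|_{pW})$ and the hilbertian seminorms agree on $pW$), I apply Lemma~\ref{L:max of double norm} to the type $II_1$ algebra $W$ and the projection $p\in W$. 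The supremum on $B_{pW}=B_{pVt}$ is then attained, and combined with the display the seminorm $\norm{\cdot}_{\varphi_1,\varphi_2}$ attains its maximum on $B_{pV}$. Lemma~\ref{l:double NA -> estimate} then yields the desired norm-one $\psi\in M_*$.

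The main obstacle lies in the third step: confirming that $W=tVt$ is genuinely of type $II_1$. This requires locating $p,r_1,r_2$ inside the type $II$ central summand of $V$ (via the central support argument), invoking Murray--von Neumann finiteness of the sup $p\vee r_1\vee r_2$, and applying preservation of type under reductions. These are all classical but need to be strung together carefully; the rest of the argument is essentially bookkeeping around the already established lemmas.
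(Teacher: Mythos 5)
Your proof is correct and follows essentially the same route as the paper's: reduce to the finite corner $W=tVt$ containing $p$ and the supports of the $\varphi_j$, show $W$ is of type $II_1$, apply Lemma~\ref{L:max of double norm} there, transfer the attainment back to $pV$ via Lemma~\ref{l 5.5 new}, and conclude with Lemma~\ref{l:double NA -> estimate}. The only (harmless) deviations are that you build $t=p\vee r_1\vee r_2$ from the initial projections of the support tripotents rather than the paper's $t=p\vee u_1pu_1^*\vee u_2pu_2^*$ obtained via the unitaries of Lemma~\ref{L:shift phi to state}, and you verify that $tVt$ is of type $II_1$ through the global type decomposition of $V$ and the central support of $p$, whereas the paper argues locally with the central carrier of $p$ in $W$ and Takesaki's comparison lemma.
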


\begin{proof}
For $j=1,2$ let $\psi_j$ be a positive functional on $pVp$ and $u_j\in V$ a unitary element provided by Lemma~\ref{L:shift phi to state} for $\varphi_j$.
Set
$$t=p\vee u_1pu_1^*\vee u_2pu_2^*$$
and $W=tVt$. Then $t$, being the supremum of three projections equivalent to $p$, is a finite projection (cf. \cite[Theorem V.1.37]{Tak}). Moreover, the central carrier (also called the central support) of $p$ in $W$ equals $t=1_W$ (just observe that if $z$ is a central projection in $W$ with $z p =0,$ then $z u_j p u_j^* = z u_j p u_j^* z =0$ for all $j=1,2$, and hence $z=0$).\smallskip

{
We claim that $W$ is of type $II_1$. Indeed, assume that $r\in W$ is a nonzero abelian projection. Since the central carrier of $p$ equals $1_W$ \cite[Lemma V.1.25]{Tak} yields a nonzero
 projections $r_1\le r$ such that $r_1\sim p$.  Since $r_1$ is abelian, $p$ is abelian, too, which contradicts the assumption that $pVp$ is of type $II_1$.}\smallskip

Moreover, for $j=1,2$ we have $s(\varphi_j)=s(\psi_j)u_j^*$, so the inital projection is
$u_js(\psi_j)u_j^*\le u_jpu_j^*\le t$, hence $s(\varphi_j)\in pVt=pW$. By Lemma~\ref{L:max of double norm} the pre-Hilbert semi-norm $\norm{\cdot}_{\varphi_1,\varphi_2}$ attains its maximum on the closed unit ball of $pVt$. We deduce from Lemma \ref{l 5.5 new} that $\norm{\cdot}_{\varphi_1,\varphi_2}$ actually attains its maximum on the closed unit ball of $pV$. Thus, by Lemma \ref{l:double NA -> estimate}, there is a norm-one functional $\psi\in (pV)_*$ such that
$$\norm{x}_{\varphi_1,\varphi_2}\le \sqrt{2}\cdot\sqrt{\norm{\varphi_1}+\norm{\varphi_2}} \cdot \norm{x}_\psi, \qquad x\in pV.$$
\end{proof}

So, we have solved the case of the summand $p_2V$ from Proposition~\ref{P:representation} and we turn our attention to the remaining summand $p_3V$.
\smallskip

Henceforth, for each natural $n$, the symbol $M_n$ will stand for the C$^*$-algebra of all $n\times n$-matrices with complex entries. Given $1\le k\le n$, we shall denote by $\U(M_n)$ the set of all unitary matrices in $M_n$, and by $P_k(M_n)$ the set of all projections of rank $k$.

\begin{lemma}\label{L:projections in Mn} The following assertions hold:
\begin{enumerate}[$(a)$]
\item Any two projections $q_1,q_2\in P_k(M_n)$  are unitarily equivalent;
\item $P_k(M_n)$ is a compact set;
\item given $r\in\P_k(M_n)$ there is a Borel measurable function $\upsilon:P_k(M_n)\to \U(M_n)$ such that
$$r=\upsilon(q)^* q \upsilon(q)\mbox{ for all }q\in P_k(M_n).$$
\end{enumerate}
\end{lemma}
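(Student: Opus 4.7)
\smallskip

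Here is my plan. I would treat the three parts independently.

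For part (a), the statement is elementary linear algebra. Given $q_1, q_2 \in P_k(M_n)$, the ranges $\operatorname{ran}(q_j)$ are $k$-dimensional subspaces of $\mathbb{C}^n$ and the ranges $\operatorname{ran}(I-q_j)$ are $(n-k)$-dimensional. I would pick an orthonormal basis of $\mathbb{C}^n$ adapted to the orthogonal decomposition $\mathbb{C}^n=\operatorname{ran}(q_j)\oplus\operatorname{ran}(I-q_j)$ for $j=1,2$, and let $u$ be the unitary sending the first basis to the second; then $u q_1 u^*=q_2$.

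For part (b), since $M_n$ is finite-dimensional, it suffices to observe that $P_k(M_n)$ is closed and bounded. Boundedness is immediate since every projection has norm at most one. For closedness, note that $P_k(M_n)$ is cut out by the polynomial conditions $p=p^*$, $p^2=p$, and $\operatorname{tr}(p)=k$ (recalling that for a projection the trace equals the rank since all eigenvalues lie in $\{0,1\}$); each of these defines a closed subset of $M_n$.

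Part (c) is the delicate step. I would fix $r\in P_k(M_n)$ and introduce the continuous map
\[
\rho:\mathcal{U}(M_n)\to P_k(M_n),\qquad \rho(u)=uru^*,
\]
which is surjective by part (a). The relation $r=\upsilon(q)^*q\upsilon(q)$ is equivalent to $\rho(\upsilon(q))=q$, so what I need is a Borel measurable right inverse of $\rho$. Consider the set-valued map $\Psi:P_k(M_n)\to 2^{\mathcal{U}(M_n)}$ defined by $\Psi(q)=\rho^{-1}(\{q\})$. Since $\mathcal{U}(M_n)$ is a compact metric (hence Polish) space and $\rho$ is continuous, each $\Psi(q)$ is a closed, non-empty subset of $\mathcal{U}(M_n)$. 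To apply the Kuratowski--Ryll--Nardzewski theorem (as was done in Lemma~\ref{l (C1)}) I need weak measurability of $\Psi$, that is, for every open $W\subseteq\mathcal{U}(M_n)$ the set $\{q\in P_k(M_n):\Psi(q)\cap W\neq\emptyset\}$ is Borel. But this set equals $\rho(W)$, and since in a metric space every open set is $F_\sigma$, and $\rho$ is continuous with compact domain, $\rho(W)$ is $F_\sigma$ and hence Borel. Invoking Kuratowski--Ryll--Nardzewski produces the desired Borel selector $\upsilon$.

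The main (and really only) obstacle is the verification of measurability in part (c); an explicit basis-by-basis construction via Gram--Schmidt would run into trouble because the eigenprojections of $q$ do not vary continuously when the spectral structure degenerates. Routing through the abstract selection theorem avoids this entirely and is in the same spirit as the earlier Lemma~\ref{l (C1)}.
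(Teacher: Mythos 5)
Your proposal is correct and follows essentially the same route as the paper: parts (a) and (b) are elementary, and for (c) you obtain the Borel selector by applying the Kuratowski--Ryll--Nardzewski theorem to the set-valued inverse of the continuous surjection $u\mapsto uru^*$ from $\mathcal{U}(M_n)$ onto $P_k(M_n)$, exactly as the paper does. The only (harmless) divergences are that you prove compactness in (b) via closed-and-bounded rather than as a continuous image of the compact group, and that you spell out the weak measurability verification that the paper leaves implicit.
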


\begin{proof}$(a)$ This is well known and easy to see.\smallskip

$(b)$ It is clear that $\U(M_n)$ is a compact set and that the mapping
$$u\mapsto uru^*,\quad u\in \U(M_n),$$
where $r\in P_k(M_n)$ is fixed, is a continuous map of $\U(M_n)$ onto $P_k(M_n)$. Thus, $P_k(M_n)$ is compact.\smallskip

$(c)$ Fix $r\in P_k(M_n)$ and consider the continuous mapping used in $(b)$. The inverse of this mapping admits a Borel measurable selection by the Kuratowski-Ryll-Nardzewski theorem (cf. \cite[Theorem 18.13]{aliprantisborder}). Denote the selection by $\upsilon$. Then
$$\upsilon(q) r \upsilon(q)^*=q\mbox{ for all }q\in P_k(M_n),$$
hence the assertion follows.
\end{proof}

\begin{lemma}\label{L:projections in LinftyMn}
Let $W=L^\infty(\mu,M_n)$ for a probability measure $\mu$ and $n\in\en$.
\begin{enumerate}[$(a)$]
\item An element $f\in W$ is a projection if and only if $f(\omega)$ is a projection in $M_n$ for $\mu$-almost all $\omega$;
\item Any projection $f\in W$ is unitarily equivalent to a projection $g\in W$ such that $g(\omega)\in\{0,r_1,\dots,r_{n-1},I\}$ for $\mu$-almost all $\omega$, where $r_j\in M_n$ is a fixed projection of rank $j$ for $1\le j<n$.
\end{enumerate}
\end{lemma}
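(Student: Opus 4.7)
The plan for \emph{part $(a)$} is to use that the algebraic operations in $W=L^\infty(\mu,M_n)$ (product and involution) are defined pointwise a.e. Then $f=f^*$ and $f^2=f$ hold in $W$ exactly when these equalities hold pointwise a.e., which is precisely the statement that $f(\omega)$ is a projection in $M_n$ for $\mu$-almost every $\omega$.

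For \emph{part $(b)$}, the strategy is to stratify $\Omega$ according to the pointwise rank of $f$ and then assemble a unitary $u\in W$ from the Borel selections furnished by Lemma~\ref{L:projections in Mn}$(c)$. Set
$$\Omega_k=\{\omega\in\Omega:\operatorname{rank} f(\omega)=k\},\qquad k=0,1,\dots,n.$$
By $(a)$, $f(\omega)$ is a projection a.e., so its rank coincides with its trace; hence $\omega\mapsto\operatorname{rank} f(\omega)$ is measurable and each $\Omega_k$ is measurable. For $1\le k\le n-1$ apply Lemma~\ref{L:projections in Mn}$(c)$ with reference projection $r_k$ to obtain a Borel map $\upsilon_k\colon P_k(M_n)\to\U(M_n)$ satisfying $\upsilon_k(q)^*\,q\,\upsilon_k(q)=r_k$ for every $q\in P_k(M_n)$; for $k=0$ and $k=n$ set $\upsilon_k\equiv I$, noting that the same identity then holds trivially with the conventions $r_0=0$, $r_n=I$. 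Define
$$u(\omega)=\upsilon_k(f(\omega))\qquad\text{for }\omega\in\Omega_k.$$
Then $u$ is measurable (being measurable on each stratum $\Omega_k$), takes unitary values a.e., and thus defines a unitary element of $W$. A pointwise computation yields $(u^*fu)(\omega)=r_k$ for $\omega\in\Omega_k$, so $g:=u^*fu$ has the prescribed block-constant form.

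The only substantive ingredient is the existence of the Borel measurable conjugating unitary on each rank stratum, which is exactly Lemma~\ref{L:projections in Mn}$(c)$ (resting on Kuratowski--Ryll-Nardzewski applied to the continuous surjection $v\mapsto v r_k v^*$ from $\U(M_n)$ onto the compact set $P_k(M_n)$). With that tool in hand, passing from $M_n$ to the $L^\infty$-valued setting is just bookkeeping: the strata $\Omega_k$ are measurable, measurability of $u$ glues together across them, and the unitary equivalence is realized pointwise fiber by fiber.
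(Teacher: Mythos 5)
Your proposal is correct and follows essentially the same route as the paper: stratify $\Omega$ by the pointwise rank of $f$, use the Borel selection from Lemma~\ref{L:projections in Mn}$(c)$ on each stratum to build a unitary $u\in W$, and set $g=u^*fu$. The only (immaterial) difference is that you justify measurability of the rank strata via the trace, whereas the paper invokes compactness of $P_k(M_n)$ from Lemma~\ref{L:projections in Mn}$(b)$.
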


\begin{proof}$(a)$ This assertion follows immediately from definitions.\smallskip

$(b)$ Let $f\in W$ be a projection. For $k\in\{0,\dots,n\}$ let
$$A_k=\{\omega\setsep \dim\ran f(\omega)=k\}.$$
By Lemma~\ref{L:projections in Mn}$(b)$ each $A_k$ is $\mu$-measurable, being a preimage of a compact set. Further, for each $k\in\{1,\dots,n\}$ let $\upsilon_k:P_k(M_n)\to\U(M_n)$ be the mapping provided by Lemma~\ref{L:projections in Mn}$(c)$ for the projection $r_k$. Set
$$u(\omega)=\begin{cases} I & \omega\in A_0\cup A_n,\\ \upsilon_k(\omega) & \omega\in A_k, 0<k<n.
\end{cases}$$
Then $u$ is a unitary element of $W$ and $g=u^*fu$ is a projection satisfying the required properties.
\end{proof}

\begin{lemma}\label{L:representation pLinftyMn}
Let $W=L^\infty(\mu,M_n)$ for a probability measure $\mu$ and $n\in\en$. Let $p\in W$ be a projection.
Then the JB$^*$-triple $pW$ is JB$^*$-triple isomorphic to
$$\bigoplus_{1\le k\le n}^{\ell_\infty} L^\infty(\mu_k, r_kM_n),$$
where $\mu_k$ is a finite non-negative measure and $r_k\in M_n$ is a projection of rank $k$ for each $k\in\{1,\dots,n\}$.
\end{lemma}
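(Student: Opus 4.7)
The plan is first to reduce $p$ to a projection taking constant values on a measurable partition of $\Omega$, and then to decompose $pW$ along this partition. I would begin by applying Lemma~\ref{L:projections in LinftyMn}(b) to obtain a unitary $u\in W$ and a projection $g\in W$ with $g=u^{*}pu$ such that, writing $r_{0}:=0$ and $r_{n}:=I$, we have $g(\omega)\in\{r_{0},r_{1},\dots,r_{n}\}$ for $\mu$-almost every $\omega$. The left-multiplication map $\Phi\colon pW\to gW$, $\Phi(x):=u^{*}x$, is a surjective linear isometry: from $x=px$ we get $u^{*}x=g\cdot u^{*}x\in gW$, and conversely $uy=puy\in pW$ for $y\in gW$. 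A direct computation with the triple product $\{a,b,c\}=\tfrac{1}{2}(ab^{*}c+cb^{*}a)$, using $u^{*}u=uu^{*}=1$, shows that $\Phi$ is a triple homomorphism, hence a JB$^{*}$-triple isomorphism. It therefore suffices to produce the desired decomposition of $gW$.

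Next I would partition $\Omega$ into the measurable sets $A_{k}:=\{\omega\in\Omega\setsep g(\omega)=r_{k}\}$ for $k=0,1,\dots,n$, and set $\mu_{k}:=\mu|_{A_{k}}$. The sets $A_{k}$ are measurable because $g$ is essentially measurable with values in the finite set $\{r_{0},\dots,r_{n}\}\subset M_{n}$. Restriction of functions to each $A_{k}$ yields the canonical identification
$$W\;=\;\bigoplus_{0\le k\le n}^{\ell_{\infty}}L^{\infty}(\mu_{k},M_{n})$$
of von Neumann algebras, hence of JBW$^{*}$-triples, under which $g$ corresponds to the tuple whose $k$-th coordinate is the constant function $r_{k}$. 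Since $gW=\{y\in W\setsep gy=y\}$, the $k$-th coordinate of an element of $gW$ is characterized by $r_{k}y(\omega)=y(\omega)$ $\mu_{k}$-a.e., i.e.\ $y(\omega)\in r_{k}M_{n}$ a.e. This gives
$$gW\;=\;\bigoplus_{0\le k\le n}^{\ell_{\infty}}L^{\infty}(\mu_{k},r_{k}M_{n}),$$
and the trivial summand for $k=0$ can be dropped to obtain the stated form.

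The only genuinely nontrivial ingredients are Lemma~\ref{L:projections in LinftyMn}(b), which is already available, and the verification that the left-multiplication map $\Phi$ is a triple homomorphism. The latter is the main place where one must be careful, but since $W$ is an associative $*$-algebra and $u$ is unitary the check is routine. Apart from this, the argument is essentially bookkeeping across the measure-space decomposition, so I expect no serious obstacle.
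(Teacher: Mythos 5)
Your proof is correct and follows essentially the same route as the paper: reduce to a pointwise constant-rank projection $g$ via Lemma~\ref{L:projections in LinftyMn}(b), then split $gW$ along the level sets $A_k$ and discard the trivial $k=0$ summand. The only difference is that you spell out the triple isomorphism $pW\cong gW$ (via $x\mapsto u^*x$) that the paper leaves implicit, and that verification is indeed routine and correct.
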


\begin{proof} For each $k\in \{0,\dots,n\}$ let $r_k\in M_n$ be a projection of rank $k$ (note that $r_0=0$ and $r_n=I$).
By Lemma~\ref{L:projections in LinftyMn} $p$ is unitarily
equivalent to a projection $g$ such that $g(\omega)\in\{r_0,\dots,r_n\}$ $\mu$-almost everywhere.
Then $pW$ is triple-isomorphic to $gW$. Further, for $k=0,\dots,n$ set
$$A_k=\{\omega\setsep g(\omega)=r_k\}.$$
Then
$$gW=\bigoplus_{1\le k\le n}^{\ell_\infty} L^\infty(\mu|_{A_k},r_kM_n),$$
which completes the proof.
\end{proof}

\begin{lemma}\label{L:representation type I}
Let $V$ be a finite von Neumann algebra of type $I$ and let $p\in V$ be a projection. Then the JB$^*$-triple $pV$ is JB$^*$-triple isomorphic to
$$\bigoplus_{j\in J}^{\ell_\infty} L^\infty(\mu_j,p_j M_{n_j}),$$
where $\mu_j$ is a probability measure, $n_j\in\en$ and $p_j\in M_{n_j}$ is a projection for $j\in J$.
\end{lemma}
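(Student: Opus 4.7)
The plan is to reduce, via the type decomposition of $V$ together with a decomposition of the centre of each type $I_n$ summand into probability-measure pieces, to the setting already handled by Lemma~\ref{L:representation pLinftyMn}.

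First, since $V$ is a finite von Neumann algebra of type $I$, the standard type decomposition (cf.\ \cite[Theorem V.1.27]{Tak}) yields a family $(z_n)_{n\in\en}$ of mutually orthogonal central projections of $V$ with $\sum_n z_n=1$ such that $z_nV$ is of type $I_n$ for every $n$. By the structure theory of type $I$ von Neumann algebras, each summand $z_nV$ is $*$-isomorphic to an algebra of the form $L^\infty(Y_n,M_n)$ where $L^\infty(Y_n)$ is the centre of $z_nV$. Because every commutative von Neumann algebra can be written as an $\ell_\infty$-direct sum $\bigoplus_{i\in I_n}^{\ell_\infty} L^\infty(\nu_{n,i})$ with each $\nu_{n,i}$ a probability measure (obtained by choosing a maximal orthogonal family of $\sigma$-finite central projections and normalizing a faithful normal state on each piece), one arrives at
$$z_nV\;\cong\;\bigoplus_{i\in I_n}^{\ell_\infty} L^\infty(\nu_{n,i},M_n).$$

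Concatenating across $n$ produces a single family $(z_{n,i})$ of mutually orthogonal central projections of $V$ summing to $1$, each satisfying $z_{n,i}V\cong L^\infty(\nu_{n,i},M_n)$. Since each $z_{n,i}$ is central in $V$, the projections $z_{n,i}p$ are central in $pVp$, mutually orthogonal, and sum to $p$, so Lemma~\ref{L:pV decomposition} gives
$$pV\;=\;\bigoplus_{n,i}^{\ell_\infty}(z_{n,i}p)V\;\cong\;\bigoplus_{n,i}^{\ell_\infty} q_{n,i}\,L^\infty(\nu_{n,i},M_n),$$
where $q_{n,i}$ denotes the image of $z_{n,i}p$ under the identification $z_{n,i}V\cong L^\infty(\nu_{n,i},M_n)$. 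Applying Lemma~\ref{L:representation pLinftyMn} to each such summand now rewrites it as a finite $\ell_\infty$-sum of pieces of the form $L^\infty(\mu_{n,i,k},r_{n,k}M_n)$, where $r_{n,k}\in M_n$ is a projection of rank $k$; the measures produced there are a priori only finite non-negative, but normalizing (and discarding the zero pieces) replaces them by probability measures without changing the corresponding $L^\infty$-space. Collecting the resulting triply-indexed family into a single index set $J$ yields the asserted representation.

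The only genuinely subtle point I anticipate is the assertion that the centre of each type $I_n$ summand admits a decomposition as an $\ell_\infty$-sum of $L^\infty$'s of \emph{probability} measures, rather than merely of a localizable measure; this is classical but deserves explicit care with references. Once that is in place, the rest is a bookkeeping assembly of Lemmas~\ref{L:pV decomposition} and~\ref{L:representation pLinftyMn}.
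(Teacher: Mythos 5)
Your proof is correct and follows essentially the same route as the paper's: both decompose $V$ by an orthogonal family of central projections into $\sigma$-finite homogeneous type $I_n$ pieces isomorphic to $L^\infty(\nu,M_n)$ with $\nu$ a probability measure, cut by $p$ (via Lemma~\ref{L:pV decomposition}), and conclude with Lemma~\ref{L:representation pLinftyMn}. Your explicit remark about normalizing the finite non-negative measures produced by that lemma into probability measures is a detail the paper leaves implicit, and you handle it correctly.
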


\begin{proof}
By combining \cite[Theorem V.1.27]{Tak} and \cite[Corollary V.2.9]{Tak} we get an orthogonal family $(z_\alpha)_{\alpha\in\Lambda}$ of central projections in $V$ with sum equal to $1$ such that
$z_\alpha V$ is isomorphic to $A_\alpha\overline{\otimes} M_{n_\alpha}$, where $A_\alpha$ is a $\sigma$-finite abelian von Neumann algebra and $n_\alpha\in\en$ for $\alpha\in\Lambda$. Each $A_\alpha$, being $\sigma$-finite, is isomorphic to $L^\infty(\mu_\alpha)$ for some probability measure $\mu_\alpha$. Thus $\displaystyle pV=\bigoplus_{\alpha\in \Lambda}pz_\alpha V$ is isomorphic to
$$\bigoplus_{\alpha\in\Lambda}^{\ell_\infty} z_\alpha p L^\infty(\mu_\alpha,{M}_{n_\alpha}).$$
We conclude by applying Lemma~\ref{L:representation pLinftyMn} to each summand.
\end{proof}

The following proposition solves the case of the summand $p_3V$ from Proposition~\ref{P:representation}.

\begin{prop}\label{p pV with p finite and type I} Let $V$ be a von Neumann algebra and $p\in V$ a finite projection such that $pVp$ is of type $I$.
Then for any normal functionals $\varphi_1,\varphi_2\in (pV)_*$ { the semi-norm $\norm{\cdot}_{\varphi_1,\varphi_2}$ attains its maximum on the unit ball of $pV$ and therefore} there exists a norm-one functional $\psi\in (pV)_*$ satisfying
$$\norm{x}_{\varphi_1,\varphi_2}\le {\sqrt{2}\cdot\sqrt{\norm{\varphi_1}+\norm{\varphi_2}}}\cdot \norm{x}_\psi\mbox{ for }x\in pV.$$
\end{prop}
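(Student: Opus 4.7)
The proof should follow the same template as Proposition~\ref{p pV p finite type II1}, with the Type-$II_1$ structural ingredient replaced by a Type-$I$ one, and with the new finite-dimensional Cartan-factor input supplied by Proposition~\ref{p LG Linfty mu C}. The core goal is to show that the seminorm $\norm{\cdot}_{\varphi_1,\varphi_2}$ attains its maximum on the closed unit ball of $pV$; the final inequality then drops out of Lemma~\ref{l:double NA -> estimate}.

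First I would reduce to a finite ambient algebra. Apply Lemma~\ref{L:shift phi to state} to each $\varphi_j$ ($j=1,2$) to extract a positive functional $\psi_j$ on $pVp$ and a unitary $u_j\in V$ with $s(\varphi_j)=s(\psi_j)u_j^*$. Set $t=p\vee u_1pu_1^*\vee u_2pu_2^*$ and $W=tVt$. By \cite[Theorem V.1.37]{Tak} the supremum of three projections equivalent to the finite projection $p$ is finite, so $t$ is finite and $W$ is a finite von Neumann algebra; exactly as in the proof of Proposition~\ref{p pV p finite type II1}, the central carrier of $p$ in $W$ equals $t$. The final projection of $s(\varphi_j)$ is $s(\psi_j)\le p$ and the initial projection is $u_j s(\psi_j) u_j^*\le u_jpu_j^*\le t$, so $s(\varphi_j)\in pVt=pW$.

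Next I would check that $W$ is of Type $I$. Since $W$ is finite it cannot have a Type-$III$ part, so it is enough to rule out a Type-$II_1$ summand. If $z\in Z(W)$ were a nonzero central projection with $zW$ of Type $II_1$, then $zp\ne 0$ (because the central carrier of $p$ in $W$ is $t$), and $zp$ is a nonzero central projection of $pVp$. Since $pVp$ is of Type $I$, the corner $(zp)(pVp)$ admits a nonzero abelian projection $e$; but then $eVe$ is commutative, and consequently $e(zW)e=zeVe$ is commutative, so $e$ is an abelian projection of $zW$, contradicting the Type-$II_1$ hypothesis. Thus $W$ is a finite Type-$I$ von Neumann algebra.

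Now I can apply Lemma~\ref{L:representation type I} to $W$ and its projection $p$ to obtain a triple isomorphism
$$pW\cong\bigoplus_{k\in J}^{\ell_{\infty}} L^\infty(\mu_k,p_k M_{n_k}),$$
and write each $\varphi_j$ as $(\varphi_{j,k})_k$ in the corresponding predual. Proposition~\ref{p LG Linfty mu C} applied with $C=p_kM_{n_k}$ (a finite-dimensional JB$^*$-triple) supplies, for each $k$, an element $f_k$ in the unit ball of the $k$-th summand at which $\norm{\cdot}_{\varphi_{1,k},\varphi_{2,k}}$ attains its maximum $m_k$. By \eqref{eq semi-norm inequality} one has $\sum_k m_k^2\le\norm{\varphi_1}+\norm{\varphi_2}<\infty$, so $(f_k)_k$ is an element of the unit ball of $pW$, and the identity $\norm{(x_k)}_{\varphi_1,\varphi_2}^2=\sum_k\norm{x_k}_{\varphi_{1,k},\varphi_{2,k}}^2$ shows that it realizes the maximum of $\norm{\cdot}_{\varphi_1,\varphi_2}$ on the unit ball of $pW=pVt$. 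Lemma~\ref{l 5.5 new} transfers this attainment to the unit ball of $pV$, and Lemma~\ref{l:double NA -> estimate} then produces the required norm-one functional $\psi\in(pV)_*$.

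The main obstacle is the structural reduction in the first two steps, namely the verification that the enlarged finite ambient algebra $W=tVt$ inherits Type $I$ from $pVp$ via the central-carrier argument. Once this is set, the $\ell_\infty$-decomposition reduces matters to the already-solved $L^\infty(\mu,C)$ case, and the simultaneous attainment on the $\ell_\infty$-sum is free because the squared maxima are automatically summable.
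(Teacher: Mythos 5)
Your proposal is correct and follows essentially the same route as the paper: reduce to the finite algebra $W=tVt$ via Lemma~\ref{L:shift phi to state}, verify $W$ is of type $I$, decompose $pW$ by Lemma~\ref{L:representation type I}, obtain attainment summand-wise from Proposition~\ref{p LG Linfty mu C}, and conclude via Lemmas~\ref{l 5.5 new} and~\ref{l:double NA -> estimate}. The only (immaterial) difference is in checking that $W$ is of type $I$: you rule out a type $II_1$ central summand by exhibiting an abelian projection there, whereas the paper shows directly, via the comparison lemma and the fact that the central carrier of $p$ in $W$ is $t$, that every nonzero projection of $W$ dominates a nonzero abelian projection; both arguments are valid.
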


\begin{proof}
For $j=1,2$ let $\psi_j$ be a positive functional on $pVp$ and $u_j\in V$ a unitary element provided by Lemma~\ref{L:shift phi to state} for $\varphi_j$.
Set
$$t=p\vee u_1pu_1^*\vee u_2pu_2^*$$
and $W=tVt$. Then $t$, being the supremum of three projections equivalent to $p$, is a finite projection. Moreover, the central carrier of $p$ in $W$ equals $t=1_W$.\smallskip

We claim that $W$ is of type $I$.
Indeed, assume that $r\in W$ is a nonzero  projection. Since the central carrier of $p$ equals $1_W$, \cite[Lemma V.1.7]{Tak} yields that there are two nonzero projections $r_1\le r$ and $p_1\le p$ such that $r_1\sim p_1$. Since $pVp$ is of type $I$, there is a nonzero abelian projection $p_2\le p_1$. Then there is a projection $r_2\le r_1$ equivalent to $p_2$. Therefore $r_2$ is abelian and $r_2\le r_1\le r$, which completes the proof of the claim.\smallskip

Moreover, for $j=1,2$ we have $s(\varphi_j)=s(\psi_j)u_j^*$, so the inital projection is
$u_js(\psi_j)u_j^*\le u_jpu_j^*\le t$, hence $s(\varphi_j)\in pVt=pW$. {\ By Lemma~\ref{L:representation type I} $pW = p W t $ is JB$^*$-triple isomorphic to
$\displaystyle \bigoplus_{j\in J}^{\ell_\infty} L^\infty(\mu_j,p_j M_{n_j}),$
where $\mu_j$ is a probability measure, $n_j\in\en$ and $p_j\in M_{n_j}$ is a projection for $j\in J$. For each $j\in J$, let $\varphi_{1,j} = \varphi_1|_{L^\infty(\mu_j,p_j M_{n_j})}$ and $\varphi_{2,j} = \varphi_2|_{L^\infty(\mu_j,p_j M_{n_j})}$. Proposition \ref{p LG Linfty mu C} assures that the pre-Hilbert semi-norm $\|.\|_{\varphi_1,\varphi_2}|_{L^\infty(\mu_j,p_j M_{n_j})} = \|.\|_{\varphi_{1,j},\varphi_{2,j}}$ attains its maximum on the closed unit ball of $L^\infty(\mu_j,p_j M_{n_j})$
{at some point $x_j$. It follows
  that the semi-norm $\|.\|_{\varphi_1,\varphi_2}$ attains its maximum on the closed unit ball of $p W = p V t$ at the point $(x_j)_{j\in J}$.} We can therefore apply Lemma \ref{l 5.5 new} to deduce that $\|.\|_{\varphi_1,\varphi_2}$ attains its maximum on the closed unit ball of $p V $. Finally, Lemma \ref{l:double NA -> estimate} yields a norm-one functional $\psi\in (pV)_*$
such that
$$\norm{x}_{\varphi_1,\varphi_2}\le \sqrt{2} \sqrt{\norm{\varphi_1}+\norm{\varphi_2}}\, \norm{x}_\psi, \hbox{ for all } x\in pV.$$}
\end{proof}

\section{Proof of Grothendieck's inequalities for JB$^*$-triples} 

Now we are ready to prove the Barton-Friedman conjecture. We start by restating and proving the little Grothendieck inequality given in Theorem~\ref{ttt solution to LG problem JBW*-triples}.

{
\begin{thm}\label{tt solution to LG problem JBW*-triples} Let $M$ be a JBW$^*$-triple. Then given any two functionals $\varphi_1,\varphi_2$ in $M_*$, there exists a norm-one functional $\psi\in M_*$ such that
$$\norm{x}_{\varphi_1,\varphi_2}\le {\sqrt2\cdot\sqrt{\norm{\varphi_1}+\norm{\varphi_2}}}\cdot \norm{x}_\psi,$$ for all $x\in M.$ Furthermore, given $K> { 2}$, for every complex Hilbert space $H$, and every weak$^*$-to-weak continuous linear operator $T:M \rightarrow {H}$, there exists a norm-one functional $\psi\in M_{*}$ satisfying  $$\|T(x)\| \leq K \, \|T\| \, \|x\|_{\psi}$$  for all $x\in
M$.
\end{thm}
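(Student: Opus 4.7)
The plan is to proceed in two stages: first I would establish the seminorm estimate, and then derive the operator inequality from it.

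For the first (seminorm) estimate, the strategy is to exploit the structural decomposition furnished by Proposition~\ref{P:representation}, which writes $M$ (up to JB$^*$-triple isomorphism) as the $\ell_\infty$-sum
$$ \left( \bigoplus_{k\in\Lambda}^{\ell_\infty} L^\infty(\mu_k, C_k) \right) \oplus^{\ell_\infty} N \oplus^{\ell_\infty} p_1 V \oplus^{\ell_\infty} p_2 V \oplus^{\ell_\infty} p_3 V $$
of five kinds of summands. For each of these, the desired estimate with constant $\sqrt{2}\sqrt{\|\varphi_1\|+\|\varphi_2\|}$ has already been established in the three preceding sections: Proposition~\ref{p LG Linfty mu C} covers the summands $L^\infty(\mu_k, C_k)$; Corollary~\ref{c LG when Peirce-2 are upward directed} handles both the JBW$^*$-algebra $N$ and the properly infinite case $p_1V$, since in both of these Peirce-2 subspaces of tripotents are upward directed; Proposition~\ref{p pV p finite type II1} treats the type $II_1$ summand $p_2V$; and Proposition~\ref{p pV with p finite and type I} treats the finite type $I$ summand $p_3V$. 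Proposition~\ref{p stability of the LG under ellinfty sums precise}, applied with $G = \sqrt{2}$, then glues these summand-wise estimates to produce a single norm-one functional $\psi \in M_*$ satisfying the required inequality on all of $M$.

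For the operator estimate, I would combine this first part with the perturbed little Grothendieck inequality of Theorem~\ref{t little complex w*}. Given $K > 2$, I would first pick $K' > \sqrt{2}$ and $\varepsilon > 0$ small enough that $K' \sqrt{2(1+\varepsilon)} < K$; this is possible because the product $K'\sqrt{2(1+\varepsilon)}$ tends to $2$ as $K' \downarrow \sqrt{2}$ and $\varepsilon \downarrow 0$. Applying Theorem~\ref{t little complex w*} with these parameters yields norm-one functionals $\varphi_1, \varphi_2 \in M_*$ such that
$$ \|T(x)\| \le K' \|T\| \sqrt{\|x\|_{\varphi_1}^2 + \varepsilon \|x\|_{\varphi_2}^2} = K' \|T\| \|x\|_{\varphi_1, \varepsilon \varphi_2} $$
for every $x \in M$. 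Applying the first estimate to the pair $(\varphi_1, \varepsilon \varphi_2)$, whose norms sum to $1+\varepsilon$, produces a norm-one functional $\psi \in M_*$ with $\|x\|_{\varphi_1, \varepsilon \varphi_2} \le \sqrt{2(1+\varepsilon)}\, \|x\|_\psi$; substitution gives $\|T(x)\| \le K' \sqrt{2(1+\varepsilon)}\, \|T\| \|x\|_\psi \le K \|T\| \|x\|_\psi$, as required.

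The main obstacle has in effect already been overcome upstream, in the five individual summand cases, each of which required its own specific techniques (the polar-decomposition reduction of Lemma~\ref{L:shift phi to state}, the center-valued trace argument for type $II_1$, the Borel-measurable selection argument for $L^\infty(\mu,C)$, and the Peirce-2-directedness argument for $N$ and $p_1V$). The two-stage argument above is then essentially bookkeeping: the weighted factor $\sqrt{\|\varphi_1\|+\|\varphi_2\|}$ deliberately built into each summand-wise estimate is exactly what allows the $\ell_\infty$-gluing of Proposition~\ref{p stability of the LG under ellinfty sums precise} to go through via a weighted convex combination of the summand-wise functionals $\psi_\alpha$.
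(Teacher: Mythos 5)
Your proposal is correct and follows essentially the same route as the paper: the seminorm estimate is assembled from Propositions~\ref{p LG Linfty mu C}, \ref{p pV p finite type II1}, \ref{p pV with p finite and type I} and Corollary~\ref{c LG when Peirce-2 are upward directed} via the decomposition of Proposition~\ref{P:representation} and the gluing Proposition~\ref{p stability of the LG under ellinfty sums precise}, and the operator estimate is then deduced from Theorem~\ref{t little complex w*} exactly as in the paper (the paper fixes the particular choice $K'=\sqrt{K}$, but your more general choice of parameters works just as well).
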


\begin{proof} The first statement follows from the results of the previous section. Indeed,
consider the decomposition of $M$ from Proposition~\ref{P:representation}. The statement for individual summands follows from  Proposition \ref{p LG Linfty mu C},
Corollary \ref{c LG when Peirce-2 are upward directed}, Proposition \ref{p pV p finite type II1}, and Proposition \ref{p pV with p finite and type I}, respectively. Finally, Proposition~\ref{p stability of the LG under ellinfty sums precise} completes the argument.
\smallskip

Let us prove the second statement. Fix $K>2$. Let $\varepsilon>0$ be such that $K>2(1+\varepsilon)$. By Theorem~\ref{t little complex w*} there are norm-one functionals $\varphi_1,\varphi_2\in M_*$ such that for any $x\in M$ we have
$$\norm{T(x)}\le \sqrt{K}\norm{T}\sqrt{\norm{x_1}_{\varphi_1}^2+\varepsilon\norm{x}_{\varphi_2}^2}=\sqrt{K}\norm{T}\norm{x}_{\varphi_1,\varepsilon\varphi_2}.$$
By the first part of the theorem we get a norm-one functional $\psi\in M_*$ such that for $x\in M$ we have
$$\norm{x}_{\varphi_1,\varepsilon\varphi_2}\le \sqrt{2}\sqrt{\norm{\varphi_1}+\norm{\varepsilon\varphi_2}}\norm{x}_\psi=\sqrt{2(1+\varepsilon)}\norm{x}_\psi.$$
By combining the two inequalities we get
$$\norm{T(x)}\le \sqrt{2(1+\varepsilon)K} \norm{x}_\psi\le K\norm{x}_\psi$$
for $x\in M$.
This completes the proof.\end{proof}
}

Given a bounded linear operator $T$ from a JB$^*$-triple $E$ into a complex Hilbert space $H$ we can always consider its bitranspose $T^{**}: E^{**}\to H$, which is a weak$^*$-to-weak continuous linear operator from a JBW$^*$-triple into a complex Hilbert space. We therefore arrive, via Theorem \ref{tt solution to LG problem JBW*-triples}, to a proof of the little Grothendieck inequality with one control functional.

\begin{thm}\label{t solution to LG problem JB*-triples} Let $E$ be a JB$^*$-triple, $H$ a complex Hilbert space, and $K>{ 2}$. Then for every bounded linear operator $T:E \rightarrow {H}$, there exists a norm-one functional $\psi\in E^*$ satisfying  $$\|T(x)\| \leq K \, \|T\| \, \|x\|_{\psi}$$  for all $x\in E$.$\hfill\Box$
\end{thm}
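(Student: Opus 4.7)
The plan is to reduce to the JBW$^*$-triple setting by passing to the bitranspose. Given $T:E\to H$, consider the bitranspose $T^{**}:E^{**}\to H^{**}=H$ (the last equality since $H$ is reflexive). Recall from the preliminaries that $E^{**}$ is a JBW$^*$-triple whose triple product extends the one on $E$, and $\|T^{**}\|=\|T\|$. Moreover, $T^{**}$ is always weak$^*$-to-weak$^*$ continuous, and since $H$ is reflexive the weak$^*$ and weak topologies on $H$ coincide, so $T^{**}$ is weak$^*$-to-weak continuous. Thus $T^{**}$ fits the hypotheses of Theorem~\ref{tt solution to LG problem JBW*-triples}.

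Next I need to identify the predual of $E^{**}$ with $E^{*}$. Since JBW$^*$-triples admit a unique isometric predual (as recalled in Subsection~\ref{S:1.1}), and $E^{*}$ certainly provides an isometric predual for $E^{**}$ via the canonical embedding of $E$ into $E^{**}$, the natural identification $(E^{**})_{*}=E^{*}$ holds. Under this identification, for $\psi\in E^{*}$ the support tripotent $s(\psi)\in E^{**}$ and the pre-Hilbert semi-norm $\|\cdot\|_{\psi}$ on $E^{**}$ defined from $\psi$ viewed as an element of $(E^{**})_{*}$ restrict on $E\subseteq E^{**}$ to the pre-Hilbert semi-norm on $E$ associated with $\psi\in E^{*}$ (this is precisely how $\|\cdot\|_{\psi}$ on $E$ was defined from $\varphi\in E^{*}$ in the Introduction, using a norm-one element $z\in E^{**}$ with $\varphi(z)=1$, which one can take to be $s(\psi)$).

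Having these identifications in place, fix $K>2$ and apply Theorem~\ref{tt solution to LG problem JBW*-triples} to the weak$^*$-to-weak continuous operator $T^{**}:E^{**}\to H$. This yields a norm-one functional $\psi\in(E^{**})_{*}=E^{*}$ such that
$$\|T^{**}(x)\|\le K\,\|T^{**}\|\,\|x\|_{\psi}=K\,\|T\|\,\|x\|_{\psi}\qquad\text{for all }x\in E^{**}.$$
Restricting this inequality to $x\in E\subseteq E^{**}$ and using $T^{**}|_{E}=T$ together with the compatibility of the semi-norm noted above, we obtain $\|T(x)\|\le K\,\|T\|\,\|x\|_{\psi}$ for every $x\in E$, as required.

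Since the hard work has already been carried out in Theorem~\ref{tt solution to LG problem JBW*-triples}, there is no serious obstacle here; the only point that deserves a line of justification is the identification $(E^{**})_{*}=E^{*}$ and the compatibility of the pre-Hilbert semi-norms, both of which are standard consequences of the uniqueness of the predual of a JBW$^*$-triple and of the fact that the triple product on $E^{**}$ extends that on $E$.
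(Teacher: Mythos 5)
Your proposal is correct and follows exactly the paper's route: the paper also deduces this theorem by passing to the bitranspose $T^{**}:E^{**}\to H$, noting it is weak$^*$-to-weak continuous, and applying Theorem~\ref{tt solution to LG problem JBW*-triples}. The extra details you supply (the identification $(E^{**})_*=E^*$ and the compatibility of the semi-norms on $E$ and $E^{**}$) are the standard points the paper leaves implicit.
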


The previous Theorems \ref{tt solution to LG problem JBW*-triples} and \ref{t solution to LG problem JB*-triples} restore the equilibrium and the validity of original statements concerning the little Grothendieck inequality in the case of JB$^*$-triples in \cite{barton1987grothendieck,chu1989grothendieck}. It also provides a complete solution to \cite[Problem 5.10.131]{Cabrera-Rodriguez-vol2}, \cite[Remark 3]{peralta2001grothendieck}, and \cite[Remark 8.3]{pisier2012grothendieck}. We shall next trace back the original sources to see how our results can be also employed to provide a complete proof to the Barton--Friedman conjecture concerning Grothendieck's inequality for bilinear forms on JB$^*$-triples.

\begin{thm}\label{t big Grothendieck inequ JBW} Suppose $G > {8 (1+2\sqrt{3})}$. Let $M$ and $N$ be JBW$^*$-triples. Then for every separately
weak$^*$-continuous bilinear form $V:M\times N\to \mathbb{C}$ there exist norm-one functionals $\varphi\in M_{*}$ and $\psi\in N_{*}$ satisfying
$$|V(x,y)| \leq G \ \|V\| \, \|x\|_{\varphi} \, \|y\|_{\psi}$$ for all $(x,y)\in M \times N$.
\end{thm}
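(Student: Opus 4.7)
The plan is to obtain Theorem~\ref{t big Grothendieck inequ JBW} by combining the perturbed form of the big Grothendieck inequality already available from the earlier work of Peralta and collaborators with the new (unperturbed) little Grothendieck inequality of Theorem~\ref{tt solution to LG problem JBW*-triples}, which finally collapses the pair of control functionals on each side into a single norm-one functional.

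Concretely, the first step is to invoke the perturbed big Grothendieck inequality for separately weak$^*$-continuous bilinear forms on JBW$^*$-triples. This is the refinement of \cite[Theorem 6]{peralta2001grothendieck} obtained by carrying the perturbation parameter $\varepsilon$ through the argument (the $\varepsilon$ is the same one appearing in Theorem~\ref{t little complex w*}, which is used as an intermediate step in the proof). It yields, for every $\varepsilon>0$ and every $K_0>4(1+2\sqrt{3})$, norm-one normal functionals $\varphi_1,\varphi_2\in M_*$ and $\psi_1,\psi_2\in N_*$ with
$$|V(x,y)|\le K_0\,\|V\|\,\sqrt{\|x\|_{\varphi_1}^2+\varepsilon\|x\|_{\varphi_2}^2}\,\sqrt{\|y\|_{\psi_1}^2+\varepsilon\|y\|_{\psi_2}^2}$$
for all $(x,y)\in M\times N$. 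In the notation used throughout the paper, the right-hand side is exactly $K_0\|V\|\,\|x\|_{\varphi_1,\varepsilon\varphi_2}\,\|y\|_{\psi_1,\varepsilon\psi_2}$, since $\|x\|_{\varepsilon\varphi_2}^2=\varepsilon\|x\|_{\varphi_2}^2$.

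The second step is to apply Theorem~\ref{tt solution to LG problem JBW*-triples} to each of the pairs $(\varphi_1,\varepsilon\varphi_2)$ and $(\psi_1,\varepsilon\psi_2)$. Since $\|\varphi_1\|+\|\varepsilon\varphi_2\|=1+\varepsilon$ (and similarly on the $N$-side), one obtains norm-one functionals $\varphi\in M_*$ and $\psi\in N_*$ satisfying
$$\|x\|_{\varphi_1,\varepsilon\varphi_2}\le\sqrt{2(1+\varepsilon)}\,\|x\|_\varphi,\qquad \|y\|_{\psi_1,\varepsilon\psi_2}\le\sqrt{2(1+\varepsilon)}\,\|y\|_\psi$$
for all $x\in M$ and $y\in N$. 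Substituting these bounds into the perturbed inequality gives
$$|V(x,y)|\le 2K_0(1+\varepsilon)\,\|V\|\,\|x\|_\varphi\,\|y\|_\psi.$$
Since $G>8(1+2\sqrt{3})$ is given, one first fixes $K_0>4(1+2\sqrt{3})$ close enough to this value and then chooses $\varepsilon>0$ small enough that $2K_0(1+\varepsilon)\le G$; this yields the desired conclusion.

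The main obstacle is the first step: verifying that the perturbed big Grothendieck inequality is available precisely in the $\varepsilon$-parametrized form used above, with the constant $4(1+2\sqrt{3})$ remaining stable as $\varepsilon\to 0$. This is not literally the printed statement of \cite[Theorem 6]{peralta2001grothendieck}, but it is produced by the same proof since the parameter $\varepsilon$ enters only through the invocation of the perturbed little Grothendieck (Theorem~\ref{t little complex w*}), while the Barton--Friedman-type machinery contributes the $\varepsilon$-independent factor $4(1+2\sqrt{3})$. A secondary, routine point is that \cite{peralta2001grothendieck} treats general JB$^*$-triples rather than JBW$^*$-triples with separately weak$^*$-continuous bilinear forms; one passes to biduals and uses the separate weak$^*$-continuity of $V$ to descend the conclusion back to $M\times N$ without any loss in the constants.
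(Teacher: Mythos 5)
Your proof is correct and reaches the same constant $8(1+2\sqrt{3})$, but it assembles the ingredients in a different order from the paper. The paper re-derives the Hilbert-space factorization explicitly: it writes $V(x,y)=\langle T(x),S^{*}(y)\rangle$ with $\|T\|\,\|S\|\le 2(1+2\sqrt{3})\|V\|$ via \cite[Lemma 5]{chu1989grothendieck} (together with the refinement from the proof of \cite[Theorem 6]{peralta2001grothendieck} that $T$ may be taken weak$^*$-to-weak continuous), and then applies the \emph{operator} part of Theorem~\ref{tt solution to LG problem JBW*-triples} with constant $\tilde G>2$ to each of $T$ and $S^{*}$, obtaining $\tilde G^{2}\cdot 2(1+2\sqrt{3})>8(1+2\sqrt{3})$. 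You instead take the factorization plus the perturbed little Grothendieck inequality as a black box --- the $\varepsilon$-parametrized form of \cite[Theorem 6]{peralta2001grothendieck} --- and then apply only the \emph{seminorm-collapsing} part of Theorem~\ref{tt solution to LG problem JBW*-triples} to the pairs $(\varphi_1,\varepsilon\varphi_2)$ and $(\psi_1,\varepsilon\psi_2)$. Since the operator part of Theorem~\ref{tt solution to LG problem JBW*-triples} is itself proved by composing Theorem~\ref{t little complex w*} with exactly that collapsing estimate, the two arguments are the same computation performed in a different order, and your bookkeeping $2K_0(1+\varepsilon)\downarrow 8(1+2\sqrt{3})$ is right. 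The step you flag as the main obstacle is in fact unproblematic: the cited theorem of Peralta and Rodr{\'\i}guez-Palacios is stated for separately weak$^*$-continuous bilinear forms on JBW$^*$-triples, produces \emph{normal} functionals, and carries the perturbation parameter explicitly with the constant $4(1+2\sqrt{3})$ independent of $\varepsilon$, so both your first step and the descent-to-preduals remark are covered by the literal citation. What the paper's arrangement buys is that the whole bilinear theorem becomes a two-line consequence of the standalone one-functional little Grothendieck inequality, making the relation ``big constant $=2(1+2\sqrt{3})\times(\text{little constant})^{2}$'' transparent; what yours buys is that you never have to reopen the factorization argument.
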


\begin{proof} Thanks to our previous Theorem \ref{tt solution to LG problem JBW*-triples} we can recover a trick from \cite[Theorem 6]{chu1989grothendieck} and \cite[Remark 3]{peralta2001grothendieck}. A brief argument is included here for completeness reasons. Let us find a weak$^*$-to-weak continuous linear operator $R: M\to N_*$ defined by $V(a,b) = \langle R(a) , b\rangle$ ($(a,b)\in M\times N$). Clearly $\|R\|\leq \|V\|$. By \cite[Lemma 5]{chu1989grothendieck} $R$ factors through a complex Hilbert space, more precisely, there exists a complex Hilbert
space $H$ and bounded linear operators $T: M \rightarrow H$, $S: H \rightarrow N_{*}$ satisfying $R = S \circ T$ and
$\|T\|\ \|S\| \leq 2(1+2\sqrt{3})\ \|R\|$. It is further shown in the proof of \cite[Theorem 6]{peralta2001grothendieck} that we can choose $H$ in such a way that $S$ is injective and $T$ is weak$^*$-to-weak continuous.\smallskip

Let $\displaystyle \tilde{G} = \left(\frac{G}{2 (1+2\sqrt{3})} \right)^{\frac12} >  2.$ By applying Theorem \ref{tt solution to LG problem JBW*-triples} to the weak$^*$-to-weak continuous linear operators $T: M\to H$ and $S^* : N\to H$ we find two norm-one functionals $\varphi\in M_{*}$ and $\psi\in N_{*}$ satisfying $$\|T(x)\| \leq \tilde{G} \, \|T\| \, \|x\|_{\varphi}, \hbox{ and } \|S^*(y)\| \leq \tilde{G} \, \|S^*\| \, \|y\|_{\psi}$$  for all $(x,y)\in M\times N$. We therefore have

$$|V(x,y)|=|\langle R(x),y \rangle|=|\langle T(x),S^{*} (y)\rangle | \leq \tilde{G}^2\, \|T\| \, \|S\| \, \|x\|_{\varphi}\,
\|y\|_{\psi} $$ $$\leq G \, \|V\| \, \|x\|_{\varphi}\, \|y\|_{\psi} $$ for all $(x,y)\in M \times N$.
\end{proof}

Since every bounded bilinear form on the cartesian product of two JB$^*$-triples admits a norm-preserving separately weak$^*$-continuous extension to the cartesian product of the corresponding bidual spaces (cf. \cite[Lemma 1]{peralta2001grothendieck}), Theorem \ref{t big Grothendieck inequ JBW} implies the following statement (restating of Theorem~\ref{tt big Grothendieck inequ} from Introduction).

\begin{thm}\label{t big Grothendieck inequ} Suppose $G >{ 8 (1+2\sqrt{3})}$. Let $E$ and $B$ be JB$^*$-triples. Then for every bounded 
bilinear form $V:E\times B\to \mathbb{C}$ there exist norm-one functionals $\varphi\in E^{*}$ and $\psi\in B^{*}$ satisfying
$$|V(x,y)| \leq G \ \|V\| \, \|x\|_{\varphi} \, \|y\|_{\psi}$$ for all $(x,y)\in E \times B$.$\hfill\Box$
\end{thm}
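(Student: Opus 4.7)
The plan is to derive Theorem~\ref{t big Grothendieck inequ} as an immediate corollary of Theorem~\ref{t big Grothendieck inequ JBW}, which has just been established, by passing to biduals. The reduction uses two well-known facts already recalled in the paper: first, the bidual $E^{**}$ of any JB$^*$-triple $E$ is a JBW$^*$-triple whose triple product extends that of $E$, and the canonical inclusion $E\hookrightarrow E^{**}$ is a triple homomorphism (cf.\ \cite[Corollary 11]{Dineen1986seconddual} or \cite[Theorem 1.4]{BaTi}); second, by \cite[Lemma 1]{peralta2001grothendieck}, every bounded bilinear form on the Cartesian product of two JB$^*$-triples admits a norm-preserving separately weak$^*$-continuous extension to the Cartesian product of their biduals.

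Concretely, given a bounded bilinear form $V:E\times B\to\mathbb{C}$, I would first use the cited extension lemma to produce a separately weak$^*$-continuous bilinear form $\widetilde{V}:E^{**}\times B^{**}\to\mathbb{C}$ with $\widetilde{V}|_{E\times B}=V$ and $\|\widetilde{V}\|=\|V\|$. Since $E^{**}$ and $B^{**}$ are JBW$^*$-triples, Theorem~\ref{t big Grothendieck inequ JBW} applied to $\widetilde{V}$ for the given constant $G>8(1+2\sqrt 3)$ supplies norm-one normal functionals $\varphi\in (E^{**})_*$ and $\psi\in (B^{**})_*$ with
$$|\widetilde{V}(a,b)|\le G\,\|\widetilde{V}\|\,\|a\|_\varphi\,\|b\|_\psi\quad\text{for all }(a,b)\in E^{**}\times B^{**}.$$
Under the canonical isometric identifications $(E^{**})_*=E^*$ and $(B^{**})_*=B^*$, the functionals $\varphi,\psi$ are interpreted as norm-one elements of $E^*$ and $B^*$, respectively.

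Restricting to $(x,y)\in E\times B$ and invoking $\widetilde{V}(x,y)=V(x,y)$ and $\|\widetilde{V}\|=\|V\|$ yields the desired bound, provided the seminorms match: one must check that for $\varphi\in E^*$ and $x\in E$, the intrinsic seminorm $\|x\|_\varphi$ defined in Section~1 coincides with the seminorm $\|x\|_\varphi$ obtained by viewing $\varphi\in (E^{**})_*$ and $x\in E\subset E^{**}$ and using the support tripotent $s(\varphi)\in E^{**}$. This is however the content of the original definition of $\|\cdot\|_\varphi$ from \cite{barton1987grothendieck}, which explicitly invokes the bidual (one selects $z\in E^{**}$ with $\varphi(z)=\|\varphi\|=\|z\|=1$ and sets $\|x\|_\varphi^2=\varphi\{x,x,z\}$), together with the fact that $E\hookrightarrow E^{**}$ is a triple homomorphism so the triple product $\{x,x,s(\varphi)\}$ is unambiguous.

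The only conceivable obstacle is this compatibility of seminorms under the bidual identification, but it is essentially tautological once the definition of $\|\cdot\|_\varphi$ for a functional on a (non-weak$^*$-closed) JB$^*$-triple is recalled. All of the genuine work has been done in Theorem~\ref{t big Grothendieck inequ JBW}; this final theorem is then just a tidy application of the bidual machinery and requires no new estimates or loss of constant.
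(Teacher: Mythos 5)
Your proposal is correct and is exactly the paper's argument: the theorem is deduced from Theorem~\ref{t big Grothendieck inequ JBW} by extending $V$ to a norm-preserving separately weak$^*$-continuous bilinear form on $E^{**}\times B^{**}$ via \cite[Lemma 1]{peralta2001grothendieck} and restricting back. Your additional remark on the compatibility of the seminorms under the identification $(E^{**})_*=E^*$ is a harmless elaboration of what the paper treats as immediate from the definition of $\|\cdot\|_\varphi$.
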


\begin{remark} 
The optimal values of the constants in question remain to be unknown. However, it seems that our method cannot give a better constant in Theorem~\ref{tt solution to LG problem JBW*-triples}. 
One factor $\sqrt2$ appears due to the use of Lemma~\ref{lemma little w* norm attaining} and a second factor $\sqrt{2}$ appears due to estimates of semi-norms $\norm{\cdot}_{\varphi_1,\varphi_2}$ by a semi-norm generated by one functional. Let us consider a JBW$^*$-triple represented as in Proposition~\ref{P:representation}. The individual summands have different behaviour.
\begin{enumerate}[$(i)$]
    \item The JBW$^*$-algebra $N$ is covered by the already known Theorem~\ref{t LG JB*algebras}.
    \item The summand $p_1V$ is covered by Corollary~\ref{c LG when Peirce-2 are upward directed}. This approach can be applied to $N$ as well (note that Corollary~\ref{c LG when Peirce-2 are upward directed} can be viewed as a generalization of Theorem~\ref{t LG JB*algebras}).
    \item The remaining summand, i.e.,
    \begin{equation*} \left( \bigoplus_{k\in\Lambda}^{\ell_{\infty}} L^\infty(\mu_k,C_k)\right)\oplus^{\ell_{\infty}} p_2V \oplus^{\ell_{\infty}} p_3V,
\end{equation*} 
    has a special property. It follows from our arguments that in this case $\norm{\cdot}_{\varphi_1,\varphi_2}$ attains its maximum on the unit ball for any two normal functionals $\varphi_1,\varphi_2$.
\end{enumerate}
This analysis confirms that there are two basic tools -- attaining the norm and some kind of order on tripotents.
\end{remark}

\begin{remark}
Recently, the constants in the Little Grothendieck Theorem for JB$^*$-algebras have been improved in \cite{kalenda2020optimality}.  In particular, by \cite[Theorem 6.3]{kalenda2020optimality} the constant $2$ in Theorem~\ref{t LG JB*algebras} may be replaced by $\sqrt{2}$. Further, if $E$ is a JB$^*$-algebra, then the constant $2$ in Theorem~\ref{t solution to LG problem JB*-triples}  may be replaced by $\sqrt{2}$ due to \cite[Theorem 1.3]{kalenda2020optimality}. 

However, it is not clear whether one can take in these cases even $K=\sqrt{2}$ or whether a similar improvement holds for general JB$^*$-triples. 
\end{remark}

\def\cprime{$'$} \def\cprime{$'$}

\end{document}